\documentclass[12pt, a4paper]{amsart}
\usepackage{amsfonts, amssymb, amsmath}
\usepackage{amsthm}
\usepackage[margin=2.5cm]{geometry}
\usepackage{enumerate}
\usepackage[all]{xy}

%\linespread{1.5} % linespacing

\newcommand{\N}{{\mathbb N}}

\newcommand{\Q}{{\mathbb Q}}

\newcommand{\Pp}{{\mathbb P}}
\newcommand{\A}{{\mathbb A}}
\newcommand{\Cc}{{\mathbb C}}
\newcommand{\Oo}{\mathcal{O}}

\newcommand{\se}[2]{\left\lbrace #1 \mbox{ }\vline\mbox{ } #2 \right\rbrace}
\newcommand{\ctop}[1]{\chi_{top}(#1)}

\newcommand{\tl}[1]{\tilde{#1}}

\newcommand{\sep}[2]{\left\lbrace\begin{matrix} #1 & \mbox{ if } k=1, \\ #2 & \mbox{ if } k>1. \end{matrix}\right.}

\newtheorem{thm}{Theorem}[section]
\newtheorem{pro}[thm]{Proposition}
\newtheorem{cor}[thm]{Corollary}
\newtheorem{lem}[thm]{Lemma}
\theoremstyle{definition}
\newtheorem{rk}[thm]{Remark}
\newtheorem{eg}[thm]{Example}
\newtheorem*{cov}{Convention}
\newtheorem*{defn}{Definition}
\newtheorem*{cl}{Claim}

\setlength{\textwidth}{6.5in}
\setlength{\textheight}{9in}

\begin{document}
\title{Betti numbers in three dimensional minimal model program}
\author{Hsin-Ku Chen}
\address{Department of Mathematics, National Taiwan University, No. 1, Sec. 4, Roosevelt Rd., Taipei 10617, Taiwan} 
\email{d02221002@ntu.edu.tw}
\date{}
\begin{abstract}
	Let $X$ be a smooth projective threefold. We prove that, among the process of minimal model program, the variance of the third Betti number can be bounded by some integer
	depends only on the Picard number of $X$.
\end{abstract}
\maketitle

\section{Introduction}
Minimal model program plays an important role in birational geometry. Given a smooth projective threefold, the minimal model program produces a finite sequence of birational maps,
including divisorial contractions and flips. The final object
is either a minimal model, that is, a projective threefold with at most terminal singularity and nef canonical divisor, or a Mori fiber space,
which is a fibration with relatively ample canonical divisor. It is a natural and interesting question to compare the
original variety and its minimal model. Thanks to the recent attempt of understanding three dimensional minimal model program, an explicit description of elementary birational
maps between threefolds is known. With these works, it is thus possible to compare invariants between biratioanl equivalent models of threefolds.\par
In this paper we compare the change of Betti numbers under the process of minimal model program. Betti numbers are important topological invariants of algebraic varieties
and it may be used to bound some geometrical invariants. For example, in \cite{CT}, Cascini and Tasin use Betti numbers to bound $K_Y^3-K_X^3$ where $Y\rightarrow X$ is a
step of minimal model program begin with a smooth threefold $X_0$. If $Y\rightarrow X$ is a divisorial contraction to point, then $K_Y^3-K_X^3$ can be bounded by $2^{10}b_2(X_0)$.
If $Y\rightarrow X$ is blow-up smooth curve, then $K_Y^3-K_X^3$ can be bounded by some constant depends only on $b_3(Y)$ and the cubic form of $Y$.\par
In fact we are motivated by a lemma in \cite{CT}.
\begin{lem}[\cite{CT}, Lemma 2.17]%\label{BettiD}
 	Let $Y\rightarrow X$ be an elementary divisorial contraction within $\Q$-factorial projective threefolds with terminal singularities.
 	Then $b_i(Y)=b_i(X)$ if $i=0,1,5,6$, and $b_i(Y)=b_i(X)+1$ if $i=2,4$.
\end{lem}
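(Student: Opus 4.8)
The plan is to compare the cohomology of $Y$ and $X$ through the exceptional locus, exploiting that $\phi$ is an isomorphism away from the contracted divisor. Write $E\subset Y$ for the irreducible exceptional divisor, $Z=\phi(E)\subset X$ for its image, and $g=\phi|_E\colon E\to Z$. Since $\phi$ restricts to an isomorphism $Y\setminus E\xrightarrow{\sim}X\setminus Z$, the square with corners $E,Y,Z,X$ is an abstract blow-up, and proper descent furnishes a long exact Mayer--Vietoris sequence of mixed Hodge structures
\begin{equation*}
\cdots\to H^k(X)\xrightarrow{(\phi^\ast,\ i_Z^\ast)} H^k(Y)\oplus H^k(Z)\xrightarrow{\ i_E^\ast-g^\ast\ } H^k(E)\xrightarrow{\ \delta\ } H^{k+1}(X)\to\cdots .
\end{equation*}
This reduces everything to the geometry of $E$ and $Z$, which the classification of terminal divisorial contractions controls: either $Z$ is a point and $E$ is an irreducible uniruled (hence rational) projective surface, or $Z$ is a curve and $g\colon E\to Z$ is generically a $\Pp^1$-bundle. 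In either case $E$ is uniruled with $h^{2,0}(E)=0$, so $H^2(E)$ is pure of type $(1,1)$, and $b_1(E)=b_3(E)=0$ when $Z$ is a point.

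I would first dispatch the easy degrees. For $k=0,6$ the statement is connectedness of $X,Y$ together with the existence of a fundamental class ($b_0=b_6=1$). For $k=1$ one uses that $H^1$ is a birational invariant of normal proper varieties (detected by global $1$-forms, which extend across the exceptional locus of a morphism of varieties with canonical singularities), whence $\phi^\ast\colon H^1(X)\to H^1(Y)$ is an isomorphism. For $k=5$ the key point is that $i_E^\ast\colon H^4(Y)\to H^4(E)\cong\Q$ is surjective: if $A$ is ample on $Y$ then $A^2|_E=(A|_E)^2>0$ in $H^4(E)$, since $A|_E$ is ample on the surface $E$. Surjectivity forces $\delta\colon H^4(E)\to H^5(X)$ to vanish, and as $H^5(Z)=0$ this gives $H^5(X)\cong H^5(Y)$. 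The same surjectivity, with $H^3(E)=0$ and $H^4(Z)=0$ in the point case, collapses the sequence to a short exact sequence $0\to H^4(X)\to H^4(Y)\to H^4(E)\cong\Q\to 0$, giving $b_4(Y)=b_4(X)+1$.

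The heart of the matter is $k=2$ (and dually $k=4$ in the curve case), where I would argue Hodge-theoretically. Since $b_1(E)=0$ (point case) the sequence already shows $\phi^\ast\colon H^2(X)\to H^2(Y)$ is injective; its cokernel $Q$ is a pure Hodge structure of weight $2$. On the $(2,0)$-part $\phi^\ast$ is the isomorphism $H^0(X,\Omega^2)\cong H^0(Y,\Omega^2)$ (birational invariance of $h^{2,0}$ under canonical singularities), so $Q^{2,0}=0$ and $\dim Q=h^{1,1}(Y)-h^{1,1}(X)$. Finally $Q=Q^{1,1}$ is a rational Hodge structure of type $(1,1)$, hence algebraic (Lefschetz $(1,1)$ on a resolution of $Y$), so it is carried by $\mathrm{NS}(Y)_\Q/\phi^\ast\mathrm{NS}(X)_\Q$. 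As the contraction is elementary, $\rho(Y/X)=1$, so this quotient is one–dimensional, spanned by $[E]$; and $[E]\notin\phi^\ast H^2(X)$ because it restricts to the nonzero class $N_{E/Y}$ on $E$ while pullbacks from $X$ restrict to $0$. Hence $\dim Q=1$, i.e.\ $b_2(Y)=b_2(X)+1$.

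I expect the main obstacle to be precisely the \emph{upper} bound $\dim Q\le 1$ above: ruling out extra transcendental $(1,1)$-classes in $H^2(Y)$, which is what pins the topological jump to the algebraic quantity $\rho(Y/X)=1$. This requires that $H^2$ of these (terminal, hence rational–singularity) threefolds be a pure weight-$2$ Hodge structure and that $\phi^\ast$ be a strict morphism of Hodge structures—properties I would extract from the decomposition theorem and the intersection-cohomology comparison in low degrees, using that terminal threefold singularities are isolated. A secondary difficulty is the curve case, where the terms $H^1(Z)$ and $H^3(E)$ enter the sequence and must be shown to feed only into $H^3(Y)$—consistent with the theorem making no claim about $b_3$—which I would control through the generic $\Pp^1$-bundle structure of $g$ to split off these odd-weight pieces.
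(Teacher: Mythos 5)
First, a point of order: the paper does not prove this statement at all --- it is quoted verbatim from \cite{CT} (Lemma 2.17) and used as a black box --- so there is no internal proof to compare yours against. Your strategy, the proper-descent long exact sequence for the abstract blow-up square combined with a weight and Lefschetz-$(1,1)$ analysis of the cokernel of $\phi^\ast$ on $H^2$, is a reasonable route to the result, and your treatment of degrees $0,1,5,6$ and the identification of the jump in $b_2$ with $\rho(Y/X)=1$ are in the right spirit. One of your stated worries is also unfounded: morphisms of mixed Hodge structures are automatically strict, so strictness of $\phi^\ast$ is not an obstacle.

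There are, however, concrete gaps. (1) ``Uniruled, hence rational'' is false for surfaces: a ruled surface over a curve of positive genus is uniruled with $b_1\neq 0$. What is true is that fibres of such contractions are rationally chain connected (Hacon--McKernan), and even granting that, $E$ need not be normal, so $b_1(E)=b_3(E)=0$ still requires an argument; the vanishing of $H^3(E)$ is exactly what you need for injectivity of $H^4(X)\to H^4(Y)$ in the point case, so this is not cosmetic. (2) The step ``$Q=Q^{1,1}$ is algebraic, hence lies in $\mathrm{NS}(Y)_\Q/\phi^\ast \mathrm{NS}(X)_\Q$'' invokes Lefschetz $(1,1)$ on a resolution $\pi:\tilde Y\to Y$ and then silently descends the resulting divisor class back to $Y$; making this rigorous uses $\Q$-factoriality of $Y$, injectivity of $\pi^\ast$ on $H^2$ (equivalently purity of $H^2(Y)$, which you flag but do not establish), and the fact that no nonzero class on $Y$ pulls back to a combination of $\pi$-exceptional divisors. (3) The curve case, where the lemma still asserts that $b_2$ and $b_4$ each jump by exactly $1$, is deferred to a closing remark even though $H^1(Z)$ and $H^3(E)$ genuinely enter the sequence there and $Z$ may have positive genus; this half of the statement is not a routine variant of the point case. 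As it stands the proposal is a credible outline rather than a proof; the cleanest repair is to first establish purity of $H^2$ and $H^4$ for $\Q$-factorial terminal threefolds (for instance via the comparison with intersection cohomology that the paper itself invokes in the proof of Theorem \ref{iBetti}) and only then run your weight argument.
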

Under divisorial contractions all the Betti numbers vary regularly except for $b_3$. A natural question is: how does $b_3$ change, and is there the same phenomenon for flips?
In this article we answer this question.
\begin{thm}\label{mthm}
	Let $X$ be a smooth threefold and $X=X_0\dashrightarrow X_1\dashrightarrow...\dashrightarrow X_m=X_{min}$ be a process of minimal model program. Then
	\begin{enumerate}[(i)]
	\item $b_i(X_j)=b_i(X)$ for $i=0,1,5,6$ and for all $j$.
	\item If $j>k$, then $b_i(X_j)\leq b_i(X_k)$ for $i=2,4$. Equality holds if and only if $X_j$ and $X_k$ are connected by flips.
	\item There exists an integer $\bar{\Phi}_{\rho(X)}$ depends only on the Picard number of $X$, such that $b_3(X_j)\leq\bar{\Phi}_{\rho(X)}+b_3(X)$ for all $j$.
	\end{enumerate}
\end{thm}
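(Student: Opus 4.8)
The plan is to reduce the whole statement to the analysis of a single elementary step $X_k\dashrightarrow X_{k+1}$ --- a divisorial contraction or a flip --- and then to sum the resulting local estimates over the finitely many steps. For a divisorial contraction the Cascini--Tasin lemma (\cite{CT}, Lemma 2.17) already records the behaviour of $b_i$ for all $i\neq 3$, so the genuinely new input is the flip case, plus the behaviour of $b_3$ throughout. Since each $X_j$ is irreducible projective of complex dimension $3$, one has $b_0(X_j)=b_6(X_j)=1$ at once. For a flip I would compare everything through the common contraction $X_k\xrightarrow{f}Z\xleftarrow{f^+}X_{k+1}$, which is an isomorphism over $U:=Z\setminus\{p\}$, using the two localization sequences $\cdots\to H^i_{F}(X_k)\to H^i(X_k)\to H^i(U)\to\cdots$ and its analogue for the flipped locus $F^+\subset X_{k+1}$.

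For (i) and (ii) in the flip case, the key point is that the flipping and flipped loci are configurations of rational curves (Mori's classification of three-dimensional terminal flips); a local computation at the terminal singularities carried by $F$ and $F^+$ shows that their contributions to $H^i$ agree for every $i\neq 3$, so $H^i(X_k)\cong H^i(U)\cong H^i(X_{k+1})$ there, while the only possible discrepancy is concentrated in the middle degree $3$ (this is exactly what (iii) must handle). Thus a flip preserves $b_i$ for $i\neq3$; combined with the lemma, this gives (i), and gives the $b_2,b_4$ part of (ii): a divisorial contraction $X_k\to X_{k+1}$ drops $b_2=b_4$ by exactly $1$, a flip preserves it, so along $X_k\dashrightarrow\cdots\dashrightarrow X_j$ one has $b_2(X_j)\le b_2(X_k)$ with equality precisely when no divisorial contraction occurs, i.e. when the models are joined by flips. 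Note also that the duality symmetries $b_0=b_6$, $b_1=b_5$, $b_2=b_4$ hold on the smooth $X$ and are propagated verbatim by (i)--(ii), so I may use them freely below.

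The heart is (iii). Using the symmetries, $\ctop{X_j}=2-2b_1(X_j)+2b_2(X_j)-b_3(X_j)$, so writing $d_j$ for the number of divisorial contractions among the first $j$ steps,
\[
b_3(X_j)=b_3(X)-2d_j+\big(\ctop{X}-\ctop{X_j}\big).
\]
As $d_j\ge0$ it suffices to bound $\ctop{X}-\ctop{X_j}$ from above by a constant depending only on $\rho(X)$. I would estimate the increment of $\ctop{\cdot}$ at each step by the scissor relation: a divisorial contraction with exceptional divisor $E$ and centre $B$ contributes $\ctop{E}-\ctop{B}$, a quantity bounded above by a universal constant (for a centre that is a point this is controlled by the classification of the exceptional surface; for a centre that is a curve the contribution is $\le 0$). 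Since each divisorial contraction drops the Picard number by one and flips preserve it, one has $d_j\le\rho(X)-1$, so the divisorial steps contribute at most $C\,\rho(X)$ to the deficit.

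The main obstacle is the flip contribution $\sum_{\text{flips}}\big(\ctop{F}-\ctop{F^+}\big)$, for which a priori there is no bound on the number of flips in terms of $\rho(X)$. I would control it in two stages. Each individual term is bounded because $F$ and $F^+$ are trees of rational curves whose number of components is bounded by the local invariants of the terminal singularities lying on them, so $\ctop{F}-\ctop{F^+}$ is bounded by a constant. To bound the number of flips I would invoke a Shokurov-type difficulty function $d(\cdot)$, which is finite, non-negative, vanishes on the smooth $X$, strictly decreases at every flip, and increases by only a bounded amount at each of the at most $\rho(X)-1$ divisorial contractions (the jump being controlled by the bounded singularity types produced); hence the total number of flips is at most a constant depending only on $\rho(X)$. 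Multiplying the per-flip bound by this count and adding the divisorial contribution yields the desired $\bar{\Phi}_{\rho(X)}$. I expect the delicate points to be the uniform per-step estimates extracted from the explicit classifications of three-dimensional divisorial contractions and terminal flips (in particular the local cohomology computation at the singular points of $F$ and $F^+$), and the verification that the difficulty increases by a bounded amount at a divisorial contraction independently of where it occurs in the program.
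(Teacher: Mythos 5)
Your overall architecture (per-step estimates, telescoping $\chi_{top}$, bounding the number of steps by a decreasing difficulty-type function) is broadly parallel to the paper's, which uses the depth $dep(\cdot)$ as the decreasing invariant together with the bounds $dep(X_j)\le\rho(X)$ and $m\le 2\rho(X)$ from \cite{CZ}. But there is a genuine gap at the central quantitative step. You assert that for a divisorial contraction to a point the increment $\ctop{E}-\ctop{B}$ is ``bounded above by a universal constant \dots controlled by the classification of the exceptional surface.'' This is false: the paper's Section 5 exhibits explicit families (a $cA/m$ point with $E\cong(xy+z^{mk}+u^k=0)\subset\Pp(a,b,1,m)$, and a $cAx/4$ point) for which $\ctop{E}$ tends to $+\infty$, respectively $-\infty$, as the local invariant $k$ grows. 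There is no universal bound; the correct statement is that $|\ctop{E}|$ is bounded by a constant depending on the depth, and proving this occupies most of the paper: one needs Theorem \ref{thm} (a bound on $|\ctop{\cdot}|$ for loci in weighted projective spaces cut out by boundedly many equations of bounded weight, itself proved by an intricate induction via generalized resultants), plus a case-by-case traversal of the Hayakawa--Kawakita--Yamamoto classification to check that the relevant blow-up weights are bounded in terms of $dep$. The Picard number enters only because $dep(X_j)\le\rho(X)$ along the MMP. Your proposal treats precisely this point as if it followed from ``the classification,'' which is where the actual work lies.

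A second, smaller but still real, gap concerns flips: both your claim that $b_i$ is preserved for $i\neq3$ and your per-flip bound on $\ctop{F}-\ctop{F^+}$ rest on an unperformed ``local computation at the terminal singularities'' and on the flipped locus being a tree of rational curves with a universally bounded number of components---again that number is controlled only by the singularity data, not universally. The paper computes nothing at a flip directly: it invokes the Chen--Hacon factorization (Theorem \ref{ch}) to write a flip as a $w$-morphism, a chain of flips and flops of strictly smaller depth, and a divisorial contraction, and then inducts on depth, reducing everything to the divisorial case already handled. The same factorization is also needed for divisorial contractions to curves over non-Gorenstein points, which are not blow-ups of LCI curves and which your proposal does not address.
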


We first deal with the divisorial contraction case. Since all other Betti numbers are known, to look at the variance of $b_3$ is equivalence
to look at the variance of the topological Euler characteristic and, to find the change of topological Euler characteristic is equivalent to find the
topological Euler characteristic of the exceptional
divisor. Thanks for the classification of extremal divisorial contractions to points due to Hayakawa, Kawakita and Yamamoto, cf. \cite{Hay1}, \cite{Hay2}, \cite{Hay3}, \cite{Hay4}, \cite{Hay5},
\cite{Kaw1}, \cite{Kaw2}, \cite{Kaw3} and \cite{Yam}, every extremal divisorial
contraction to point can be viewed as a weighted blow-up of LCI locus in a cyclic quotient of $\A^4$ or $\A^5$ and hence the exceptional divisor will be a LCI locus in some weighted projective
spaces. So the first step to solve our problem is to estimate the topological Euler characteristic of varieties in weighted projective spaces.\par
The main technical ingredient of our work is the following.
\begin{thm}\label{thm} Fix three positive integers $n$, $k$ and $d$. 
\begin{enumerate}[(i)]
\item There is an integer $N^n_{d,k}$ such that for any algebraic set
	$X_I\subset\A^n$ defined by an ideal $I=(f_1,...,f_k)$ with $\deg f_i\leq d$ for all $i$,
	we have $|\ctop{X_I}|\leq N^n_{d,k}$.
\item There is an integer $M^n_{d,k}$ such that for any zero locus
	$Y_I\subset\Pp(a_0,...,a_n)$ defined by an weighted homogeneous ideal $I=(f_1,...,f_k)$
	with $wt(f_i)\leq d$ for all $i$, we have
	$|\ctop{Y_I}|\leq M^n_{d,k}$, for arbitrary integer $a_i$, $i=0,...,n$.
\end{enumerate}
\end{thm}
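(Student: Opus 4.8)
The plan is to prove (i) directly by a parameter–space argument and then to deduce (ii) from (i) by a stratification of weighted projective space combined with the behaviour of $\ctop{\cdot}$ under finite quotients.

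\emph{Part (i).} First I would record that the data of an ideal $I=(f_1,\dots,f_k)$ with $\deg f_i\le d$ is a point of a \emph{fixed} affine space: writing each $f_i$ as a combination of the $\binom{n+d}{n}$ monomials of degree $\le d$, the coefficients form a point $c\in\A^M$ with $M=k\binom{n+d}{n}$, and $\A^M$ depends only on $n,d,k$. I would then form the universal incidence variety
\[
\mathcal{X}=\{(x,c)\in\A^n\times\A^M : f_{1,c}(x)=\dots=f_{k,c}(x)=0\},
\]
with projection $\pi:\mathcal{X}\to\A^M$ whose fibre over $c$ is exactly $X_{I_c}$; every algebraic set of the type considered occurs as such a fibre. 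Now I would invoke the constructibility of the Euler characteristic in an algebraic family: the function $c\mapsto\ctop{\pi^{-1}(c)}$ is constructible on $\A^M$ (for instance by stratifying $\A^M$ so that $\pi$ is topologically locally trivial over each stratum, via generic smoothness and a Verdier/Thom--Mather argument, and inducting on dimension). A constructible function takes only finitely many values, so $\sup_c|\ctop{\pi^{-1}(c)}|<\infty$, and this supremum serves as $N^n_{d,k}$. Alternatively one may quote Milnor-type bounds on the total Betti number of a complex affine set defined by $k$ equations of degree $\le d$, since $|\ctop{X_I}|\le\sum_i b_i(X_I)$.

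\emph{Part (ii).} Here the difficulty is that the ambient space $\Pp(a_0,\dots,a_n)$ itself varies with the weights, so (i) cannot be applied to $Y_I$ directly; the point is to remove the weights before using (i). I would stratify by coordinate support: for $S\subseteq\{0,\dots,n\}$ put $\Pp_S=\{[x]:x_i\ne0\iff i\in S\}$, so that $\Pp(a_0,\dots,a_n)=\bigsqcup_S\Pp_S$ and, by additivity of $\ctop{\cdot}$ over a finite stratification into locally closed pieces, $\ctop{Y_I}=\sum_S\ctop{Y_I\cap\Pp_S}$. It then suffices to bound each $\ctop{Y_I\cap\Pp_S}$ by a constant depending only on $n,d,k$ and to sum over the $2^{n+1}$ subsets $S$.

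Fixing $S$ and an index $i_0\in S$, I would use the $\Cc^*$-action to normalise $x_{i_0}=1$; the residual symmetry of this slice is the finite group $\mu_{a_{i_0}}$ acting by $\lambda\cdot(x_i)=(\lambda^{a_i}x_i)$, giving
\[
Y_I\cap\Pp_S\;\cong\;W/\mu_{a_{i_0}},\qquad W=\{x\in(\Cc^*)^{S\setminus\{i_0\}}:\bar f_l(1,x)=0,\ l=1,\dots,k\},
\]
where $\bar f_l$ is the restriction of $f_l$ to $\{x_j=0:j\notin S\}$. Since the weights are $\ge 1$, setting $x_{i_0}=1$ turns each weighted homogeneous $f_l$ of weight $\le d$ into an ordinary polynomial of degree $\le d$, so $W$ is a bounded-degree affine set intersected with a torus; writing $(\Cc^*)^{S\setminus\{i_0\}}$ as $\A^{|S|-1}$ minus the coordinate hyperplanes and expanding by inclusion--exclusion expresses $\ctop{W}$ as an alternating sum of Euler characteristics of affine sets each defined by at most $k+n$ equations of degree $\le d$. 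Part (i) then bounds $|\ctop{W}|\le 2^{n}N^{n}_{d,\,k+n}$, uniformly in the $a_i$.

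It remains to pass to the quotient without losing control, and this is the step I expect to be the main obstacle: a priori the finite quotient by $\mu_{a_{i_0}}$ could inflate the Euler characteristic as $a_{i_0}\to\infty$. Here I would use $\ctop{W/G}=\tfrac1{|G|}\sum_{g\in G}\ctop{W^g}$ for a finite group $G$. For $g=\lambda\in\mu_{a_{i_0}}$, a point of $W\subset(\Cc^*)^{S\setminus\{i_0\}}$ is fixed only if $\lambda^{a_i}=1$ for every $i\in S\setminus\{i_0\}$, so $W^\lambda=W$ when $\lambda\in\mu_{g}$ with $g=\gcd(a_i:i\in S)$ and $W^\lambda=\emptyset$ otherwise; hence
\[
\ctop{Y_I\cap\Pp_S}=\frac{g}{a_{i_0}}\,\ctop{W},\qquad \frac{g}{a_{i_0}}\le 1 .
\]
This cancellation — which forces the factor $g/a_{i_0}\le1$ — is exactly what guarantees weight-independence, giving $|\ctop{Y_I\cap\Pp_S}|\le|\ctop{W}|\le 2^{n}N^n_{d,k+n}$ for every choice of weights. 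Summing over $S$ yields an admissible $M^n_{d,k}$.
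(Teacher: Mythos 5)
Your argument is correct, but it follows a genuinely different route from the paper in both halves. For part (i) the paper gives a self-contained induction on the ambient dimension: it projects $\A^n\to\A^{n-1}$, encodes the fibre cardinality in the nullity of a generalized resultant matrix $T_{f_1,\dots,f_k}$, and runs a layered induction over the loci where that nullity jumps, where leading coefficients vanish, and where degrees degenerate. You instead put all ideals with bounded-degree generators into one finite-dimensional parameter space and quote constructibility of $c\mapsto\ctop{\pi^{-1}(c)}$ (or, for an effective constant, the Milnor--Thom bound on total Betti numbers applied to the underlying real algebraic set); this is much shorter and conceptually cleaner, at the price of relying on external stratification/triviality theorems, whereas the paper's resultant machinery is elementary and recursive. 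For part (ii) both proofs face the same essential difficulty --- uniformity in the weights $a_i$ --- and both resolve it by passing to a cyclic cover where the weights disappear. The paper works chart by chart ($\{x_0=1\}$ plus the locus at infinity), pulls back along $\A^n\to\A^n/\frac{1}{a_0}(a_1,\dots,a_n)$, and must separately bound the ramification locus upstairs and its image downstairs before applying a branched-covering formula. You stratify by coordinate support so that on each torus stratum the residual $\mu_{a_{i_0}}$-action has constant stabilizer $\mu_g$ with $g=\gcd(a_i:i\in S)$, and the fixed-point formula $\ctop{W/G}=\frac{1}{|G|}\sum_g\ctop{W^g}$ yields the exact factor $g/a_{i_0}\le1$ with no ramification bookkeeping; the inclusion--exclusion over coordinate hyperplanes then reduces cleanly to part (i). Your version of (ii) is arguably tighter than the paper's; just make sure to note explicitly that $W$ is $\mu_{a_{i_0}}$-invariant (it is, since the $f_l$ are weighted homogeneous) and that additivity of $\ctop{\cdot}$ over locally closed strata is being used in the sense valid for complex algebraic varieties, which is also the convention the paper adopts.
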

With this theorem, one could estimate the change of $b_3$ after divisorial contractions to point. To go further, we use the factorization in \cite{CH}, which
factorizes any extremal birational maps into composition of divisorial contractions to point, blow-up LCI curves, flops and the inverse of maps above. The Betti numbers won't
change after flop, and the change of $b_3$ after blowing-up LCI curve can be easily computed. So the problem could be solved.

As an application of Theorem \ref{mthm}, we try to bound the intersection Betti numbers. Intersection homology was developed by Mark Goresky and Robert MacPherson in 1970's,
which is defined on singular manifolds and satisfied some nice properties as original singular homology on smooth manifolds.
One may expect that the difference of original Betti number and the intersection Betti number can be controlled by the singularity.
In this paper we prove a weaker statement. We will denote by $IH^i(X,\Q)$ the middle-perversity intersection cohomology group and let $Ib_i(X)$ be the dimension of $IH^i(X,\Q)$.
\begin{thm}\label{iBetti}
	Let $X$ be a projective $\Q$-factorial terminal threefold over $\Cc$. Then there is an integer $\Theta_i$ depends only on the singularity of $X$ and the Picard number $\rho(X)$,
	such that \[Ib_i(X)\leq b_i(X)+\Theta_i.\]
\end{thm}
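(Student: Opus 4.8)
The plan is to bound each $Ib_i(X)$ by the ordinary Betti number of a resolution and then to transport the comparison down to $X$ along the minimal model program. Choose a resolution of singularities $\pi\colon\tilde{X}\to X$. The essential homological input is the decomposition theorem of Beilinson--Bernstein--Deligne--Gabber: since $\pi$ is proper and birational, $IC_X$ occurs as a direct summand of $R\pi_*\Q_{\tilde{X}}[\dim X]$, so each $IH^i(X,\Q)$ is a direct summand of $H^i(\tilde{X},\Q)$ and therefore
\[
Ib_i(X)\leq b_i(\tilde{X}).
\]
It thus suffices to exhibit a resolution for which $b_i(\tilde{X})\leq b_i(X)+\Theta_i$ with $\Theta_i$ of the asserted shape.

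Since $X$ is $\Q$-factorial and terminal, running the relative $K_{\tilde{X}}$-MMP over $X$ contracts precisely the $\pi$-exceptional divisors and terminates with $X$ itself (a small birational morphism to a $\Q$-factorial variety is an isomorphism), producing a factorization
\[
\tilde{X}=Y_0\dashrightarrow Y_1\dashrightarrow\cdots\dashrightarrow Y_N=X
\]
of divisorial contractions and flips starting from the smooth threefold $\tilde{X}$. For $i=0,1,5,6$ Theorem \ref{mthm}(i) gives $b_i(\tilde{X})=b_i(X)$, so $\Theta_i=0$. For $i=2,4$ the lemma of Cascini--Tasin quoted above (\cite{CT}, Lemma 2.17) accounts for a difference of exactly one in $b_i$ at each divisorial contraction, while flips leave $b_i$ unchanged; hence $b_i(\tilde{X})=b_i(X)+\rho(\tilde{X}/X)$, where the relative Picard number $\rho(\tilde{X}/X)$ equals the number of exceptional prime divisors of $\pi$. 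For a resolution chosen from the classification of terminal singularities this number depends only on the analytic types of the singular points, and we may take $\Theta_2=\Theta_4=\rho(\tilde{X}/X)$.

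The decisive case is $i=3$, and here Theorem \ref{mthm} enters directly. Along the factorization above, $b_3$ is the one Betti number that fluctuates irregularly, and Theorem \ref{mthm}(iii), applied to the program issuing from the smooth threefold $\tilde{X}$, bounds this fluctuation by the constant $\bar{\Phi}_{\rho(\tilde{X})}$ depending only on the Picard number of the starting model. Combined with $\rho(\tilde{X})=\rho(X)+\rho(\tilde{X}/X)$, this lets us take $\Theta_3=\bar{\Phi}_{\rho(\tilde{X})}$, an integer determined by $\rho(X)$ together with the number of exceptional divisors, hence by $\rho(X)$ and the singularities of $X$; the decomposition theorem then yields $Ib_3(X)\leq b_3(\tilde{X})\leq b_3(X)+\Theta_3$.

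The main obstacle is precisely the $b_3$ estimate. Theorem \ref{mthm}(iii) is phrased as an upper bound for $b_3$ of the \emph{later} models in terms of the smooth \emph{starting} model, whereas the application requires the reverse inequality, controlling $b_3(\tilde{X})$ of the smooth starting model by $b_3(X)$ of the terminal end model; what is actually needed is the two-sided estimate on the total variation of $b_3$ along the program, which the mechanism behind Theorem \ref{mthm}---the step-by-step bound on the topological Euler characteristic supplied by Theorem \ref{thm}, together with the bound $\rho(\tilde{X})$ on the number of divisorial contractions---does supply. Equivalently, the inequality in the required direction can be read off from the long exact sequence of the abstract blow-up square attached to $\pi$, which gives $b_3(\tilde{X})\leq b_3(X)+b_3(E)$ for the exceptional locus $E$ of $\pi$, after which $b_3(E)$ is bounded in terms of the singularity types through Theorem \ref{thm}. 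Verifying that a resolution can be chosen so that $\rho(\tilde{X}/X)$ and these Euler characteristics depend only on $\rho(X)$ and the singularity data---which rests on the explicit classification of terminal divisorial extractions used throughout the paper---is the remaining point; by contrast the degrees $i\neq 3$ are routine.
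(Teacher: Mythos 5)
Your overall strategy --- compare $Ib_i(X)$ with the ordinary Betti numbers of a smooth model lying over $X$, where the two kinds of Betti numbers coincide, and then control the variation of $b_i$ down the tower --- is the same as the paper's. One ingredient differs harmlessly: you obtain $Ib_i(X)\leq b_i(\tilde X)$ from the decomposition theorem, whereas the paper obtains it from the exact sequence of \cite{CT} Lemma 2.16, which shows that the intersection Betti numbers only increase as one goes up the chosen tower of divisorial contractions. Both routes are valid, and yours is the more general one.

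The gap is exactly where you flag it, and neither of your proposed repairs closes it. You factor $\tilde X\dashrightarrow X$ by a relative MMP, which in general contains flips, and you then assert that ``the mechanism behind Theorem \ref{mthm}'' supplies the two-sided estimate on $b_3$. It does not: for a flip the paper only proves the forward inequality $b_3(X')\leq\Phi_{dep(X)}+b_3(X)$ (Proposition \ref{b3}), while you need the reverse direction, from the terminal end back up to the smooth start. Only for divisorial contractions to points is the estimate genuinely two-sided, via $|\ctop{Y}-\ctop{X}|\leq D'_{dep(Y)}$ (Proposition \ref{divX}). The missing idea is to choose the smooth model so that no flips occur at all: the paper invokes the feasible resolution of \cite{C}, a chain $Y=X_n\rightarrow\cdots\rightarrow X_0=X$ consisting purely of extremal divisorial contractions to points, for which $|b_3(X_{j+1})-b_3(X_j)|\leq D'_{dep(X_{j+1})}+2$ at every step, the length $n=\rho(Y/X)$ is determined by the singularities, and $dep(X_j)\leq\rho(Y)=\rho(X)+\rho(Y/X)$ by Remark \ref{deprho}. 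Your fallback via the abstract blow-up square, $b_3(\tilde X)\leq b_3(X)+b_3(E)$, is a plausible alternative but is not supported by the paper's tools: Theorem \ref{thm} bounds only the topological Euler characteristic of the relevant loci, not their individual Betti numbers, so bounding $b_3(E)$ in terms of the singularity data would require an argument you have not given.
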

The idea is to compare the both two kinds of Betti numbers with a smooth model. On smooth model the two Betti numbers coincide. Thanks to the main theorem of \cite{C},
there is a smooth variety $Y$ such that $Y\rightarrow X$ is a composition of extremal divisorial contractions, so Theorem \ref{mthm} applies. As stated in \cite{CT}
the intersection Betti numbers always decrease under divisorial contractions, hence there is no difficulty to derive Theorem \ref{iBetti}.

There are some question remained. In the prove of Theorem \ref{iBetti} the Picard number plays an essential role. However in the topological viewpoint
it seems no reason that this term should appear. Can one find another bound which is only depends on the singularities?
The other problem is that the converse of the inequality are much interesting. Can one prove that $b_i(X)\leq Ib_i(X)+\Theta'_i$ for some $\Theta'_i$ depends only on singularities?
Since intersection Betti numbers plays the same role on singular varieties as original Betti numbers on smooth varieties, one may regard the intersection Betti number as an elementary
quantum on singular varieties. Then, use this quantum to bound the original Betti number seems more natural.

This article is structured as follows.
In Section \ref{spre} we will quickly review some facts about terminal threefolds, involving the factorization of \cite{CH}, the discussion of invariants in terminal
threefolds and the computation of Betti numbers for some easy cases. We will prove Theorem \ref{thm} in Section \ref{stop} and Theorem \ref{mthm} in Section \ref{sBetti}.
The last section consists some examples and the proof of Theorem \ref{iBetti}.

I want to thank my advisor, Jungkai Alfred Chen. With whom the original idea of the proof were illuminated, and without his advise I can not finish this article.
Also I would like to thank Paolo Casnini for his kindly and helpful comments, particularly about the application on intersection Betti numbers. 

\section{Preliminaries}\label{spre}
\subsection{Geometry of terminal threefolds}
We define several invariants of terminal threefold, which is useful when studying threefold geometry.
\begin{defn}
	Let $X$ be a terminal threefold. A \emph{$w$-morphism} is a extremal divisorial contraction which contract exceptional divisor to a point of index $r>1$, such that
	the discrepancy of the exceptional divisor is $1/r$.\par
	The depth of $X$, denoted by $dep(X)$, is the minimal length of sequence of $w$-morphisms $X_n\rightarrow X_{n-1}\rightarrow ...\rightarrow X_1\rightarrow X$,
	such that $X_n$ is Gorenstein.
	Note that by \cite{Hay2} Theorem 1.2, for any terminal threefold $X$, $dep(X)$ exists and is finite.
\end{defn}
\begin{pro}[\cite{CH}, Proposition 2.15]\label{depdiv}
	If $f:Y\supset E\rightarrow X\ni P$ be (the germ of) a divisorial contraction to a point. Then $dep(Y)\geq dep(X)-1$.
\end{pro}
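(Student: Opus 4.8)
The plan is to recast the desired bound $dep(Y)\ge dep(X)-1$ as the equivalent inequality $dep(X)\le dep(Y)+1$, and to prove the latter by exhibiting a single chain of $w$-morphisms from a Gorenstein threefold down to $X$ whose length is at most $dep(Y)+1$. Since $dep(X)$ is by definition the minimum over the lengths of all such chains, producing one chain of the correct length is enough. As the statement concerns only the germ $X\ni P$, I would first dispose of the case where $P$ is a Gorenstein (index one) point: there $dep(X)=0$ and the inequality is automatic because depths are non-negative. Hence I may assume $P$ has index $r>1$, so that $dep(X)\ge 1$.

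Now fix $n=dep(Y)$ together with a minimal tower of $w$-morphisms $Y_n\to Y_{n-1}\to\cdots\to Y_1\to Y_0=Y$ with $Y_n$ Gorenstein. The clean case is the one in which the given contraction $f$ is itself a $w$-morphism, i.e. the discrepancy of $E$ over $P$ equals $1/r$: then the composite $Y_n\to\cdots\to Y_0=Y\xrightarrow{f}X$ is a chain of $n+1$ $w$-morphisms issuing from the Gorenstein threefold $Y_n$, so $dep(X)\le n+1=dep(Y)+1$ and we are done. This already explains the shape of the bound and the appearance of the constant $1$: a single $w$-morphism lowers the depth by exactly one step of the tower, and appending such an $f$ costs exactly one step.

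The substance lies in the remaining case, where $f$ contracts $E$ to a point of index $r>1$ but is \emph{not} a $w$-morphism, so that the discrepancy of $E$ is $a/r$ with $a\ge 2$. Here $f$ cannot legally be appended to the tower of $Y$, and the problem becomes to convert the data of the minimal tower of $Y$ together with the single non-$w$ contraction $f$ into a tower of $w$-morphisms for $X$ of length at most $n+1$. The natural device is to bring in a genuine $w$-morphism $h:X_1\to X$ over $P$, whose existence is guaranteed by \cite{Hay2}, and to compare the two extractions $Y$ and $X_1$ of the same germ $X\ni P$ by passing to a common resolution and running a relative minimal model program over $X$. The delicate point is that $Y$ and $X_1$ extract valuations of different discrepancies, so they are related not by a crepant (flop-type) modification but by a genuinely asymmetric one; in particular a naive ``more resolved implies smaller depth'' heuristic does not by itself settle the direction of the comparison.

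I expect this comparison to be the main obstacle, and I would control it using the explicit classification of divisorial contractions to points due to Hayakawa, Kawakita and Yamamoto (\cite{Hay2}, \cite{Kaw1}), which realises every such $f$ as an explicit weighted blow-up and thereby describes precisely the singularities appearing on $Y$. With this description in hand one can track how the non-Gorenstein locus of $X$ is redistributed on $Y$ after the single extraction, and use it either to re-factor $f$ efficiently or to make the comparison with $h$ quantitative. The heart of the matter is thus a verification, guided case-by-case by the classification, that no single divisorial contraction to a point can shorten the minimal $w$-tower by more than one step; the appending argument of the clean case is the prototype that this verification must reproduce, and I anticipate that carrying it out in the non-$w$ case is where essentially all the work resides.
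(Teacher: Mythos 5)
Your reduction to $dep(X)\le dep(Y)+1$, the disposal of the Gorenstein germ, and the ``clean case'' where $f$ is itself a $w$-morphism (append $f$ to a minimal tower of $w$-morphisms over $Y$ to obtain one over $X$ of length $dep(Y)+1$) are all correct. But the content of the proposition is precisely the remaining case $a(E,X)>1/r$, and there your text contains no argument: you name two candidate devices --- a comparison of $Y$ with a $w$-morphism $X_1\to X$ via a common resolution and a relative MMP, or a case-by-case verification against the Hayakawa--Kawakita--Yamamoto classification --- and then state that essentially all the work resides in carrying one of them out. That work is the proposition. Neither device is made to function: for the MMP comparison you do not identify which quantity is monotone along the links connecting $Y$ and $X_1$ over $X$, nor in which direction (and you yourself note that the naive ``more resolved implies smaller depth'' heuristic does not settle this); for the classification route you would need to read off $dep(Y)$, i.e.\ exhibit a $w$-morphism tower resolving $Y$, from each family in the lists, which the classification does not directly provide, and which in any case covers far more territory than the statement requires. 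So as written the proposal establishes only the easy half of the inequality.

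For calibration: the paper itself offers no proof of this statement --- it is quoted verbatim from \cite{CH}, Proposition 2.15 --- so there is nothing internal to compare against. The argument in \cite{CH} does not attempt to build a tower for $X$ out of one for $Y$ (impossible in the non-$w$ case, since $f$ cannot serve as a link of such a tower); it instead proves the equivalent lower bound $dep(Y)\ge dep(Z)$ for a $w$-morphism $Z\to X$ over $P$ (which satisfies $dep(Z)\ge dep(X)-1$ automatically, by the appending argument you already have), by extracting the minimal-discrepancy valuation over $Y$ and running a two-ray game over $X$ with explicit control of how depth changes under each divisorial contraction, flip and flop that occurs. Supplying that monotonicity analysis is exactly the missing step in your proposal.
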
 
\begin{pro}[\cite{CH}, Proposition 3.8]\label{depflip}
	Let $X\rightarrow W$ be a flipping contraction and $X\dashrightarrow X'$ be the flip, then $dep(X)>dep(X')$.
\end{pro}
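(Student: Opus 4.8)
The plan is to compare $dep(X)$ and $dep(X')$ through the discrepancies of divisors over the common base $W$. Since $X\dashrightarrow X'$ is an isomorphism in codimension one, the two threefolds carry the same prime divisors and the same collection of exceptional divisors, so every valuation $E$ has a well-defined discrepancy on each side; applying the negativity lemma to a common resolution of the flip gives $a(E,X)\le a(E,X')$ for all $E$, with strict inequality whenever the center of $E$ on $X$ meets the flipping curve. First I would record the elementary but crucial dictionary between depth and discrepancies: a terminal threefold is Gorenstein exactly when every exceptional discrepancy is an integer $\ge 1$, whereas a single $w$-morphism $Y\to X$ extracts precisely one divisor, of discrepancy $1/r<1$, and conversely satisfies $dep(X)\le dep(Y)+1$ (the inequality underlying Proposition \ref{depdiv}). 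Thus a $w$-sequence realizing $dep(X)$ can be viewed as a minimal recipe for clearing out the divisors of discrepancy $<1$.

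The main step is to convert the inequality $a(E,X)\le a(E,X')$ into the length comparison $dep(X')\le dep(X)-1$, which is equivalent to the assertion $dep(X)>dep(X')$. Because discrepancies only increase across the flip, every divisor with $a(E,X')<1$ already satisfies $a(E,X)<1$, so the divisors that an optimal $w$-sequence must still clear over $X'$ form a subset of those that must be cleared over $X$. The target is then to show that the strict increase along the flipping locus genuinely shrinks this set: I would aim to exhibit a divisor $E^{\ast}$ centered on the flipping curve which is extracted by a $w$-morphism over $X$ but whose discrepancy over $X'$ has risen enough that it no longer needs to be extracted. Transporting the remaining extractions across the codimension-one isomorphism and bookkeeping with $dep(X)\le dep(Y)+1$, one would assemble from the optimal $X$-sequence a $w$-sequence that Gorensteinizes $X'$ using one fewer step.

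The hard part will be the non-monotonicity of depth: Proposition \ref{depdiv} is only a one-sided estimate, so a naive transport of a $w$-sequence from $X$ to $X'$ does not automatically control its length, and a $w$-morphism can lower the discrepancies of divisors other than the one it extracts. Making the \emph{one fewer step} precise therefore requires pinning down exactly which low-discrepancy divisor is resolved by the flip and checking that clearing it does not produce new divisors of discrepancy $<1$ over $X'$. This is the depth analogue of Shokurov's theorem that the difficulty strictly drops under a flip, and I expect the decisive input to be the finiteness and explicit control of the set of divisors with discrepancy in $(0,1)$ over a terminal threefold, together with the classification of the divisors extracted by $w$-morphisms.
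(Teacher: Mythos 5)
This proposition is not proved in the paper at all: it is quoted verbatim from \cite{CH} (Proposition 3.8), where the argument runs by induction on $dep(X)$ through the factorization diagram reproduced here as Theorem \ref{ch} — one extracts a $w$-morphism $Y\rightarrow X$ with $dep(Y)=dep(X)-1$, runs a relative MMP over $W$ to reach $Y'$ with a divisorial contraction $Y'\rightarrow X'$, and combines $dep(X')\leq dep(Y')+1$ with the inductive control of $dep(Y')$. Your proposal takes a genuinely different route, via monotonicity of discrepancies under the flip, but as written it contains a gap that is not a technicality: the entire argument rests on a dictionary between $dep(X)$ and the set of valuations $E$ with $a(E,X)<1$, and that dictionary is never established. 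The depth is defined as the minimal \emph{length of a chain} of $w$-morphisms reaching a Gorenstein model; each $w$-morphism replaces the variety by a new one whose non-Gorenstein points, and hence whose remaining extraction requirements, are not read off from the original list of low-discrepancy divisors over $X$. Knowing that the divisors with $a(E,X')<1$ form a (strictly smaller) subset of those with $a(E,X)<1$ therefore does not by itself yield $dep(X')\leq dep(X)-1$; you would need something like ``$dep$ equals the number of divisors of discrepancy $<1$,'' which is a substantive claim (true for cyclic quotient points, but requiring proof in general and not available in the literature you cite).

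The second difficulty is the ``transport'' step. A $w$-morphism $Y\rightarrow X$ does not induce a morphism $Y\rightarrow X'$ across the flip; the composite $Y\dashrightarrow X'$ is only a birational map, and converting it into a chain of $w$-morphisms over $X'$ is exactly the content of the factorization theorem of \cite{CH}, whose proof is intertwined (by simultaneous induction on depth) with the very inequality you are trying to prove. So the plan either silently invokes Theorem \ref{ch}, making it circular as a from-scratch proof, or it must rebuild that factorization, at which point it collapses into the original argument of \cite{CH}. You correctly flag both obstacles as ``the hard part,'' but no mechanism is offered to overcome them, so the proposal is a plausible strategy outline rather than a proof.
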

\begin{rk}\label{Gcurve}
	Let $X$ be a terminal threefold. Then $dep(X)=0$ if and only if $X$ is Gorenstein. In this case, by Corollary 0.1 of \cite{B}, there is no flipping contraction.
	Also, if $X\rightarrow W$ is a divisorial contraction to curve, then $X$ is obtained by blowing up a LCI curve on $W$ (cf. \cite{Cut}, Theorem 4). 
\end{rk}
\begin{defn}
	Let $(X,P)$ be a germ of terminal threefold. It is known (cf. \cite{Mo}, Proposition 1b.3) that the singular point $P$ can be deformed into cyclic quotient points
	$P_1$, ..., $P_k$. The number $k$ is called the \emph{axial weight} of $(X,P)$ and will be denoted by $aw(P\in X)$. One can define $\Xi(P\in X)=\sum_{i=1}^k index(P_i)$.
	We will write $aw(X)=\sum_{P\in Sing(X)}aw(P\in X)$ and $\Xi(X)=\sum_{P\in Sing(X)}\Xi(P\in X)$. It is obvious that $aw(X)<\Xi(X)$.
\end{defn}
\begin{lem}[\cite{CZ}, Lemma 3.2]\label{xidep}
	Let $X$ be a terminal projective variety of dimension $3$, then $\Xi(X)\leq2dep(X)$.
\end{lem}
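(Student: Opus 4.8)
The plan is to induct on $dep(X)$ by telescoping along a tower of $w$-morphisms that realizes the depth. The first observation is that the statement is local on $\mathrm{Sing}(X)$: both $aw$ and $\Xi$ are sums of local contributions over the singular points, and a $w$-morphism $f\colon Y\to X$ is an extremal divisorial contraction to a single point $P$, so it leaves the germ of $X$ at every other singular point untouched and only redistributes the contribution of $P$ among the singular points of $Y$ lying on the exceptional divisor $E$. For the base case, if $dep(X)=0$ then $X$ is Gorenstein by Remark \ref{Gcurve}; a Gorenstein terminal point has empty basket (it deforms to smooth points), so $\Xi(X)=0=2\,dep(X)$.

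For the general case I would fix a tower of $w$-morphisms $X_n\to X_{n-1}\to\cdots\to X_1\to X_0=X$ with $X_n$ Gorenstein and $n=dep(X)$, which exists by definition of the depth. The heart of the argument is the local estimate that for each contraction $X_i\to X_{i-1}$ of an exceptional divisor $E_i$ to a point $P_i$ of index $r_i>1$,
\[
\Xi(P_i\in X_{i-1})-\sum_{Q\in E_i\cap\mathrm{Sing}(X_i)}\Xi(Q\in X_i)\le 2 .
\]
Granting this, and using the locality remark together with $\Xi(X_n)=0$, each difference $\Xi(X_{i-1})-\Xi(X_i)$ is at most $2$, so telescoping yields
\[
\Xi(X)=\sum_{i=1}^n\bigl(\Xi(X_{i-1})-\Xi(X_i)\bigr)\le 2n=2\,dep(X).
\]

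The hard part will be the local estimate, which is exactly where the explicit structure of $w$-morphisms must be used. The model case is a cyclic quotient point $P=\frac{1}{r}(1,a,r-a)$, whose $w$-morphism is the Kawamata weighted blow-up with weights $(1,a,r-a)$; the singular points of $E$ are the cyclic quotient points whose indices are precisely the weights exceeding $1$. Since the three weights sum to $r+1$, the drop in $\Xi$ equals $\#\{i:w_i=1\}-1$, which is at most $2$ and equals $2$ exactly for $\frac{1}{2}(1,1,1)$; this both verifies the estimate and shows that the constant $2$ is sharp. For a general terminal point one would appeal to the classification of divisorial contractions to points of Hayakawa, Kawakita and Kawamata, which presents the $w$-morphism as a weighted blow-up of an LCI germ in a cyclic quotient of $\A^4$ or $\A^5$, and then track the deformation (basket) data of $P$ through this blow-up. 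The same bookkeeping—the number of unit weights among the relevant coordinates, minus one—should again give the bound $2$, and carrying this out uniformly over the finitely many families of terminal singularities is the main technical obstacle.
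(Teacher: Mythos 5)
The paper does not prove this lemma; it is imported verbatim from [CZ, Lemma~3.2], so there is no in-paper argument to compare against. Your reduction is nonetheless the standard one and, as far as I can tell, the one underlying [CZ]: induct on depth along a minimal tower of $w$-morphisms (note that each truncation of a minimal tower is again minimal, since $dep(X_i)\ge dep(X_{i-1})-1$ forces $dep(X_i)=n-i$, so your telescoping is legitimate), use locality of $\Xi$, and reduce everything to the single estimate that one $w$-morphism raises $\Xi$ by at most $2$ as you descend. The base case and the bookkeeping around it are fine.

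The gap is that this local estimate is the entire content of the lemma, and you have only verified it for cyclic quotient points. The heuristic you extract from that case --- ``drop equals the number of unit weights minus one'' --- does not transfer to the other families, because for a non-quotient point the quantity $\Xi(P\in X)$ is a sum over the whole basket (axial weight times indices), and the singular points of $Y$ on the exceptional divisor are generally not quotient points but again $cA/a$-, $cD$-, $cAx$-type germs whose own axial weights must be tracked. For instance, for a $cA/r$ germ $(xy+f(z,u)=0)\subset\A^4/\frac{1}{r}(\alpha,-\alpha,1,0)$ with $\tau$-weight $k$ one has $\Xi(P\in X)=rk$, and the $w$-morphism of weight $\frac{1}{r}(a,b,1,r)$ with $a+b=rk$ produces germs of type $cA/a$ and $cA/b$ whose $\Xi$-contributions are $a k_1$ and $b k_2$ for axial weights $k_1,k_2$ determined by how $f$ restricts to the charts --- not simply $a+b$ --- plus possible extra singular points along the one-dimensional singular locus of the exceptional $\Pp(a,b,1,r)$. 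The inequality $\Xi(P\in X)-\sum_Q\Xi(Q\in Y)\le 2$ does hold, but establishing it requires running through Hayakawa's tables (as the paper itself does for a different purpose in Proposition~\ref{divm}), and that case analysis, which you explicitly defer, is where all the work lives. As written, your proposal is a correct proof skeleton with its only nontrivial step asserted rather than proved.
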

\begin{pro}[\cite{CZ}, Proposition 3.3]
	Let $X$ be a smooth projective threefold and assume that \[X=X_0\dashrightarrow X_1\dashrightarrow ...\dashrightarrow X_k=Z\]
	is a sequence of steps for the $K_X$-minimal model program of $X$. Then $\Xi(Z)\leq2\rho(X)$.
\end{pro}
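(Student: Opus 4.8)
The plan is to reduce the desired inequality to a bound on the depth via Lemma \ref{xidep}, and then to control $dep(Z)$ by tracking both the depth and the Picard number simultaneously along the given minimal model program. By Lemma \ref{xidep} we have $\Xi(Z)\le 2\,dep(Z)$, so it suffices to prove that $dep(Z)\le\rho(X)$.

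First I would record how each elementary step affects the two quantities $dep(X_i)$ and $\rho(X_i)$. Since $X=X_0$ is smooth it is Gorenstein, so $dep(X_0)=0$ by Remark \ref{Gcurve}. Each step $X_i\dashrightarrow X_{i+1}$ is either a divisorial contraction or a flip. A divisorial contraction is an extremal contraction of relative Picard number one, so $\rho(X_{i+1})=\rho(X_i)-1$, whereas a flip is a small modification and $\rho(X_{i+1})=\rho(X_i)$. For the depth: if $X_i\dashrightarrow X_{i+1}$ is a flip, then $dep(X_{i+1})<dep(X_i)$ by Proposition \ref{depflip}; if it is a divisorial contraction to a point, then $dep(X_{i+1})\le dep(X_i)+1$ by Proposition \ref{depdiv}. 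The essential structural feature here is that flips never raise the depth, which is exactly what allows the final bound to depend on $\rho(X)$ rather than on the (a priori unbounded) number of flips.

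The remaining case is a divisorial contraction to a curve, which is not covered by Proposition \ref{depdiv}; verifying that the depth rises by at most one in this case is the one place where the argument needs genuine input rather than bookkeeping. Here I would invoke Remark \ref{Gcurve}, which realizes such a contraction $X_i\to X_{i+1}$ as the blow-up of an LCI curve on $X_{i+1}$. Since this operation is an isomorphism away from the curve, I expect that the non-Gorenstein points of the two models correspond and that no new worse singularity can be produced by a single such extremal blow-up, so that $dep(X_{i+1})\le dep(X_i)$ (indeed I would anticipate equality). In any event the depth increment is at most $1$, which is all that is needed. Thus every divisorial contraction raises the depth by at most $1$ and lowers $\rho$ by exactly $1$, while every flip lowers the depth and leaves $\rho$ unchanged.

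Finally I would telescope. Summing the increments $dep(X_{i+1})-dep(X_i)$ from $i=0$ to $k-1$, the flip steps contribute nonpositively while each divisorial step contributes at most $1$, so
\[
dep(Z)=dep(X_0)+\sum_{i=0}^{k-1}\big(dep(X_{i+1})-dep(X_i)\big)\le \#\{\text{divisorial contractions}\}.
\]
Since only divisorial contractions change $\rho$ and each drops it by one, the number of divisorial contractions equals $\rho(X)-\rho(Z)\le\rho(X)$. Hence $dep(Z)\le\rho(X)$, and Lemma \ref{xidep} yields $\Xi(Z)\le 2\,dep(Z)\le 2\rho(X)$. The only real obstacle is the curve case of the per-step depth estimate; the rest is the elementary accounting above, hinging on the fact that flips do not increase the depth.
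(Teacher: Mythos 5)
Your overall strategy coincides with the one behind the paper's (cited) proof: the proposition is quoted from \cite{CZ}, Proposition 3.3, and Remark \ref{deprho} records that the argument there actually establishes $dep(Z)\le\rho(X)$, which together with Lemma \ref{xidep} gives $\Xi(Z)\le 2\,dep(Z)\le 2\rho(X)$. Your accounting of $\rho$ and $dep$ along the MMP --- flips strictly decreasing the depth by Proposition \ref{depflip}, divisorial contractions to points raising it by at most one by Proposition \ref{depdiv}, and divisorial contractions dropping $\rho$ by exactly one --- is exactly the right bookkeeping, and the telescoping at the end is fine.

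There is, however, a genuine gap in the one step you yourself flag as needing real input: the divisorial contraction to a curve. Remark \ref{Gcurve} identifies such a contraction with the blow-up of an LCI curve only when the source is Gorenstein, i.e.\ when $dep(X_i)=0$; for non-Gorenstein $X_i$ this description is simply not available (which is precisely why the paper must invoke the factorization of Theorem \ref{ch} whenever it meets a divisorial contraction to a curve, e.g.\ in Proposition \ref{b3}). Moreover, even granting an LCI-blow-up description, ``isomorphism away from the curve'' does not by itself compare $dep(X_{i+1})$ with $dep(X_i)$: the blown-up curve may pass through singular points of $X_{i+1}$, and depth is defined through a global chain of $w$-morphisms, so the expected inequality is not a formal consequence of a correspondence of singularities off the curve. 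The statement you need --- that a divisorial contraction to a curve does not increase the depth (or at least raises it by at most one) --- is a result of \cite{CH}, provable by induction on depth from the diagram of Theorem \ref{ch}: there $dep(Y)=dep(X_i)-1$ by Remark \ref{rch}, the flips and flops $Y\dashrightarrow Y'$ do not increase depth by Proposition \ref{depflip}, the inductive hypothesis applies to the divisorial contraction $f'$, and Proposition \ref{depdiv} controls $g'$. Until that ingredient is imported or proved, the per-step estimate, and hence the telescoping sum, is incomplete; with it supplied, the rest of your argument is correct.
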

\begin{rk}\label{deprho}
	In the proof of \cite{CZ} Proposition 3.3, one can see that $dep(Z)\leq\rho(X)$. We will use this result later.
\end{rk}

The next important result is the factorization in \cite{CH}.
\begin{thm}[\cite{CH}, Theorem 3.3]\label{ch}
	Let $g:X\subset C\rightarrow W\ni P$ be an extremal neighborhood which is isolated (resp. divisorial). If $X$ is not Gorenstein, then we have a diagram
	\[\xymatrix{ Y \ar@{-->}[rr]\ar[d]^f & & Y' \ar[d]_{f'} \\ X\ar[rd]^g && X'\ar[ld]_{g'}	\\ & W &}\]
	where $Y\dashrightarrow Y'$ consists of flips and flops over $W$, $f$ is a $w$-morphism, $f'$ is a divisorial contraction (resp. a divisorial contraction to a curve) and $g':X'\rightarrow W$
	is the flip of $g$ (resp. $g'$ is divisorial contraction to a point).
\end{thm}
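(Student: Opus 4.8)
The plan is to build the diagram from the top: I would extract $Y$ by a single $w$-morphism and then obtain $Y\dashrightarrow Y'$ together with $f'$ as the output of a two-ray minimal model program over $W$. The central bookkeeping device is the depth, which will force termination, and the ambient constraint is that the relative Picard number over $W$ stays equal to $2$ throughout.

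First I would extract the $w$-morphism $f\colon Y\to X$. Since $X$ is not Gorenstein, Remark \ref{Gcurve} gives $dep(X)\ge 1$, so the center of $g$ meets a point $P$ of index $r>1$; for a flipping neighborhood the non-Gorenstein points lie on $C$, so $P\in C$. By Hayakawa's existence result (\cite{Hay2}, Theorem 1.2) one can extract over $P$ a single divisor $E$ with discrepancy $1/r$, and this $f$ is the desired $w$-morphism. The resulting $Y$ is again $\Q$-factorial terminal, and since $f$ is extremal over $X$ and $g$ is extremal over $W$, one has $\rho(Y/W)=\rho(Y/X)+\rho(X/W)=2$.

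Next I would play the two-ray game for $Y$ over $W$. Because $\rho(Y/W)=2$, the relative cone $\overline{NE}(Y/W)$ has exactly two extremal rays, one of which, $R_f$, is contracted by $f$; I would always operate on the \emph{other} ray $R$. Writing $K_Y=f^{\ast}K_X+\tfrac1r E$ and using that $-K_X$ is $g$-ample in both cases, the ray $R$ is either small (then it is flipped if $K_Y\cdot R<0$ and flopped if $K_Y\cdot R=0$) or divisorial (then the game stops). After each flip or flop the relative Picard number remains $2$, so a new ``other ray'' appears and the game continues. Each flip strictly drops the depth by Proposition \ref{depflip}, while flops leave it unchanged; since the depth is a nonnegative integer and only finitely many flops can occur for a fixed $W$, the process terminates in a morphism $f'\colon Y'\to X'$ contracting a divisor. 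This produces $Y\dashrightarrow Y'$, a composite of flips and flops over $W$, followed by the divisorial contraction $f'$.

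Finally I would identify $g'\colon X'\to W$ by analyzing the ray that survives $f'$. In the isolated case $g$ is small with $\rho(X/W)=1$, the only divisor contracted by $Y\to W$ is $E$, so $f'$ contracts its strict transform and $K_{X'}$ becomes $g'$-ample, forcing $g'$ to be the flip of $g$. In the divisorial case $g$ already contracts a divisor onto a curve, so $Y\to W$ contracts two divisors and the count shifts: $f'$ contracts one of them onto a curve and $g'$ becomes a divisorial contraction to a point. I expect the two genuine obstacles to be: (a) showing that every intermediate divisorial-looking step is in fact a flop, so that the \emph{only} true divisorial contraction is the terminal $f'$, which is where the local classification of terminal singularities and the detailed structure of three-dimensional extremal neighborhoods must enter; and (b) proving termination of the flops in the two-ray game and verifying that the surviving contraction has exactly the claimed type in each of the two cases. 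Controlling (a) and (b) is precisely the heart of the argument.
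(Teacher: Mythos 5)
This theorem is quoted in the paper from \cite{CH}, Theorem 3.3, without proof, so there is no internal argument to compare against; your proposal has to be measured against the actual proof in Chen--Hacon. Your outline does follow their strategy: seed the diagram with a $w$-morphism $f\colon Y\to X$ over a non-Gorenstein point of the extremal neighborhood (existence by Hayakawa), observe that $\rho(Y/W)=2$, and run the two-ray game over $W$ until a divisorial contraction $f'$ appears, using the depth to control termination. The Picard-number bookkeeping and the identification of $g'$ in the isolated versus divisorial cases are also in the right spirit.

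However, as you yourself concede in items (a) and (b), the proposal defers exactly the points that constitute the proof. Three concrete gaps. First, in the two-ray game the ray $R$ opposite to $R_f$ need not be $K_Y$-negative or $K_Y$-trivial a priori: since $K_Y=f^{\ast}K_X+\frac{1}{r}E$ and $E\cdot R$ can be positive, you must exclude a $K_Y$-positive small ray (an antiflip), for which no contraction-plus-flip exists; the $g$-ampleness of $-K_X$ alone does not do this, and handling it is a substantial part of the argument in \cite{CH}. Second, your termination argument asserts that ``only finitely many flops can occur for a fixed $W$'' with no justification; depth is unchanged by flops, so Proposition \ref{depflip} gives no bound on their number. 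The structure actually established in \cite{CH} (and recorded as Remark \ref{rch}(ii) in this paper) is that at most the first step $Y_0\dashrightarrow Y_1$ is a flop and every subsequent step is a flip, which is what makes the depth induction terminate --- this has to be proved, not assumed. Third, the claim that no intermediate step of the game is divisorial, so that the unique divisorial contraction is the terminal $f'$ (contracting the strict transform of $E$ in the isolated case), is asserted rather than argued. As written, the proposal is a correct road map of the Chen--Hacon proof rather than a proof.
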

\begin{rk}\label{rch}
	The diagram above satisfied more properties. 
	\begin{enumerate}[(i)]
	\item $dep(Y)=dep(X)-1$. This is by the construction of $Y$ in \cite{CH}.
	\item Assume that $Y\dashrightarrow Y'$ is decomposed into $Y=Y_0\dashrightarrow Y_1\dashrightarrow...\dashrightarrow Y_l=Y'$, then $Y_i\dashrightarrow Y_{i+1}$ is a flip for $i>0$.
		This is the step 4 in the proof of Theorem 3.3 in \cite{CH}.
	\end{enumerate}
\end{rk}
\subsection{Topology of terminal threefolds}
We will compute the change of Betti numbers under threefold birational maps in this subsection. All Betti numbers are known except for $b_3$.
\begin{lem}[\cite{CT}, Lemma 2.17]\label{BettiD}
 	Let $Y\rightarrow X$ be an elementary divisorial contraction within $\Q$-factorial projective threefolds with terminal singularities.
 	Then $b_i(Y)=b_i(X)$ if $i=0,1,5,6$, and $b_i(Y)=b_i(X)+1$ if $i=2,4$.
 \end{lem}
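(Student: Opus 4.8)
The plan is to use that an elementary divisorial contraction $f\colon Y\to X$ is an isomorphism away from its (irreducible) exceptional divisor. Write $E\subset Y$ for the exceptional divisor and $Z=f(E)\subset X$ for its centre, so that $f$ restricts to an isomorphism $Y\setminus E\xrightarrow{\sim}X\setminus Z$, where $Z$ is either a point or a curve. Two structural facts will drive the argument. First, because the contraction is elementary (extremal), $\rho(Y)=\rho(X)+1$, the extra numerical class being $[E]$. Second, $X$ and $Y$ are $\Q$-factorial terminal, hence have rational singularities, so their Hodge-theoretic birational invariants---the spaces $H^0(\Omega^p)$ and, more generally, the transcendental parts of the cohomology---agree with those of any common resolution. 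As an auxiliary tool I will use the abstract blow-up (Mayer--Vietoris) long exact sequence
\[\cdots\to H^k(X,\Q)\xrightarrow{(f^*,\,i_Z^*)}H^k(Y,\Q)\oplus H^k(Z,\Q)\to H^k(E,\Q)\to H^{k+1}(X,\Q)\to\cdots\]
attached to the square $E\to Y$, $Z\to X$, which is valid with $\Q$-coefficients.

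The degrees away from the middle that do not grow are dispatched quickly. Both $X$ and $Y$ are irreducible projective threefolds, so $b_0=1$ (connected) and $b_6=1$ (fundamental class). For $b_5$ I use that $f^*$ is injective on rational cohomology in every degree, as holds for any proper surjective morphism; since $\dim E=2$ and $\dim Z\le1$, the terms $H^5(E,\Q)$ and $H^5(Z,\Q)$ vanish for dimension reasons, so the sequence forces $f^*\colon H^5(X,\Q)\to H^5(Y,\Q)$ to be surjective as well, hence an isomorphism, giving $b_5(Y)=b_5(X)$. Finally $b_1(Y)=b_1(X)$ because $b_1=2h^0(\Omega^1)$ is a birational invariant for varieties with rational singularities.

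It remains to show that $b_2$ and $b_4$ each increase by exactly one. I would fix a common resolution $W\to X$ factoring through $Y$ and regard $H^2(X,\Q)\hookrightarrow H^2(Y,\Q)\hookrightarrow H^2(W,\Q)$ as sub-Hodge-structures. Each $H^2$ splits into an algebraic part (the N\'eron--Severi space, of dimension $\rho$) and a transcendental part, and the transcendental part is unchanged under a proper birational morphism; hence $b_2(Y)-b_2(X)=\rho(Y)-\rho(X)=1$. For $b_4$ the same scheme applies to the ``curve'' half of the cohomology: the algebraic part of $H^4$ is spanned by curve classes and again has dimension $\rho$ (by duality with $\mathrm{N}^1$), while its transcendental complement is a birational invariant, so $b_4(Y)-b_4(X)=\rho(Y)-\rho(X)=1$. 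Concretely, in both cases the single new class is accounted for by $[E]$, whose restriction to $E$ is $f$-negative and therefore does not come from $X$.

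The main obstacle is precisely the step just sketched for $b_2$ and $b_4$: one must know that the transcendental (non-algebraic) parts of $H^2$ and $H^4$ really are birational invariants \emph{for the singular varieties $X$ and $Y$ themselves}, not merely for a smooth model. This cannot be obtained from Poincar\'e duality, since a $\Q$-factorial terminal threefold need not be a $\Q$-homology manifold---an ordinary double point already has link $S^2\times S^3$---so the high degrees cannot be reduced to the low ones by duality, and the two ends of the cohomology must be controlled independently. Carrying this out rigorously requires the mixed Hodge structures on $H^*(X,\Q)$ and $H^*(Y,\Q)$ together with the rationality of the singularities in order to compare them with a common resolution; this Hodge-theoretic bookkeeping through the singularities is the technical heart of the proof.
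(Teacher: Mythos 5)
The paper does not prove this statement at all: it is quoted verbatim from \cite{CT}, Lemma~2.17, and used as a black box, so there is no in-paper argument to compare yours against. Judged on its own terms, your proposal is a reasonable outline but contains one outright false step and leaves the actual content of the lemma unproved. The false step is the assertion that $f^*$ is injective on rational cohomology in every degree ``as holds for any proper surjective morphism'': the normalization $\Pp^1\to C$ of a nodal cubic is proper and surjective yet kills $H^1(C,\Q)\cong\Q$. Injectivity of $f^*$ in degree $5$ is true here, but it has to come from somewhere else; in your own Mayer--Vietoris sequence it follows once you check that $H^4(Y,\Q)\to H^4(E,\Q)\cong\Q$ is surjective (restrict the square of an ample class to the irreducible surface $E$), which then forces the connecting map $H^4(E,\Q)\to H^5(X,\Q)$ to vanish. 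As written, the $b_5$ case rests on a wrong general principle.

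The more serious gap is in the cases $i=2,4$, which are the substance of the lemma. You reduce them to the claim that $H^2$ and $H^4$ of the \emph{singular} varieties each split as an algebraic part of dimension exactly $\rho$ plus a ``transcendental'' complement that is a birational invariant, and you then explicitly defer the proof of that claim, calling it the technical heart. That is precisely the point where the lemma lives: for $H^2$ one must justify that $N^1(X)\otimes\Q$ injects into $H^2(X,\Q)$ with a birationally invariant complement for $\Q$-factorial terminal threefolds, and for $H^4$ the situation is worse, because (as you yourself note) Poincar\'e duality fails, so there is no a priori copy of $N_1(X)\otimes\Q$ of dimension $\rho$ sitting inside $H^4(X,\Q)$, let alone a canonical splitting with birationally invariant complement. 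Asserting such a decomposition is essentially equivalent to the statement $b_4(Y)=b_4(X)+1$ that you are trying to prove. So the proposal identifies the right heuristic ($\rho$ jumps by one, everything else is stable) but does not contain a proof of the two Betti numbers that actually change; one would need genuine input about the mixed Hodge structures or the intersection cohomology of $X$ and $Y$ (e.g., the isolatedness and rationality of terminal threefold singularities, and the comparison $H^i\cong IH^i$ in the relevant degrees) to close it.
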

\begin{cor}\label{ctop}
	If $X\rightarrow W$ is extremal divisorial contraction, then \[b_3(W)-b_3(X)=\ctop{X}-\ctop{W}-2.\]
\end{cor}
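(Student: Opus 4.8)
The plan is to derive this identity as a purely formal consequence of Lemma \ref{BettiD} together with the expression of the topological Euler characteristic as the alternating sum of Betti numbers. First I would observe that $X$ and $W$ are compact complex threefolds, hence (away from the finitely many isolated terminal singularities) real $6$-manifolds, so that $\ctop{X}=\sum_{i=0}^{6}(-1)^i b_i(X)$ and likewise $\ctop{W}=\sum_{i=0}^{6}(-1)^i b_i(W)$. This formula remains valid for the possibly singular projective target since we compute with ordinary singular cohomology of a compact space, and terminal threefold singularities are isolated.

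Next I would apply Lemma \ref{BettiD} to the extremal divisorial contraction $X\rightarrow W$, reading that lemma with $X$ playing the role of the source $Y$ and $W$ playing the role of the base $X$. This yields $b_i(X)=b_i(W)$ for $i\in\{0,1,5,6\}$ and $b_i(X)=b_i(W)+1$ for $i\in\{2,4\}$, leaving only $b_3$ undetermined. Subtracting the two alternating sums, every term with $i\in\{0,1,5,6\}$ cancels; the indices $i=2$ and $i=4$ each contribute $+1$ to $\ctop{X}-\ctop{W}$, while the index $i=3$ contributes $-(b_3(X)-b_3(W))$. Therefore $\ctop{X}-\ctop{W}=2-(b_3(X)-b_3(W))$, which rearranges to the claimed equality $b_3(W)-b_3(X)=\ctop{X}-\ctop{W}-2$.

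There is essentially no genuine obstacle here: the only point requiring a word of justification is that the Euler-characteristic-as-alternating-sum identity holds for the singular model $W$, which is standard for compact spaces and for the ordinary Betti numbers being used. All the substantive input is already packaged in Lemma \ref{BettiD}, so the corollary follows by a one-line bookkeeping of signs. I would present it as an immediate computation rather than a structured multi-step argument.
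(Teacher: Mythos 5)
Your proposal is correct and is precisely the argument the paper intends: the corollary is stated without proof immediately after Lemma \ref{BettiD}, and the intended justification is exactly your bookkeeping, namely writing $\ctop{X}-\ctop{W}$ as the alternating sum of the differences $b_i(X)-b_i(W)$, using the lemma to see that only $i=2,4$ contribute $+1$ each and $i=3$ contributes $-(b_3(X)-b_3(W))$. Your side remark about the alternating-sum formula remaining valid on the singular compact variety $W$ is the right point to flag and is standard, so nothing is missing.
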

\begin{pro}\label{b2}
	 Let $X$ be a smooth three-fold and $X=X_0\dashrightarrow X_1\dashrightarrow...\dashrightarrow X_m=X_{min}$ is the process of minimal model program. Then $b_0$, $b_1$, $b_5$ and $b_6$
	 are constant among the sequence $\{X_i\}$ and both $b_2$ and $b_4$ are decreasing. Moreover, $b_2$ and $b_4$ are strictly decease by one if $X_i\rightarrow X_{i+1}$
	 is a divisorial contraction, and remain unchange if $X_i\dashrightarrow X_{i+1}$ is a flip.
\end{pro}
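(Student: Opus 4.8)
The divisorial-contraction case is already contained in Lemma~\ref{BettiD}: it gives $b_0,b_1,b_5,b_6$ constant and $b_2,b_4$ dropping by exactly one under each elementary divisorial contraction. So the whole content of the proposition is the assertion that a flip leaves $b_0,b_1,b_2,b_4,b_5,b_6$ unchanged, and the plan is to prove this by comparing both sides of the flip to the base of the flipping contraction. Write the flip as $f\colon X\to W\leftarrow X'\colon f'$, where $f$ and $f'$ are the flipping and flipped contractions; both are extremal small contractions of $\Q$-factorial terminal threefolds, $W$ is projective, and $\rho(X)=\rho(X')=\rho(W)+1$.

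First I would run the Leray spectral sequence $E_2^{p,q}=H^p(W,R^qf_*\Q)\Rightarrow H^{p+q}(X)$, and afterwards the identical one for $f'$. The sheaves $R^qf_*\Q$ are easy to identify: $f$ is an isomorphism over the complement of a finite set $\Sigma\subset W$, has connected fibres, and each fibre $f^{-1}(P)$ is a tree of smooth rational curves (by the theory of three-dimensional flips), so $H^1(f^{-1}(P),\Q)=0$. Thus $R^0f_*\Q=\Q_W$, $R^1f_*\Q=0$, $R^2f_*\Q$ is a skyscraper sheaf on $\Sigma$ with stalk $\Q^{c_P}$ at $P$ (where $c_P$ is the number of components of $f^{-1}(P)$), and $R^qf_*\Q=0$ for $q\ge 3$. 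Since a skyscraper sheaf has no higher cohomology, the only non-zero terms are $E_2^{p,0}=H^p(W)$ and $E_2^{0,2}=\Q^N$ with $N=\sum_P c_P$, and the only differential that can act between them is $d_3\colon E_3^{0,2}\to E_3^{3,0}=H^3(W)$. Reading off the abutment, $H^i(X)\cong H^i(W)$ for $i\in\{0,1,4,5,6\}$ with no differential entering, so $b_i(X)=b_i(W)$; the same computation for $f'$ gives $b_i(X')=b_i(W)$, and hence these five Betti numbers agree for $X$ and $X'$ at once.

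The remaining index $i=2$ is the crux. Here the spectral sequence yields only $b_2(X)=b_2(W)+\dim\ker d_3$, so I must show $\dim\ker d_3=1$, that is, that the image of the edge map $H^2(X)\to E_2^{0,2}=\Q^N$, $\alpha\mapsto(\alpha|_Z)_Z$ (restriction to the exceptional components $Z\cong\Pp^1$), is exactly one-dimensional no matter how large $N$ is. To see this I would pull back to a resolution $\pi\colon\tilde X\to X$ chosen to be an isomorphism over the generic point of each $Z$, and apply the Lefschetz $(1,1)$ theorem on the smooth $\tilde X$: the restriction of a rational cohomology class to a smooth rational curve is a morphism of Hodge structures into the pure $(1,1)$-line $H^2(Z)$, hence depends only on the N\'eron--Severi part of the class. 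Therefore the image above equals the image of $\mathrm{NS}(X)_{\Q}\to\Q^N$, $D\mapsto(D\cdot Z)_Z$. Extremality of $f$ makes $\ker\!\big(f_*\colon N_1(X)_{\Q}\to N_1(W)_{\Q}\big)$ one-dimensional, so all the contracted classes $[Z]$ are numerically proportional and this last image is one-dimensional; it is non-zero because an ample class meets every $Z$ positively. Hence $b_2(X)=b_2(W)+1$, and the same argument applied to $f'$ gives $b_2(X')=b_2(W)+1$, so $b_2(X)=b_2(X')$.

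Combining the two computations, the flip preserves $b_0,b_1,b_2,b_4,b_5,b_6$, which together with the divisorial case finishes the proposition. The steps I expect to need the most care are exactly the two inputs concerning the singular threefold $X$: the fact that every exceptional fibre is a tree of rational curves (so that $R^1f_*\Q=0$ and the $i=1,2$ analysis goes through), and the transfer of the Lefschetz $(1,1)$ property from a resolution down to $X$. If one prefers to sidestep these, the equalities for $b_0,b_1,b_5,b_6$ also follow from their birational invariance, and the equality $b_2(X)=b_2(W)+1$ follows once one knows that the small contraction $f$ changes the transcendental part of $H^2$ not at all and the algebraic part by exactly the one extremal class.
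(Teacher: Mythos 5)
Your route is genuinely different from the paper's. The paper never touches the topology of the flipping contraction directly: it invokes the Chen--Hacon factorization (Theorem \ref{ch}), which replaces the flip $X\dashrightarrow X'$ by a zig-zag $Y\to X$, $Y\dashrightarrow Y'$, $Y'\to X'$ consisting of a $w$-morphism, flips of strictly smaller depth, flops, and a divisorial contraction; it then inducts on $dep(X)$ using Proposition \ref{depflip} and Remark \ref{rch}, so that the only topological inputs are Lemma \ref{BettiD} and the invariance of Betti numbers under flops. Your approach instead runs the Leray spectral sequence of $f\colon X\to W$ and $f'\colon X'\to W$. The computation of $R^qf_*\Q$ and the conclusion $b_i(X)=b_i(W)=b_i(X')$ for $i\in\{0,1,4,5,6\}$ are correct and arguably cleaner than the paper's induction, and they do not depend on the factorization machinery at all.

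However, the crucial step $\dim\ker d_3=1$ has a genuine gap as written. Your plan is to pull $\alpha\in H^2(X,\Q)$ back to a resolution $\pi\colon\tilde X\to X$, use Lefschetz $(1,1)$ there to replace $\pi^*\alpha$ by a class in $\mathrm{NS}(\tilde X)_{\Q}$, and then invoke extremality of $f$ to get proportionality of the curve classes. But extremality lives on $X$, not on $\tilde X$: the Néron--Severi class you produce on $\tilde X$ need not descend to (or be a pullback from) $X$, and the proper transforms $\tilde Z_j$ need not be numerically proportional in $N_1(\tilde X)$ once exceptional divisors of $\pi$ are present. So the containment you actually obtain is ``image of $H^2(X,\Q)\to\Q^N$ is contained in the image of $\mathrm{NS}(\tilde X)_{\Q}\to\Q^N$'', which can a priori be as large as $N$-dimensional. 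To close the gap you need a Lefschetz $(1,1)$-type statement on the singular $X$ itself --- for instance, that every weight-two rational Hodge class in $H^2(X,\Q)$ is the class of a $\Q$-Cartier divisor, which for rational singularities can be extracted from the exponential sequence together with $H^2(X,\Oo_X)\cong H^2(\tilde X,\Oo_{\tilde X})$, but this is a nontrivial input that must be proved or cited, not asserted. The paper's factorization argument sidesteps exactly this point, at the cost of relying on Theorem \ref{ch} and on the (itself unproven in the paper) invariance of Betti numbers under flops.
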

\begin{proof} Proposition \ref{BettiD} asserts the divisorial contraction case. Assume that $X_i\dashrightarrow X_{i+1}$ is a flip. We will apply Theorem \ref{ch}
	and induction on $dep(X_i)$. One has the diagram
	\[\xymatrix{ Y \ar@{-->}[rr]\ar[d]^f & & Y' \ar[d]_{f'} \\ X_i\ar[rd]^g && X'\ar[ld]_{g'}	\\ & W &}.\]
	Note that by Remark \ref{rch} we have $dep(Y)=dep(X)-1$. One can write \[Y=Y_0\dashrightarrow Y_1\dashrightarrow...\dashrightarrow Y_l=Y'\] and
	$Y_j\dashrightarrow Y_{j+1}$ is a flip or flop for all $j$, hence $dep(Y_j)\geq dep(Y_{j+1})$ by Proposition \ref{depflip}. By induction hypothesis and the fact that Betti numbers in invariant
	after flop,	we have \[b_i(Y)=b_i(Y')\mbox{ for }i\neq3.\] Hence \[b_i(X)=b_i(Y)=b_i(Y')=b_i(X')\mbox{ for }i=0,1,5,6\] and
	\[b_i(X)=b_i(Y)-1=b_i(Y')-1=b_i(X')\mbox{ for }i=2,4.\]
\end{proof}

\section{The estimate on topology}\label{stop}
The purpose of this section is to prove Theorem \ref{thm}.
We shall prove that there is an integer $N^n_{d,k}$ such that any algebraic set in $\A^n$ defined by $k$ polynomials of degree $\leq d$ has topological Euler characteristic bounded by $N^n_{d,k}$.
Similarly, there is an integer $M^n_{d,k}$ such that an algebraic set in a weighted projective space of dimension $m$ which is defined by $k$ weighted homogeneous polynomials of weight $\leq d$
has topological Euler characteristic bounded by this integer.
To prove the existence of such kind of integers, the basic idea is to reduce the question into lower dimensional cases. We will prove:
\begin{pro}\label{Ncase}
	Assume that $N^{n-1}_{e,l}$ exists for all $e$, $l\in\N$,
	then $N^n_{d,k}$ exists.
\end{pro}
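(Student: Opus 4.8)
The plan is to project to $\A^{n-1}$ and integrate the topological Euler characteristic along the fibres. Write the coordinates on $\A^n$ as $(x_1,\dots,x_{n-1},t)$ and let $\pi:\A^n\to\A^{n-1}$ be the projection forgetting $t$. For a defining polynomial $f_i$ of degree $\le d$, expand $f_i=\sum_{j=0}^d c_{ij}(x)t^j$ with $\deg c_{ij}\le d$. The fibre $X_I\cap\pi^{-1}(x)$ is an algebraic subset of the line $\pi^{-1}(x)\cong\A^1$, hence is either all of $\A^1$ or a finite set of at most $d$ points. It is all of $\A^1$ exactly on the closed set $B_\infty=\{x: c_{ij}(x)=0\ \forall i,j\}\subset\A^{n-1}$, cut out by at most $k(d+1)$ polynomials of degree $\le d$; thus $|\ctop{B_\infty}|\le N^{n-1}_{d,\,k(d+1)}$ by hypothesis, and since $X_I\cap\pi^{-1}(B_\infty)=B_\infty\times\A^1$ this part contributes exactly $\ctop{B_\infty}$ to $\ctop{X_I}$.

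Over the complement every fibre is finite, so by the additivity of the topological Euler characteristic for complex algebraic varieties (equivalently, integration with respect to $\chi$ along $\pi$),
\[
\ctop{X_I}=\ctop{B_\infty}+\sum_{m=1}^{d} m\cdot\ctop{C_m},\qquad C_m=\{x\notin B_\infty:\ \#(X_I\cap\pi^{-1}(x))=m\}.
\]
It therefore suffices to bound $|\ctop{C_m}|$ by a constant depending only on $d$ and $k$, using the existence of the numbers $N^{n-1}_{e,l}$.

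The key point is that each $C_m$ is a constructible subset of $\A^{n-1}$ whose description has complexity bounded in terms of $d,k$ only. Here $\#(X_I\cap\pi^{-1}(x))$ is the number of \emph{distinct} common roots of the univariate polynomials $f_1(x,\cdot),\dots,f_k(x,\cdot)$, i.e. the number of distinct roots of their $\gcd$, call it $g_x$. The degree $e(x)$ of $g_x$ is read off from the rank of a generalized Sylvester matrix whose entries are the $c_{ij}(x)$, so the loci $\{e(x)=e\}$ are cut out by vanishing and non-vanishing of minors, i.e. by polynomials in $x$ of degree bounded by a function of $d,k$. On each such locus the monic $g_x$ has coefficients that are regular functions of $x$ (Cramer's rule), so its number of distinct roots, namely $e-\deg\gcd(g_x,g_x')$, is in turn governed by subresultants of $g_x$ and $g_x'$ — again polynomial conditions in $x$ of bounded degree. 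Thus each $C_m$ is a finite Boolean combination of algebraic subsets of $\A^{n-1}$, each defined by at most $L(d,k)$ polynomials of degree $\le D(d,k)$, the number of pieces being bounded in terms of $d,k$ alone.

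By inclusion–exclusion $\ctop{C_m}$ is then an integer combination, with universally bounded coefficients, of Euler characteristics of algebraic subsets of $\A^{n-1}$ of this bounded complexity; each such is bounded in absolute value by $N^{n-1}_{D(d,k),\,L(d,k)}$ by the induction hypothesis. Hence $|\ctop{C_m}|$, and with it $|\ctop{X_I}|$, is bounded by a constant depending only on $n,d,k$, which we take as $N^n_{d,k}$. I expect the main obstacle to be the third step: extracting the stratification of the base by fibre cardinality together with \emph{uniform} bounds — independent of the particular $f_i$ and of $n$ — on the degrees and number of its defining equations. This is precisely where the explicit subresultant and $\gcd$ bookkeeping is needed rather than an abstract generic-triviality theorem, since only the former yields the uniformity required to feed the induction.
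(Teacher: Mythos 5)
Your proposal is correct and follows essentially the same route as the paper: project to $\A^{n-1}$, split off the locus where the fibre is a whole line, stratify the rest by fibre cardinality using rank conditions on generalized Sylvester/resultant matrices (including the derivative trick to count distinct rather than multiple common roots), and bound each stratum by the $(n-1)$-dimensional hypothesis via inclusion--exclusion. The bookkeeping you correctly flag as the main obstacle is carried out in the paper by proving a relative version (Proposition \ref{NN}) over an arbitrary bounded-complexity base $Z$ and running auxiliary inductions on $\sum_i\deg f_i$ (to handle the strata where leading coefficients vanish, where the specialized Sylvester matrix misreports the gcd degree) and on the minimal nullities $s_0$ and $s_1$.
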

\begin{pro}\label{Mcase} If $M^{m}_{d,k}$ exists for all $m<n$ and $N^n_{d,l}$ exists for all $l\geq k$,
	then $M^n_{d,k}$ exists.
\end{pro}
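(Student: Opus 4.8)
The plan is to reduce the $n$-dimensional weighted projective case to the $(n-1)$-dimensional one together with the affine estimate, by peeling off a coordinate chart. Write $\Pp=\Pp(a_0,\dots,a_n)$, and set $H=\{x_n=0\}$ and $U=\{x_n\neq0\}$, a complementary pair consisting of a closed and an open subset. Then $H\cong\Pp(a_0,\dots,a_{n-1})$ is a weighted projective space of dimension $n-1$, and restricting $f_1,\dots,f_k$ to $\{x_n=0\}$ exhibits $Y_I\cap H$ as the zero locus of $k$ weighted homogeneous polynomials of weight $\leq d$ on it; by hypothesis $|\ctop{Y_I\cap H}|\leq M^{n-1}_{d,k}$. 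Since $\ctop{Y_I}=\ctop{Y_I\cap U}+\ctop{Y_I\cap H}$ by additivity of the topological Euler characteristic, everything comes down to bounding the chart piece $\ctop{Y_I\cap U}$ uniformly in the weights.

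Next I would identify the chart $U$ with the cyclic quotient $\A^n/\mu_{a_n}$, where a generator of $\mu_{a_n}$ acts diagonally on the coordinates $x_0,\dots,x_{n-1}$ (after normalizing $x_n=1$) with weights $a_0,\dots,a_{n-1}$. Dehomogenizing, $Y_I\cap U$ is the quotient $Z/\mu_{a_n}$, where $Z=\{g_1=\dots=g_k=0\}\subset\A^n$ and $g_i(x)=f_i(x,1)$. Because all weights are positive integers, each monomial surviving dehomogenization has ordinary degree at most its weight, so $\deg g_i\leq wt(f_i)\leq d$; thus $Z$ is an affine set defined by $k$ polynomials of degree $\leq d$.

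The crux --- and the step I expect to be the main obstacle --- is bounding $\ctop{Z/\mu_{a_n}}$ uniformly in $a_n$, which can be arbitrarily large. Here I would invoke the averaging formula $\ctop{Z/G}=\frac{1}{|G|}\sum_{g\in G}\ctop{Z^g}$ for the Euler characteristic of a finite quotient (which itself follows by stratifying $Z$ according to stabilizer type and using multiplicativity of $\ctop$ under free finite quotients). For each $g\in\mu_{a_n}$ the fixed subspace $(\A^n)^g$ is a coordinate subspace, so $Z^g=Z\cap(\A^n)^g$ is cut out in $\A^n$ by $g_1,\dots,g_k$ together with at most $n$ of the linear equations $x_j=0$, i.e.\ by at most $k+n$ polynomials of degree $\leq d$. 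Hence $|\ctop{Z^g}|\leq\bar N:=\max_{k\leq l\leq k+n}N^n_{d,l}$ by hypothesis, and the triangle inequality gives $|\ctop{Z/\mu_{a_n}}|\leq\bar N$, with no dependence on $a_n$.

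Combining the two estimates, the constant $M^n_{d,k}:=\bar N+M^{n-1}_{d,k}$ works. The only points that require genuine care are the validity of the averaging formula for the compactly supported Euler characteristic (equivalently, that $\ctop$ is additive over the stabilizer stratification and multiplicative under free cyclic quotients) and the observation that each fixed-point contribution $Z^g$ is governed by a bounded number of equations of bounded degree --- this boundedness, averaged over the group, is exactly what produces a bound independent of the weights.
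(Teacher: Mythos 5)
Your argument is correct, and for the crucial step it takes a genuinely different route from the paper. Both proofs begin the same way: split off the stratum at infinity (a zero locus in an $(n-1)$-dimensional weighted projective space, controlled by $M^{n-1}_{d,k}$) and reduce to the affine chart, realized as $Z/\mu_{a}$ with $Z\subset\A^n$ cut out by $k$ dehomogenized equations of degree $\leq d$. Where you diverge is in handling the cyclic quotient. The paper analyzes the branched covering $\bar Y\to Y'$ explicitly: it writes $\ctop{Y'}=\frac{1}{a}\left(\ctop{\bar Y}-\ctop{\bar R}\right)+\ctop{R}$, bounds the ramification locus $\bar R$ upstairs by intersecting with coordinate subspaces and invoking $N^n_{d,k+l}$ together with the inclusion--exclusion bound of Lemma \ref{com}, and then bounds the image $R$ downstairs by re-embedding each quotient of a coordinate subspace into a lower-dimensional weighted projective space and invoking the constants $M^{r}_{d,k}$ with $r<n$ a second time. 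You replace all of this with the averaging formula $\ctop{Z/G}=\frac{1}{|G|}\sum_{g\in G}\ctop{Z^g}$, noting that every fixed locus $Z^g$ is cut out by at most $k+n$ polynomials of degree $\leq d$, so each term is bounded by $\bar N=\max_{k\leq l\leq k+n}N^n_{d,l}$ and the average is too, uniformly in $a$. This is cleaner: it needs only $M^{n-1}_{d,k}$ rather than all $M^m_{d,k}$ with $m<n$, and it dispenses with the separate treatment of $R$ and with Lemma \ref{com}. The one point you must genuinely justify is the averaging formula itself; it is standard for finite group actions on complex algebraic varieties (via the Lefschetz trace formula, or via the stabilizer stratification you sketch, using additivity of $\chi_{top}$ and its multiplicativity under free finite quotients), and the paper's own branched-covering identity rests on the same topological facts, so nothing is lost.
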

\subsection{The existence of $N$-constant}

In this subsection we prove Proposition \ref{Ncase}. 
Given $X_I\subset\A^n$, where $I=(f_1,...,f_k)$ satisfying $\deg f_i\leq d$. Consider the natural map
\[ K[x_1,...,x_{n-1}]\hookrightarrow K[x_1,...,x_n]\rightarrow K[x_1,...,x_n]/I,\]
here $K$ is the ground field. This gives a morphism $\phi$ from $X_I$ to $\{x_n=0\}\cong\A^{n-1}$.
Fix $p=(a_1,...,a_{n-1})\in \A^{n-1}$, then \[\phi^{-1}(p)=\se{(a_1,...,a_{n-1},x_n)\in\A^n}{f_1(a_1,...,a_{n-1},x_n)=...=f_k(a_1,...,a_{n-1},x_n)=0}.\]
Thus $\phi^{-1}(p)$ can be studied via the equations $f_1$, ..., $f_k$. Now assume that the topology of the image is known, then since the fibres can be studied,
the topology of the original space $X_I$ could be computed. This is the reason that one can reduce the problem to the lower dimensional case.\par
For the induction reason, we will prove a stronger statement.

\begin{pro}\label{NN}
	Assume $N^{n-1}_{c,m}$ exists for all integers $c$ and $m$. Let $Z$ be an algebraic subset in $\A^{n-1}$ which is defined by an ideal
	$J=(g_1,...,g_l)$ and assuming $\deg g_j\leq e$ for some constant $e$.
	Then there is an fixed integer $L^n_{d,k,e,l}$ such that $|\ctop{\phi^{-1}Z}|\leq L^n_{d,k,e,l}$.
\end{pro}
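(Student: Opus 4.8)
The plan is to study $\phi^{-1}Z$ through the restricted projection $\psi\colon \phi^{-1}Z\to Z$, $\psi=\phi|_{\phi^{-1}Z}$, whose fibres each lie in a single affine line $\{x'\}\times\A^1$, where $x'=(x_1,\dots,x_{n-1})$. Writing each generator as a polynomial in $x_n$,
\[ f_i = \sum_{j=0}^{d} c_{ij}(x')\,x_n^{j}, \qquad \deg c_{ij}\leq d, \]
the fibre $\psi^{-1}(p)$ over a point $p\in Z$ is the common zero locus in $\A^1$ of the one–variable polynomials $f_i(p,x_n)=\sum_j c_{ij}(p)x_n^{j}$. Hence $\psi^{-1}(p)$ is either all of $\A^1$ (precisely when $c_{ij}(p)=0$ for every $i,j$) or a finite set of at most $d$ points, so $0\leq\ctop{\psi^{-1}(p)}\leq d$ for every $p$.

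First I would invoke the additivity of the topological Euler characteristic of complex algebraic varieties: $\ctop{}$ coincides with the compactly supported Euler characteristic, hence is additive over finite partitions into locally closed subvarieties, and the fibrewise Euler characteristic of $\psi$ is a constructible function on $Z$. Partitioning the base by the value of this function gives
\[ \ctop{\phi^{-1}Z}=\sum_{c=0}^{d} c\cdot\ctop{Z_c}, \qquad Z_c:=\se{p\in Z}{\ctop{\psi^{-1}(p)}=c}. \]
Consequently $|\ctop{\phi^{-1}Z}|\leq d\sum_{c=0}^{d}|\ctop{Z_c}|$, and it suffices to bound each $|\ctop{Z_c}|$ by a constant depending only on $n,d,k,e,l$.

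The heart of the argument is to exhibit each stratum $Z_c$ as a constructible subset of $\A^{n-1}$ of \emph{bounded complexity}. The locus where $\psi^{-1}(p)=\A^1$ is the closed set $\{c_{ij}=0\ \forall\, i,j\}$, defined by at most $k(d+1)$ polynomials of degree $\leq d$. On its complement the fibre is finite, and its cardinality equals the number of distinct common roots of $f_1(p,\cdot),\dots,f_k(p,\cdot)$, that is, the number of distinct roots of their gcd. By the theory of subresultants and elimination, the condition that this number equals a prescribed value is a Boolean combination of the vanishing and non-vanishing of finitely many subresultant polynomials in the coefficients $c_{ij}(p)$; these are polynomials in $x'$ whose number and degrees are bounded purely in terms of $d$ and $k$. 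Intersecting with the defining equations $g_1,\dots,g_l$ of $Z$ (of degree $\leq e$), each $Z_c$ becomes a Boolean combination of boundedly many algebraic subsets of $\A^{n-1}$, each cut out by a bounded number of polynomials of degree bounded in terms of $d,k,e$.

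Finally, expanding each $Z_c$ by inclusion–exclusion and using additivity of $\ctop{}$, the quantity $\ctop{Z_c}$ becomes an alternating sum, with a bounded number of terms, of Euler characteristics of closed algebraic subsets of $\A^{n-1}$, each defined by a bounded number of polynomials of bounded degree. By hypothesis every such term is bounded in absolute value by some $N^{n-1}_{c',m'}$ with $c',m'$ controlled by $d,k,e,l$, so $|\ctop{Z_c}|$, and hence $|\ctop{\phi^{-1}Z}|$, is bounded by a constant $L^n_{d,k,e,l}$ depending only on the indicated data. I expect the main obstacle to be this third step: producing the stratification $\{Z_c\}$ with defining polynomials whose number and degree are controlled \emph{uniformly}, independently of the actual coefficients $c_{ij}$ and depending only on $d,k,e,l$. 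This uniformity is exactly what elimination and subresultant theory supply, and it is precisely what lets the induction on $n$ close.
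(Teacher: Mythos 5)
Your proposal follows essentially the same route as the paper: project to $\A^{n-1}$, stratify $Z$ according to the fibre structure of this projection using resultant-type polynomials in the coefficients $c_{ij}(x')$ whose number and degrees are bounded in terms of $d$ and $k$ alone, and bound each stratum by the inductive hypothesis $N^{n-1}_{c',m'}$ via additivity of $\chi_{top}$ and inclusion--exclusion (Lemma \ref{com}). The difference is purely organizational: what you compress into a single appeal to subresultant/elimination theory --- in particular the degenerations when leading coefficients vanish or the $f_i$ have small degree in $x_n$ --- is exactly what the paper's generalized resultant $T_{g_1,\dots,g_k}$, the strata $Z_-$, $Z_+$, $Z_0$, $Z_1$, and the nested inductions on $\sum_i\deg f_i$, $\min s_0$ and $r$ carry out explicitly.
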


We divide this subsection into four parts. In the first part we study the common roots of a collection of polynomials, which is the main tool
we will use to study the fibre of the projection $\phi$. After the tool is developed, 
we could get much information between the points in $H$ and its fibre in $\A^n$, provided the degree of $f_1$, ... $f_k$ do not be too small.
This is the second part of this subsection. In the third part we deal with the case when the degree of $f_i$ is too small for some $i$ so that above technique does not work.
Finally in the last part we run a complicated induction and prove Proposition \ref{NN}.

\subsubsection{The generalized resultant}
We generalize the idea of the \emph{resultant} in classical algebra to
describe the condition that a collection of polynomials has a common zero.\par
Let $g_1$, ..., $g_k\in K[x]$ be one variable polynomials with $\deg g_i=d_i>0$.
One write $g_i=\sum_ja_{i,j}x^j$ and we will denote
\[A^i_{g_1,...,g_k}=\begin{pmatrix}a_{i,d_i} & & & \\ a_{i,d_i-1} & a_{i,d_i} &  & \\
	\vdots & \vdots & \ddots & a_{i,d_i} \\ a_{i,0} & \vdots & \ddots & \vdots \\
	& a_{i,0} &  & \vdots \\ & & & a_{i,0} \end{pmatrix} \]
which is a $(d_i+d_k)\times d_k$ matrix satisfying
	\[(A^i_{g_1,...,g_k})_{pq}=\left\lbrace
	\begin{matrix} a_{i,d_i-q+p} & 0\leq p-q\leq d_i \\ 0 & \mbox{otherwise}
	\end{matrix}\right..\]
Also define
\[B^i_{g_1,...,g_k}=\begin{pmatrix}a_{k,d_k} & & & \\ a_{k,d_k-1} & a_{k,d_k} &  & \\
	\vdots & \vdots & \ddots & a_{k,d_k} \\ a_{k,0} & \vdots & \ddots & \vdots \\
	& a_{k,0} &  & \vdots \\ & & & a_{k,0} \end{pmatrix} \]
be a $(d_i+d_k)\times d_i$ matrix such that
	\[(B^i_{g_1,...,g_k})_{pq}=\left\lbrace
	\begin{matrix} a_{k,d_k-q+p} & 0\leq p-q\leq d_k \\ 0 & \mbox{otherwise}
	\end{matrix}\right..\]
Consider
\[ T_{g_1,...,g_k}=\begin{pmatrix} A^1_{g_1,...,g_k} & B^1_{g_1,...,g_k} & 0 & \cdots & 0 \\ 
	A^2_{g_1,...,g_k} & 0 & B^2_{g_1,...,g_k} & 0 & \vdots \\ \vdots & \vdots & 0 & \ddots & 0 \\
	A^{k-1}_{g_1,...,g_k} & 0 & \cdots & 0 & B^{k-1}_{g_1,...,g_k} \end{pmatrix},\]
which is a $(d_1+...+d_{k-1}+(k-1)d_k)\times(d_1+...+d_k)$ matrix.
\begin{lem}\label{res} The polynomials $g_1$, ..., $g_k$ have common zeros 
	if and only if the matrix $T_{g_1,...,g_k}$ is not full rank. Moreover, the number of the common zeros is
	exactly the nullity of $T_{g_1,...,g_k}$, counted with multiplicity.
\end{lem}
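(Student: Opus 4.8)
The plan is to recognize $T_{g_1,\ldots,g_k}$ as the matrix of a Sylvester-type linear map and to identify its kernel directly. Regard a column vector as a tuple $(P,Q_1,\ldots,Q_{k-1})$, where $P\in K[x]_{<d_k}$ is a polynomial of degree less than $d_k$ (the first $d_k$ entries) and each $Q_i\in K[x]_{<d_i}$ has degree less than $d_i$. The block $A^i_{g_1,\ldots,g_k}$ is precisely the matrix, in the monomial bases, of multiplication by $g_i$ as a map $K[x]_{<d_k}\to K[x]_{<d_i+d_k}$, while $B^i_{g_1,\ldots,g_k}$ is the matrix of multiplication by $g_k$ as a map $K[x]_{<d_i}\to K[x]_{<d_i+d_k}$. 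Because the off-diagonal $B$-blocks vanish, the $i$-th block row of $T_{g_1,\ldots,g_k}$ sends $(P,Q_1,\ldots,Q_{k-1})$ to $g_iP+g_kQ_i$, so that
\[
\ker T_{g_1,\ldots,g_k}=\{(P,Q_1,\ldots,Q_{k-1}) : g_iP+g_kQ_i=0 \text{ for } i=1,\ldots,k-1\}.
\]

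First I would verify this identification of the blocks with the multiplication matrices; this is the routine but essential translation, after which the whole statement becomes an elementary fact about one-variable polynomials. Observe that a kernel element is determined by $P$ alone, since $Q_i=-g_iP/g_k$ is forced; conversely a given $P$ yields a kernel element exactly when $g_k\mid g_iP$ for every $i$, the degree bound $\deg Q_i<d_i$ being automatic from $\deg P<d_k$. Hence the nullity equals the dimension of the space of $P\in K[x]_{<d_k}$ satisfying $g_k\mid g_iP$ for all $i<k$.

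To compute this dimension, set $m_i=g_k/\gcd(g_i,g_k)$, so that $g_k\mid g_iP$ is equivalent to $m_i\mid P$, and the constraints collapse to $\mathrm{lcm}(m_1,\ldots,m_{k-1})\mid P$. Writing $M=\mathrm{lcm}(m_1,\ldots,m_{k-1})$, the valid $P$ are the multiples of $M$ of degree $<d_k$, a space of dimension $d_k-\deg M$. A factor-by-factor count over $\bar{K}$ then shows, for each root $\alpha$ of $g_k$ of multiplicity $e_\alpha$, that the multiplicity of $\alpha$ in $M$ is $e_\alpha-\min(e_\alpha,f_\alpha)$, where $f_\alpha=\min_{i<k}\mathrm{mult}_\alpha(g_i)$; summing over $\alpha$ gives $d_k-\deg M=\sum_\alpha\min(e_\alpha,f_\alpha)=\sum_\alpha\min_{i\le k}\mathrm{mult}_\alpha(g_i)=\deg\gcd(g_1,\ldots,g_k)$. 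Since the number of common zeros counted with multiplicity is exactly $\deg\gcd(g_1,\ldots,g_k)$, the nullity assertion follows.

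Finally, for $k\ge2$ the matrix $T_{g_1,\ldots,g_k}$ has at least as many rows as columns, so being full rank is equivalent to injectivity; thus $T_{g_1,\ldots,g_k}$ fails to be full rank precisely when its kernel is nonzero, i.e. when $\deg\gcd(g_1,\ldots,g_k)>0$, which is exactly the condition that $g_1,\ldots,g_k$ share a common zero. I expect the main obstacle to be purely bookkeeping: carefully matching the index conventions of the blocks $A^i$ and $B^i$ with the stated multiplication maps, and tracking the multiplicity count through the $\gcd$/$\mathrm{lcm}$ over the algebraic closure so that the final equality with $\deg\gcd$ comes out cleanly. The case $k=2$, where $T_{g_1,g_2}$ is the classical Sylvester matrix, serves as a useful sanity check.
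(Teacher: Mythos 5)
Your proposal is correct, and its core is the same as the paper's: both proofs interpret $T_{g_1,\ldots,g_k}$ as the Sylvester-type map sending a tuple of polynomials (your $(P,Q_1,\ldots,Q_{k-1})$, the paper's $(h_k,-h_1,\ldots,-h_{k-1})$) to the coefficient vectors of $g_iP+g_kQ_i$, so that the kernel is identified with solutions of $h_ig_k=h_kg_i$ with $\deg h_i<d_i$. Where you diverge is in the nullity count. The paper exhibits an explicit basis of the kernel, namely the vectors corresponding to $\{x^{j-1}\alpha_i\}$ with $\alpha_i=g_i/\gcd(g_1,\ldots,g_k)$, and then proves every kernel element lies in their span via a divisibility argument culminating in $\alpha_k=\mathrm{lcm}(\gamma_1,\ldots,\gamma_{k-1})$. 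You instead observe that a kernel element is determined by $P$ alone and compute the dimension of the admissible $P$'s directly as $d_k-\deg\mathrm{lcm}(m_1,\ldots,m_{k-1})$, evaluating this by a root-multiplicity count over $\bar K$; your identity $d_k-\deg M=\deg\gcd(g_1,\ldots,g_k)$ is exactly the paper's $\alpha_k=\mathrm{lcm}(\gamma_i)$ in disguise (your $m_i$ is the paper's $\gamma_i$). Your organization is arguably cleaner, since parametrizing by $P$ collapses the kernel to a one-parameter family and makes the degree bounds on the $Q_i$ automatic, whereas the paper must separately verify linear independence of its basis and run the span argument; the paper's version has the mild advantage of staying entirely within gcd manipulations over $K$ without passing to the algebraic closure (though, as you note, nullity and $\deg\gcd$ are insensitive to field extension, so this costs nothing). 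Your closing remark that full rank means injectivity because $T$ has at least as many rows as columns for $k\geq2$ is a point the paper leaves implicit, and it is worth making explicit.
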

\begin{proof}
	\begin{cl}$g_1$, ..., $g_k$ has common zero if and only if there is polynomials
	$h_1$, ..., $h_k$ such that $\deg h_i<\deg g_i$ and $h_ig_k=h_kg_i$ for all $i<k$.\end{cl}
	Indeed, if the polynomials has common zeros, then they have a common factor in the polynomial ring
	$K[x]$. So we may write $g_i=bh_i$, where $b=\gcd(g_1,...,g_k)$ and then $\deg h_i<\deg g_i$
	and $h_ig_k=h_kg_i$. Conversely, assume $h_ig_k=h_kg_i$ for some $h_1$, ..., $h_k$ with
	$\deg h_i<\deg g_i$. If $g_k$ and $h_k$ has no common root, then every root of $g_k$ is a root of
	$g_i$ for all $i$ thanks to the relation $h_ig_k=h_kg_i$. Otherwise let $l=\gcd(g_k,h_k)$ and define $\bar{g}_k=g_k/l$, $\bar{h}_k=h_k/l$.
	Then $\deg \bar{g}_k>0$. We still have the relation $h_i\bar{g}_k=\bar{h}_kg_i$ and $\gcd(\bar{g}_k,\bar{h}_k)=1$. As the previous discussion the root of $\bar{g}_k$
	will be a root of $g_i$ for all $i$.\par 
	Thus to prove the lemma, one only need to find $h_i$ satisfied the condition above. Let
	\[ v=\left( r_{k,d_k-1},...,r_{k,0},-r_{1,d_1-1},...,-r_{1,0},
		-r_{2,d_2-1},...,-r_{k-1,d_{k-1}-1},...,-r_{{k-1},0} \right)^t\]
	be a column vector in $K^{d_1+...+d_k}$, and let $h_i=\sum_jr_{i,j}x^j$, then one can check
	that the condition $h_ig_k=h_kg_i$ is exactly the linear condition $T_{g_1,...,g_k}v=0$. Hence
	$g_1$, ..., $g_k$ has common zeros if and only if $T_{g_1,...,g_k}$ is not full rank.\par
	Now notice that if $b=\gcd(g_1,...,g_k)$ and let $\alpha_i=g_i/b$, then the number of common
	zeros of $g_1$, ..., $g_k$ is exactly $\deg b$. For $1\leq j\leq\deg b$,
	Let $v_j$ be the vector in $K^{d_1+...+d_k}$ corresponds to the collection of polynomials
	$\{x^{j-1}\alpha_i\}_{i=1}^k$, then $v_j$ is lying on the null space of $M$ and $v_1$, ..., $v_{\deg b}$
	are linearly independent.\par
	Conversely assume $T_{g_1,...,g_k}w=0$ for some $w\in K^{d_1+...+d_k}$, then
	$w$ corresponds to a collection of polynomials $h_1$, ..., $h_k$ satisfying $h_ig_k=h_kg_i$
	and $\deg h_i<\deg g_i$. We claim that $\alpha_i$ divides $h_i$ for all $i$.\par
	Let $c_i=\gcd(g_i,g_k)$, $g_i=c_i\beta_i$ and $g_k=c_i\gamma_i$. The relation $h_ig_k=h_kg_i$
	yields $h_i\gamma_i=h_k\beta_i$. Since $\gcd(\beta_i,\gamma_i)=1$ we have $\gamma_i$ divides $h_k$
	for all $i$, hence $l.c.m.(\gamma_1,...,\gamma_{k-1})$ divides $h_k$. On the other hand we have the
	relation $g_k=b\alpha_k=c_i\gamma_i$. Note that $b=\gcd(c_1,...,c_{k-1})$, hence $\gamma_i$ divide
	$\alpha_k$ for all $i$ and so $l.c.m.(\gamma_1,...,\gamma_{k-1})$ divides $\alpha_k$.
	If $\alpha_k\neq l.c.m.(\gamma_1,...,\gamma_{k-1})$ then $\alpha_k/l.c.m.(\gamma_1,...,\gamma_{k-1})$
	will divide $c_i$ for all $i$, contradict to $b=\gcd(c_1,...,c_{k-1})$. Thus
	$\alpha_k=l.c.m.(\gamma_1,...,\gamma_{k-1})$ divides $h_k$. Finally the relation
	$h_ig_k=h_kg_i$ gives that $h_i\alpha_k=h_k\alpha_i$. Since $\alpha_k$ divide $h_k$, we have
	$\alpha_i$ divide $h_i$ for all $i$.\par
	Now	$\deg h_i<\deg g_i=\deg b+\deg\alpha_i$, hence $h_i=h'\alpha_i$ for some polynomial $h'$ and $\deg h'<\deg b$.
	Thus $w$ is lying on the subspace generated by $v_1$, ..., $v_{\deg b}$ and then $null(T_{g_1,...,g_k})=\deg b$ and
	the last part of the lemma is proved.
\end{proof}

\begin{lem}\label{root}
	Assume $\deg g_i>1$ for all $i$. Let \[s_0=null(T_{g_1,...,g_k})  ;\quad s_1=null(T_{g_1,...,g_k,g'_1,...,g'_k}),\]
	here $g'_i$ denotes the formal derivative of polynomials.
	Then the number of distinct common roots of $g_1$, ..., $g_k$ is exactly $s_0-s_1$.
\end{lem}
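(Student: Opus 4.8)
We have $s_0 = \operatorname{null}(T_{g_1,\dots,g_k})$ counting common roots of $g_1,\dots,g_k$ with multiplicity, and $s_1$ counts common roots of $g_1,\dots,g_k,g_1',\dots,g_k'$ with multiplicity. The claim is the number of **distinct** roots is $s_0 - s_1$.

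Let me think about this. Let me figure out the approach.The plan is to reduce Lemma \ref{root} to Lemma \ref{res}, which already gives a geometric interpretation of the nullity of the $T$-matrices: the nullity equals the number of common roots of the defining polynomials \emph{counted with multiplicity}. So by Lemma \ref{res} we have that $s_0=\deg b$ where $b=\gcd(g_1,\dots,g_k)$, and $s_1=\deg \tilde b$ where $\tilde b=\gcd(g_1,\dots,g_k,g_1',\dots,g_k')$. The entire task is therefore to show that $\deg b-\deg\tilde b$ equals the number of \emph{distinct} common roots of $g_1,\dots,g_k$; equivalently, that $b/\tilde b$ is (up to a unit) exactly the radical of $b$, i.e.\ the squarefree polynomial vanishing precisely at the common roots with each root appearing to the first power.

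First I would record the factorization of $b$. Working over the algebraically closed ground field, write $b=\prod_{j=1}^{t}(x-\lambda_j)^{m_j}$ where $\lambda_1,\dots,\lambda_t$ are the distinct common roots and $m_j\geq 1$ their multiplicities; then $\deg b=\sum_j m_j$ and the number of distinct roots is $t$. The content of the lemma is then the purely algebraic identity
\[
\deg b-\deg\tilde b \;=\; t \;=\; \deg b-\sum_{j=1}^{t}(m_j-1),
\]
so it suffices to prove $\deg\tilde b=\sum_j(m_j-1)$, i.e.\ that $\tilde b=\prod_{j=1}^{t}(x-\lambda_j)^{m_j-1}$ up to a constant.

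The key step is to identify $\tilde b=\gcd(g_1,\dots,g_k,g_1',\dots,g_k')$ precisely. I would argue in two directions. A point $\lambda$ is a common root of all the $g_i$ and all the $g_i'$ if and only if $\lambda$ is a root of $b$ (so that every $g_i$ has the factor $(x-\lambda)$) and moreover each $g_i$ has $(x-\lambda)$ as a \emph{multiple} root. Writing $g_i=b\,\alpha_i$ with $\gcd(\alpha_1,\dots,\alpha_k)=1$ as in the proof of Lemma \ref{res}, one computes $g_i'=b'\alpha_i+b\alpha_i'$; since the $\alpha_i$ have no common zero, the common factor of all the $g_i'$ that also divides $b$ is exactly $\gcd(b,b')$, which is the standard derivative-gcd formula giving $\gcd(b,b')=\prod_j(x-\lambda_j)^{m_j-1}$. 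The care needed here is to verify that introducing the $\alpha_i$ terms does not create or destroy any common factor: because $\gcd(\alpha_1,\dots,\alpha_k)=1$, no root of $\alpha_i$ is common to all the $g_i'$ beyond what already comes from $b$, and conversely every factor of $\gcd(b,b')$ divides each $g_i'=b'\alpha_i+b\alpha_i'$. This pins down $\tilde b=\gcd(b,b')=\prod_j(x-\lambda_j)^{m_j-1}$, whence $\deg\tilde b=\sum_j(m_j-1)$ and the lemma follows.

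The main obstacle I anticipate is the bookkeeping in the previous paragraph: one must confirm that $\gcd(g_1,\dots,g_k,g_1',\dots,g_k')$ really collapses to $\gcd(b,b')$ and not to something smaller coming from accidental common factors of the $\alpha_i$ with $b'$. The clean way to handle this is to work root-by-root over the algebraic closure rather than manipulating gcd's symbolically: for each candidate root $\lambda$, the multiplicity of $\lambda$ in $\tilde b$ is $\min$ over all generators of the order of vanishing, and one checks directly that this minimum is $m_j-1$ at $\lambda_j$ and $0$ elsewhere, using $\deg g_i>1$ to ensure the derivatives $g_i'$ are not identically zero in characteristic issues. Assuming the ground field has characteristic zero (or is large enough), this local computation is routine and yields the stated count $s_0-s_1=t$.
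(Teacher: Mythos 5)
Your proposal is correct and follows essentially the same route as the paper: both reduce to Lemma \ref{res} to read off $s_0=\deg b$ and $s_1=\deg\gcd(g_1,\dots,g_k,g_1',\dots,g_k')$, both identify the latter gcd with $\gcd(b,b')$ via $g_i'=b'h_i+bh_i'$ together with $\gcd(h_1,\dots,h_k)=1$, and both conclude with the factorization $\gcd(b,b')=\prod_j(x-a_j)^{r_j-1}$. The only point you make that the paper leaves implicit is the characteristic-zero (or large characteristic) hypothesis needed for the derivative-gcd formula, which is harmless here since the paper works over $\Cc$.
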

\begin{proof}
	Let $b=\gcd(g_1,...,g_k)$. We will show that $g.c.d(b,b')=\gcd(g_1,...,g_k,g'_1,...,g'_k)$. Indeed, if we write
	$g_i=bh_i$, then $g'_i=b'h_i+bh'_i$, hence $g.c.d(b,b')$ divides $g_i$ and $g'_i$ for all $i$ and then $\gcd(b,b')$ divides
	$\gcd(g_1,...,g_k,g'_1,...,g'_k)$. Conversely, if $p$ is a polynomial
	divides $g_i$ and $g'_i$ for all $i$, then $p$ will divide $\gcd(g_1,...,g_k)=b$. The condition $p$ divides $g'_i$ implies $p$ divides
	$b'h_i$ for all $i$. However, $\gcd(h_1,...,h_k)=1$. Thus $p$ divides $b'$ and hence $p$ divides $\gcd(b,b')$. That is,
	$\gcd(g_1,...,g_k,g'_1,...,g'_k)$ divides $\gcd(b,b')$.\par
	Now write $b=(x-a_1)^{r_1}...(x-a_m)^{r_m}$, then the number of distinct common roots of $g_1$, ..., $g_k$ is $m$. On the other hand,
	\[b'=\left((x-a_1)^{r_1-1}...(x-a_m)^{r_m-1}\right)\left(\sum_ir_i(x-a_1)...(x-a_{i-1})(x-a_{i+1})...(x-a_m)\right).\]
	Hence $\gcd(b,b')=(x-a_1)^{r_1-1}...(x-a_m)^{r_m-1}$. By Lemma \ref{res}, $s_0=\deg b=r_1+...+r_m$ and
	$s_1=\deg(\gcd(b,b'))=(r_1-1)+...+(r_m-1)=r_1+...+r_m-m$. A conclusion is that $s_0-s_1=m$, as we want.
\end{proof}
\subsubsection{The geometry of the projection map}
In this part we study the fibre of $\phi:X_I\rightarrow \A^{n-1}$.
We will view $f_i$ as a polynomial in $x_n$ and we will
denote $f'_i=\frac{\partial}{\partial x_n}f_i$. Let
\[ T^0=\sep{T_{f_1,f'_1}}{T_{f_1,...,f_k}}\quad\quad T^1=\sep{T_{f_1,f'_1,f''_1}}{T_{f_1,...,f_k,f'_1,...,f'_k}}\]
provided that all the polynomials are non-constant. Note that $T^0$ and $T^1$ are matrices with all entries being a polynomial in $K[x_1,...,x_{n-1}]$.\par
\begin{cov}For $j=0$, $1$, we say the \emph{condition $(A^j)$ are satisfied} if $T^j$ is defined. That is, $\deg f_i>j$ (resp. $j+1$) for all $i$ if $k>1$ (resp. $k=1$). \end{cov}
When ($A^j$) is satisfied, one could study the fiber of $\phi$ via the nullity of $T^j$. There are three possibility of the fiber of $\phi$: empty, finite points or a $\A^1$.
The fiber is a $\A^1$ at a point $P\in\A^{n-1}$ if and only if all $f_i$ vanishes at $P$, which is easy to detect. The main question is to find the locus on $\A^{n-1}$ such that
the pre-image of $\phi$ is finite, and on such locus one should find the number of points in the fiber.\par
Assume ($A^0$) one could solve the first question (cf. Lemma \ref{M0f}, Lemma \ref{M0nf}). If ($A^1$) holds and assuming more conditions one could count the cardinality
of the fiber (cf. Lemma \ref{M11}).

\begin{lem}\label{M0f}
	Assume ($A^0$). Fix $p\in \A^{n-1}$ and assume that $T^0(p)$ is
	full rank. Then \[|\phi^{-1}(p)|=\sep{\deg f_1}{0}\]
\end{lem}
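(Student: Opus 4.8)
The plan is to identify the fibre $\phi^{-1}(p)$ with the set of common roots, in the variable $x_n$, of the one–variable specialized polynomials $g_i(x_n):=f_i(p,x_n)\in K[x_n]$, $i=1,\dots,k$. A point of $\phi^{-1}(p)$ is exactly a value $\beta$ of $x_n$ with $g_i(\beta)=0$ for all $i$, so $|\phi^{-1}(p)|$ is the number of distinct common roots of the $g_i$. Since condition $(A^0)$ guarantees that $T^0$ is defined, the matrix $T^0(p)$ is the matrix $T$ of Lemma \ref{res} evaluated at $p$; the key bookkeeping throughout is that its nullspace corresponds, via the proof of Lemma \ref{res}, to tuples $(h_1,\dots)$ bounded by the \emph{formal} degrees of the $f_i$ in $x_n$, and it is this formal/actual degree distinction that lets full rank control the fibre. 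I would treat the two cases $k>1$ and $k=1$ separately.

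For $k>1$ one has $T^0(p)=T_{g_1,\dots,g_k}$, and here I only need one half of Lemma \ref{res}. If the $g_i$ have a common root, then $b=\gcd(g_1,\dots,g_k)$ has positive degree and the tuple $(g_1/b,\dots,g_k/b)$ is a nonzero kernel vector of $T^0(p)$, since each quotient $g_i/b$ has degree strictly below the formal degree of $f_i$. Contrapositively, full rank of $T^0(p)$ forces the $g_i$ to have no common root, so $\phi^{-1}(p)=\emptyset$ and $|\phi^{-1}(p)|=0$. The degenerate possibility that every $g_i$ vanishes identically, i.e. that the fibre is a whole $\A^1$, is excluded simultaneously, since then $T^0(p)$ is the zero matrix and hence not full rank.

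For $k=1$ one has $T^0(p)=T_{g_1,g_1'}$ with $g_1'=\frac{\partial}{\partial x_n}f_1(p,\cdot)$. By Lemma \ref{res} the nullity of $T^0(p)$ equals $\deg\gcd(g_1,g_1')$, so full rank forces $g_1$ to be separable; over $\Cc$ a separable polynomial has exactly $\deg g_1$ distinct roots. It then remains to show that full rank also forces the leading coefficient $a_{1,d_1}(p)$, the coefficient of $x_n^{\deg f_1}$, to be nonzero, so that $\deg g_1=\deg f_1$ and therefore $|\phi^{-1}(p)|=\deg f_1$, as claimed.

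The step I expect to be the main obstacle is precisely this last point: controlling the \emph{degree drop} that occurs when $a_{1,d_1}(p)$ vanishes upon specialization, since Lemma \ref{res} is phrased for polynomials whose actual degree equals the formal degree. The clean way around it is to exploit the slack between formal and actual degrees. If $a_{1,d_1}(p)=0$, then both $g_1$ and $g_1'$ have degree strictly below their formal degrees $d_1$ and $d_1-1$, so the pair $(h_1,h_2)=(g_1,g_1')$ satisfies the required bounds $\deg h_1<d_1$ and $\deg h_2<d_1-1$ together with the defining relation $h_1g_1'=h_2g_1$; this is a nonzero kernel vector, so $T^0(p)$ cannot be full rank. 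Hence full rank does force $a_{1,d_1}(p)\neq0$, completing the argument. The same slack phenomenon is what makes the kernel construction in the case $k>1$ insensitive to whether the leading coefficients survive specialization, so no separate degenerate analysis is needed there.
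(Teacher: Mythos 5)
Your proposal is correct and follows essentially the same route as the paper: both reduce the fibre count to Lemma \ref{res} and both must rule out the degeneration where the leading coefficient of $f_i$ vanishes at $p$. The only real difference is in how that degeneration is excluded --- the paper notes that a vanishing leading coefficient zeroes out the first row of the square matrix $T^0$ (for $k=1$) and runs a short case analysis (for $k>1$), whereas you exhibit explicit kernel vectors $(g_1,g_1')$ and $(g_1/b,\dots,g_k/b)$, which has the minor advantage of being insensitive to the formal-versus-actual degree drop that Lemma \ref{res}, as literally stated, does not address.
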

\begin{proof}
	Assume $k=1$. If the leading coefficient vanishes over $p$, then the first row of $T^0$ is always zero. Since $T^0$ is a square matrix, this implies 
	$T^0$ is not full rank. Hence we may assume the leading coefficient do not vanishing at $p$, so both $f_1$ and $f'_1$ are non-constant.
	Using Lemma \ref{res}, we see that $T^0(p)$ is full rank implies $f_1$ and $f'_1$ consist no common zero. Hence $f_1$ consists no multiple roots over $p$,
	so $|\phi^{-1}(p)|=\deg f_1$.\par
	Now assume $k>1$. First assume $f_i$ is constant over $p$ for some $i$. Then if $f_i$ is identically zero, $T^0$ can not be full rank.
	On the other hand, if $f_i$ is a non-zero constant, then $\phi^{-1}(p)$ is always empty so the conclusion is always true.
	Finally assume $f_i$ is non-constant for all $i$, then for any $p\in H$, $\phi^{-1}(p)$ is non-empty only if $f_1$, ..., $f_k$ admit common zeros. By Lemma \ref{res},
	this implies the matrix $T^0$ is not full rank.
\end{proof}
\begin{lem} \label{M0nf}
	Assume ($A^0$). Given $p\in \A^{n-1}$ and assume that $T^0(p)$ is not full rank. Assume further that the leading coefficient of $f_i$
	do not vanish at $p$ for all $i$. Then if $k>1$, we have that $p$ is contained in the image of $\phi$. For $k=1$,
	one can say that $\phi$ is a finite morphism near $p$ and $p$ is lying on the ramification locus.
\end{lem}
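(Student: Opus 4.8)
The plan is to deduce both conclusions directly from the generalized resultant of Lemma \ref{res}, applied fibrewise to the one-variable polynomials $f_i(p,x_n)$. The one point that must be settled before Lemma \ref{res} can be invoked is that the evaluation $T^0(p)$ really is the $T$-matrix attached to these restricted polynomials, and this is exactly where the hypothesis on the leading coefficients enters. Writing $f_i=\sum_j a_{i,j}x_n^j$ with $a_{i,j}\in K[x_1,\dots,x_{n-1}]$ and $d_i=\deg_{x_n}f_i$, the entries of $T^0$ are the various $a_{i,j}$, so $T^0(p)$ has entries $a_{i,j}(p)$. Since the leading coefficient $a_{i,d_i}$ does not vanish at $p$, the restriction $f_i(p,x_n)$ still has degree exactly $d_i$; hence $T^0(p)$ coincides with the generalized-resultant matrix of $\{f_i(p,x_n)\}_i$ (for $k=1$, of the pair $f_1(p,x_n),f_1'(p,x_n)$). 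With this identification, ``$T^0(p)$ not full rank'' becomes, by Lemma \ref{res}, ``the restricted polynomials have a common zero.''

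For $k>1$ this already closes the argument: Lemma \ref{res} yields some $a\in K$ with $f_i(p,a)=0$ for every $i$, so $(p,a)\in X_I$ and $p=\phi(p,a)$ lies in the image of $\phi$.

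For $k=1$ there are two things to prove. First, finiteness near $p$: on an open neighbourhood $U\ni p$ the leading coefficient $a_{1,d_1}$ is invertible, so after dividing by this unit $x_n$ satisfies a monic equation over $\Oo(U)$; thus $x_n$ is integral over $\Oo(U)$ and $\phi\colon\phi^{-1}(U)\to U$ is finite. Second, ramification: Lemma \ref{res} applied to $f_1(p,x_n)$ and $f_1'(p,x_n)$ shows these share a root, i.e.\ $f_1(p,x_n)$ has a repeated root. Since over $U$ the polynomial $f_1$ has constant degree $d_1$ in $x_n$, the generic fibre consists of $d_1$ distinct points while the fibre over $p$ has strictly fewer; hence $p$ lies on the ramification locus of $\phi$.

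The genuinely routine part is the coefficient bookkeeping of the first paragraph. The step that needs care is the $k=1$ case: one must check that $T^0(p)$ equals the resultant matrix of $f_1(p,x_n)$ \emph{and} $f_1'(p,x_n)$, which requires that $f_1'$ not drop degree at $p$. In characteristic zero its leading coefficient is $d_1\,a_{1,d_1}$, nonzero at $p$ precisely because $a_{1,d_1}(p)\neq0$, so the hypothesis on $f_1$ alone suffices; this is where $K$ being of characteristic zero is used. After that, the only remaining subtlety is to correctly match the failure of full rank with the presence of a repeated root, and hence with ramification.
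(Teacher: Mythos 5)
Your proposal is correct and follows essentially the same route as the paper: specialize to $p$, identify non-fullness of $T^0(p)$ with a common zero of the restricted polynomials via Lemma \ref{res}, and for $k=1$ get finiteness from the non-vanishing leading coefficient and ramification from the resulting multiple root. Your extra bookkeeping (checking that the degree of $f_i$, and of $f_1'$ in the $k=1$ case, does not drop at $p$, so that $T^0(p)$ really is the resultant matrix of the specialized polynomials) is a point the paper passes over silently, and your integrality argument for finiteness is slightly more careful than the paper's quasi-finiteness remark, but neither changes the substance of the argument.
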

\begin{proof}
	First assume $k>1$. The hypothesis implies that $f_1$, ..., $f_k$ is non-constant polynomial in $x_n$ over $p$. By Lemma \ref{res},
	$T^0$ is not full rank at $p$ if and only if $f_1$, ..., $f_k$ admits a common zero, say $\xi\in K$. If we write $p=(a_1,...,a_{n-1})$,
	then the point $(a_1,...,a_{n-1},\xi)$ is lying on $X_I$ and is mapped to $p$ by $\phi$. Hence $p$ is contained in the image of $\phi$.\par
	For the $k=1$ case, note that $T^0$ is defined implies $\deg f_1>1$. By assumption, the leading coefficient of $f_1$ do not vanish at $p$,
	hence it do not vanish on a neighborhood $U$ of $p$. We see that for any point $q\in U$ we have $f_1$ is a polynomial of positive degree
	in $x_n$ over $q$, so the pre-image of $\phi$ consists only finitely many points and so $\phi$ is a finite morphism on $U$. Now the
	condition that $T^0(p)$ is not full rank implies $f_1$ consists multiple root over $p$, hence $p$ is lying in the ramification locus of $\phi$.
\end{proof}

Now let $Z\subset \A^{n-1}$ be a subset contained in the image of $\phi$. For $p\in Z$ we will denote $r(p)=|\phi^{-1}(p)|$ and $r(Z)=\max_{p\in Z}\{r(p)\}$.
Also define $s_0(p)=null(T^0(p))$ and $s_1(p)=null(T^1(p))$. What we want to do is to find the locus which consists of the points
$p\in Z$ such that $r(p)\neq r(Z)$. Such point could be determined using the number $s_0$ and $s_1$, under suitable conditions.
\begin{lem}\label{M11}
	Fix $Z\subset \A^{n-1}$ be any subset. Assume that the leading coefficient of $f_i$ do not vanish over $Z$ for all $i$. When $k=1$ (resp. $k>1$) assume ($A^0$)(reps. ($A^1$)).
	Then for any $p\in Z$ we have
	\begin{enumerate}[(i)]
	\item Assume $k=1$, then $r(p)=\deg f_1(p)-s_0(p)$.
	\item Assume $k>1$, then $r(p)=s_0(p)-s_1(p)$.
	\end{enumerate}
\end{lem}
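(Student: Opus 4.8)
The plan is to reduce both statements to the generalized resultant computations already established in Lemma \ref{res} and Lemma \ref{root}, applied fiberwise to the specialized polynomials $f_i(p)\in K[x_n]$. The essential preliminary observation is that the hypothesis that the leading coefficient of each $f_i$ does not vanish over $Z$ guarantees $\deg_{x_n}f_i(p)=\deg_{x_n}f_i$ for every $p\in Z$. Consequently the entries of $T^0$ and $T^1$, which are polynomials in $x_1,\dots,x_{n-1}$, specialize at $p$ to \emph{exactly} the resultant-type matrices of the polynomials $f_i(p)$ built with their genuine degrees; that is, $T^0(p)=T_{f_1(p),f'_1(p)}$ (resp. $T_{f_1(p),\dots,f_k(p)}$) and likewise for $T^1(p)$. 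Hence $s_0(p)$ and $s_1(p)$ genuinely compute the nullities appearing in Lemma \ref{res} and Lemma \ref{root}. This identification is the step where the leading-coefficient assumption is indispensable, since otherwise the degree could drop at $p$, the top row of the matrix would become identically zero, and the nullity would no longer record the gcd data correctly.

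Granting this, I would first treat $k=1$. Here $s_0(p)=\mathrm{null}(T_{f_1(p),f'_1(p)})$, which by Lemma \ref{res} equals the number of common zeros of $f_1(p)$ and $f'_1(p)$ counted with multiplicity, i.e. $\deg\gcd(f_1(p),f'_1(p))$. Writing $f_1(p)=c\prod_j(x_n-a_j)^{r_j}$ with the $a_j$ distinct, a direct differentiation gives $\gcd(f_1(p),f'_1(p))=\prod_j(x_n-a_j)^{r_j-1}$, so that $s_0(p)=\sum_j(r_j-1)=\deg f_1(p)-m$, where $m$ is the number of distinct roots. Since $r(p)=|\phi^{-1}(p)|$ is precisely this number $m$ of distinct roots of $f_1(p)$ in $x_n$, rearranging yields $r(p)=\deg f_1(p)-s_0(p)$. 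This is the same elementary computation already carried out inside the proof of Lemma \ref{root}, now specialized to a single polynomial. Note that condition $(A^0)$ forces $\deg f_1>1$, so over the algebraically closed ground field $f_1(p)$ always has a root and $\phi^{-1}(p)$ is never empty in this case.

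For $k>1$ the argument is even more immediate. The hypothesis $(A^1)$ together with degree preservation guarantees $\deg f_i(p)>1$ for all $i$, which is exactly the standing assumption of Lemma \ref{root}, and the formal derivative in $x_n$ commutes with specialization in the remaining variables, so $f'_i(p)=(f_i(p))'$. Applying Lemma \ref{root} to the collection $f_1(p),\dots,f_k(p)$ then directly gives that the number of distinct common roots equals $s_0(p)-s_1(p)$. As $r(p)$ is by definition this number of distinct common roots of the $f_i(p)$ in $x_n$, we obtain $r(p)=s_0(p)-s_1(p)$.

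I expect the only genuinely delicate point to be the verification that specialization commutes with the matrix construction, namely the identification $T^j(p)=T_{f_1(p),\dots}$ with the correct degree data; once this is secured, everything else is a formal consequence of the two resultant lemmas. It also remains to record the degenerate case in which $p$ lies outside the image of $\phi$ (possible only for $k>1$): there $\phi^{-1}(p)=\emptyset$ and $r(p)=0$, while the absence of a common zero forces $T^0(p)$ to be full rank by Lemma \ref{res}, whence $s_0(p)=0$ and a fortiori $s_1(p)=0$, so that the formula $r(p)=s_0(p)-s_1(p)$ returns $0$ consistently.
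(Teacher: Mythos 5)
Your proposal is correct and follows essentially the same route as the paper: for $k>1$ it invokes Lemma \ref{root} directly, and for $k=1$ it performs the same explicit computation of $s_0(p)=\deg\gcd(f_1(p),f_1'(p))$ from the root multiplicities of $f_1(p)$. Your additional remarks --- that the non-vanishing of leading coefficients ensures $T^j(p)$ specializes to the resultant matrix of the $f_i(p)$ with the correct degree data, and the consistency check when $\phi^{-1}(p)=\emptyset$ --- are sound refinements of points the paper leaves implicit.
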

\begin{proof}
	First assume $k>1$. By Lemma \ref{root} we have $r(p)=s_0(p)-s_1(p)$ for all $p\in Z$. Now assume $k=1$.
	The assumption that $T^0$ exists and the leading coefficient of $f_1$ do not vanish implies that
	$\phi$ is a finite morphism over $Z$. For any $p$ in $Z$ the number $r(p)$ is the number of distinct roots of $f_1$ over $p$.
	Assume $f_1(p)=(x_n-a_1)^{r_1}...(x_n-a_m)^{r_m}(x_n-b_1)...(x_n-b_l)$ with $r_i>1$. We have $r(p)=m+l$,
	$\deg f_1(p)=r_1+...+r_m+l$, $s_0(p)=(r_1-1)+...+(r_m-1)=r_1+...+r_m-m$ by Lemma \ref{res}, hence $r(p)=\deg f_1-s_0(p)$.\par
\end{proof}
\begin{cor}\label{M1}
	Fix $Z\subset \A^{n-1}$. Assume that the leading coefficient of $f_i$ do not vanish over $Z$ for all $i$ and one of the following condition holds:
	\begin{enumerate}[(i)]
	\item $k=1$ and ($A^0$) holds.
	\item $k>1$, ($A^1$) holds and $s_0$ is constant over $Z$.
	\end{enumerate}	
	Then $\phi$ is a finite morphism over $Z$. When $k=1$ (resp. $k>1$) the ramification locus of $\phi$ is exactly the locus where the function $s_0$ (resp. $s_1$)
	do not reach its minimum.
\end{cor}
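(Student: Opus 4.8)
The plan is to read the corollary off directly from the fiber-counting formulas of Lemma \ref{M11}, once finiteness of $\phi$ over $Z$ has been secured; essentially all the content is already packaged in that lemma, and what remains is a semicontinuity bookkeeping.

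First I would dispose of finiteness. Since the leading coefficient of $f_1$ does not vanish at any point of $Z$, the single relation $f_1=0$ exhibits $x_n$ as integral over the coordinate ring of $Z$ (after inverting that leading coefficient), so $\phi$ is finite over $Z$; for $k=1$ this is already recorded in the proof of Lemma \ref{M0nf}. In particular no fiber $\phi^{-1}(p)$ with $p\in Z$ can be all of $\A^1$, so each fiber is a genuine finite set and the quantity $r(p)=|\phi^{-1}(p)|$ is well defined.

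Next I would treat the two cases in turn. For $k=1$ (hypothesis (i)), Lemma \ref{M11}(i) gives $r(p)=\deg f_1(p)-s_0(p)$; the non-vanishing of the leading coefficient forces $\deg_{x_n}f_1$ to be constant on $Z$, so $r(p)$ and $-s_0(p)$ differ by a fixed constant. Hence $r(Z)=\max_{p\in Z}r(p)$ is attained exactly where $s_0$ is minimal, and the ramification locus $\{p\in Z : r(p)<r(Z)\}$ coincides with the locus where $s_0$ fails to reach its minimum. For $k>1$ (hypothesis (ii)), Lemma \ref{M11}(ii) gives $r(p)=s_0(p)-s_1(p)$; here the extra assumption that $s_0$ is constant over $Z$ is precisely what lets me absorb $s_0$ into a constant, so that $r(p)$ is maximal exactly where $s_1$ is minimal, and the ramification locus is the locus where $s_1$ does not reach its minimum.

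The only subtle point --- and the step I would be most careful about --- is the finiteness claim together with the convention that the \emph{ramification locus} means the set where the reduced fiber cardinality $r(p)$ drops below its maximal value $r(Z)$; this is what forces the minimum (rather than maximum) of $s_0$ or $s_1$ to mark the unramified locus. Once these conventions are fixed, the corollary is a formal consequence of Lemma \ref{M11}, with hypothesis (ii)'s constancy of $s_0$ doing the essential work in the $k>1$ case.
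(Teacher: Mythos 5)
Your proposal is correct and follows the same route the paper intends: the corollary is stated without proof precisely because it is meant to be read off from Lemma \ref{M11} exactly as you do, with finiteness coming from the non-vanishing leading coefficients and the ramification locus identified as the locus where $r(p)$ drops below $r(Z)$ (which is also how the corollary is used in the proof of Proposition \ref{NN}). Your explicit flagging of the constancy of $\deg_{x_n} f_1$ over $Z$ in case (i) and of the role of the constancy of $s_0$ in case (ii) matches the intended argument.
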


\subsubsection{The small degree cases}
In this section we deal with the cases that the $\deg f_i$ is too small so that ($A^0$) or ($A^1$) dose not hold.
\begin{lem}\label{nM0} 
	Under the assumption and notation in Proposition \ref{NN}, if $k=1$ and ($A^0$) dose not hold over $Z$, then the conclusion of Proposition \ref{NN} is true.
\end{lem}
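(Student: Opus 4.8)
The plan is to read off the fibres of $\phi$ directly, because when $k=1$ and $(A^0)$ fails the polynomial $f_1$, viewed as a polynomial in $x_n$, has degree at most $1$, so the projection is completely transparent. First I would record the two properties of the topological Euler characteristic that drive the argument: additivity over a decomposition into a closed piece and its open complement, $\ctop{X}=\ctop{U}+\ctop{C}$ whenever $X=U\sqcup C$ with $C$ closed, and multiplicativity for products, together with $\ctop{\A^1}=1$. Both hold for complex algebraic varieties. The failure of $(A^0)$ for $k=1$ means $\deg_{x_n}f_1\leq1$, and I would split into the cases $\deg_{x_n}f_1=0$ and $\deg_{x_n}f_1=1$.

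If $\deg_{x_n}f_1=0$, then $f_1\in K[x_1,...,x_{n-1}]$ does not involve $x_n$, so $X_I$ is a cylinder: over $p\in\A^{n-1}$ the fibre $\phi^{-1}(p)$ is $\A^1$ when $f_1(p)=0$ and is empty otherwise. Hence $\phi^{-1}Z=(Z\cap\{f_1=0\})\times\A^1$, and multiplicativity gives $\ctop{\phi^{-1}Z}=\ctop{Z\cap\{f_1=0\}}$. The set $Z\cap\{f_1=0\}$ is cut out in $\A^{n-1}$ by $g_1,...,g_l,f_1$, that is by $l+1$ polynomials of degree at most $\max(d,e)$, so by the assumed existence of the $(n-1)$-dimensional constants we get $|\ctop{\phi^{-1}Z}|\leq N^{n-1}_{\max(d,e),l+1}$.

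If $\deg_{x_n}f_1=1$, write $f_1=a\,x_n+b$ with $a,b\in K[x_1,...,x_{n-1}]$ and $a\not\equiv0$ (so $\deg a\leq d-1$, $\deg b\leq d$). I would stratify $Z$ by the vanishing of $a$ and $b$: on $Z_1=Z\cap\{a\neq0\}$ the fibre is the single point $x_n=-b/a$, on $Z_2=Z\cap\{a=0,\,b\neq0\}$ the fibre is empty, and on $Z_3=Z\cap\{a=0,\,b=0\}$ the fibre is $\A^1$. Since these are disjoint locally closed pieces covering $Z$, additivity yields $\ctop{\phi^{-1}Z}=\ctop{\phi^{-1}Z_1}+\ctop{\phi^{-1}Z_3}$. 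The first summand equals $\ctop{Z_1}$ because $p\mapsto(p,-b(p)/a(p))$ is an isomorphism onto $\phi^{-1}Z_1$, and the second equals $\ctop{Z_3}$ by multiplicativity. Now $Z_3$ is cut out by $g_1,...,g_l,a,b$, whence $|\ctop{Z_3}|\leq N^{n-1}_{\max(d,e),l+2}$.

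The one point requiring care is $Z_1$, which is only locally closed and therefore not directly covered by the $N$-constants; I would dispose of it by writing $\ctop{Z_1}=\ctop{Z}-\ctop{Z\cap\{a=0\}}$ using additivity and bounding the two closed pieces separately, giving $|\ctop{Z_1}|\leq N^{n-1}_{e,l}+N^{n-1}_{\max(d,e),l+1}$. Assembling the pieces, in either case $|\ctop{\phi^{-1}Z}|$ is bounded by a finite sum of $N^{n-1}$-constants depending only on $d$, $e$ and $l$, which furnishes the required integer $L^n_{d,1,e,l}$. I do not anticipate any genuine obstacle: the only subtlety is the open stratum $Z_1$, which is handled by additivity of $\ctop{\cdot}$, and the routine bookkeeping of degrees, for which $\max(d,e)$ serves as a uniform bound.
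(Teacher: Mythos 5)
Your argument is correct and follows essentially the same route as the paper's: split on $\deg_{x_n}f_1\in\{0,1\}$, use multiplicativity of $\chi_{top}$ over the cylinder locus and additivity to handle the locally closed stratum where the leading coefficient does not vanish. The only cosmetic difference is that you stratify the degree-one case into three explicit pieces, whereas the paper folds the $\{a=0\}$ locus back into the degree-zero case by replacing $f_1$ with its constant term; the resulting bounds are the same.
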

\begin{proof}
	The assumption says that  $\deg f_1<2$ over $Z$. If $\deg f_1=0$, then $f_1\in K[x_1,...,x_{n-1}]$ is independent of $x_n$. Let $Z'$ be the zero locus of the ideal $J+(f_1)$,
	then $|\ctop{Z'}|\leq N^{n-1}_{\max\{e,d\},l+1}$. One see that outside $Z'$, the pre-image of $\phi$ is empty, and $\phi^{-1}Z'\cong Z'\times\A^1$.
	Hence $|\ctop{\phi^{-1}Z}|=|\ctop{\phi^{-1}Z'}|=|\ctop{Z'}|\leq N^{n-1}_{\max\{e,d\},l+1}$.\par
	On the other hand, assume $\deg f_1=1$. Write $f_1=a_1x_n+a_0$. Let $Z_0$ be the zero locus defined by $J+(a_1)$ and $Z_1=Z-Z_0$. Then $\ctop{\phi^{-1}Z_0}$ can be
	computed in the previous case since we can replace $f_1$ by $a_0$ and replace $Z$ by $Z_0$.
	On the other hand, since $f_1$ is a degree one polynomial over any points in $Z_1$, we have $\phi^{-1}Z_1\cong Z_1$.
	Now $|\ctop{\phi^{-1}Z_1}|=|\ctop{Z_1}|=|\ctop{Z}-\ctop{Z_0}|\leq N^{n-1}_{e,l}+N^{n-1}_{\max\{e,d\},l+1}$ can be compute. Thus the lemma is proved.
\end{proof}
The other case is that ($A^0$) holds but ($A^1$) dose not hold. This happened when $k=1$ and $\deg f_1=2$ or $k>1$ and $\deg f_i=1$ for some $i$.
\begin{lem}\label{nM1}
	Let $Z\subset H$ and assume the following.
	\begin{enumerate}[(i)]
		\item ($A^0$) holds but ($A^1$) dose not hold.
		\item $T^0(p)$ is not full rank for all $p\in Z$.
		\item The leading coefficient of $f_i$ do not vanishing for all $i$ for any point $p\in Z$.
	\end{enumerate}
	Then $\phi$ is one-to-one over $Z$. In particular, $\ctop{\phi^{-1}Z}=\ctop{Z}$.
\end{lem}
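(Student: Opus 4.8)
The plan is to handle the two ways in which ($A^1$) can fail separately and, in each, to compute the fibre of $\phi$ over an arbitrary $p\in Z$ exactly. Writing $p=(a_1,\dots,a_{n-1})$, recall that $\phi^{-1}(p)$ is the set of $x_n\in K$ annihilating all the one-variable polynomials $f_i(p,x_n)$. By hypothesis (iii) the leading coefficient of each $f_i$, viewed in $x_n$, is nonzero at every point of $Z$, so over each such $p$ the polynomial $f_i(p,x_n)$ attains exactly its generic $x_n$-degree. The first bookkeeping step is to read off, from the convention defining ($A^j$), what the failure of ($A^1$) forces: when $k=1$, ($A^0$) gives $\deg f_1\geq 2$ and the failure of ($A^1$) gives $\deg f_1\leq 2$, so $\deg f_1=2$; when $k>1$, ($A^0$) gives $\deg f_i\geq 1$ for all $i$ while ($A^1$) fails, so $\deg f_{i_0}=1$ for some index $i_0$.

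For $k=1$: hypothesis (ii) says $T^0(p)=T_{f_1,f_1'}(p)$ is not full rank, so by Lemma \ref{res} the polynomials $f_1(p,\cdot)$ and $f_1'(p,\cdot)$ share a root, i.e. $f_1(p,\cdot)$ has a repeated root. Being of degree two with a repeated root, $f_1(p,x_n)=a(p)\,(x_n-\alpha(p))^2$ (writing $f_1=ax_n^2+bx_n+c$) has the single root $\alpha=-b/(2a)$, whence $|\phi^{-1}(p)|=1$. For $k>1$: the degree-one equation $f_{i_0}=ax_n+b$ has, over each $p\in Z$, the unique root $-b(p)/a(p)$, where $a$ is the nonvanishing leading coefficient; since $\phi^{-1}(p)$ is contained in the zero set of $f_{i_0}(p,\cdot)$ it therefore has at most one element, while hypothesis (ii) together with Lemma \ref{res} guarantees that $f_1,\dots,f_k$ do have a common zero over $p$. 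Hence $|\phi^{-1}(p)|=1$, and $\phi$ is one-to-one over $Z$.

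The step I expect to be the main obstacle is the passage from one-to-one to $\ctop{\phi^{-1}Z}=\ctop{Z}$: a bijective morphism of complex varieties need not be a homeomorphism, so set-theoretic injectivity alone does not formally yield equality of topological Euler characteristics. I would resolve this by upgrading the root count to an honest isomorphism. In both cases the unique root of the fibre is given by a regular function $\psi$ on $Z$, namely $\psi=-b/(2a)$ when $k=1$ and $\psi=-b/a$ when $k>1$; this is well defined because $a$ does not vanish anywhere on $Z$, so $1/a$ is regular on $Z$. Consequently $\phi^{-1}(Z)$ is exactly the graph $\{(p,\psi(p)):p\in Z\}$, and $p\mapsto(p,\psi(p))$ is a morphism inverse to the restriction of $\phi$. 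Thus $\phi^{-1}(Z)\cong Z$, and in particular $\ctop{\phi^{-1}Z}=\ctop{Z}$. Once the degree bookkeeping and the application of Lemma \ref{res} are in place the root counting is immediate, and the only genuine care is needed in this final graph identification.
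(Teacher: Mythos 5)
Your proof is correct and follows essentially the same route as the paper: the same degree bookkeeping from the failure of ($A^1$), the same use of Lemma \ref{res} (via the non-full-rank hypothesis) to pin down the fibre, and the same case split on $k$. The only difference is that you explicitly upgrade set-theoretic injectivity to an isomorphism onto the graph of a regular function before concluding $\ctop{\phi^{-1}Z}=\ctop{Z}$, a point the paper leaves implicit; this is a sound refinement rather than a different approach.
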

\begin{proof}
	First assume $k>1$. By Lemma \ref{M0nf} the assumption yields that $Z$ is contained in the image of $\phi$. On the other hand,
	$T^0$ is defined but $T^1$ is not defined implies $\deg f_i=1$ for some $i$, hence $\phi$ is one-to-one over $Z$.\par
	Now assume $k=1$. Since $T^0$ is defined but $T^1$ is not defined, we have $\deg f_1=2$, hence $\phi$ is two-to-one over some open neighborhood of $Z$.
	However, Lemma \ref{M0nf} implies that $Z$ is lying on the ramification locus, hence $\phi$ is one-to-one over $Z$.
\end{proof}

\subsubsection{The main proofs}
We will need the following lemma.
\begin{lem}\label{com}
	If $S=S_1\cup S_2\cup...\cup S_k$ for some algebraic set $S_i$. For any $I\subset\{1,...,k\}$,
	we denote $S_I=\bigcap_{i\in I}S_i$. Assume that $|\ctop{S_I}|\leq M$ for some integer $M$
	and for all $I\subset\{1,...,k\}$. Then $|\ctop{S}|\leq(2^{k}-1)M$.
\end{lem}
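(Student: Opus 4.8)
The plan is to derive the bound from the inclusion--exclusion principle for the topological Euler characteristic. The essential fact I would invoke is that over $\Cc$ the invariant $\ctop{\cdot}$ is additive, in the sense that $\ctop{A}=\ctop{A\setminus B}+\ctop{B}$ whenever $B$ is a closed algebraic subset of $A$; equivalently, $\ctop{\cdot}$ agrees with the compactly supported Euler characteristic and is therefore motivic. From the two--set identity $\ctop{A\cup B}=\ctop{A}+\ctop{B}-\ctop{A\cap B}$ one obtains, by induction on $k$, the general formula
\[
\ctop{S}=\sum_{\emptyset\neq I\subseteq\{1,\dots,k\}}(-1)^{|I|-1}\ctop{S_I}.
\]

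Granting this identity, the lemma follows immediately. I would take absolute values and apply the triangle inequality, so that
\[
|\ctop{S}|\leq\sum_{\emptyset\neq I\subseteq\{1,\dots,k\}}|\ctop{S_I}|\leq(2^k-1)M,
\]
where the last step uses the hypothesis $|\ctop{S_I}|\leq M$ together with the fact that $\{1,\dots,k\}$ has exactly $2^k-1$ non-empty subsets, each contributing one term to the sum.

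The only real content is justifying the inclusion--exclusion identity, and even this is standard. I would either cite the additivity of $\ctop{\cdot}$ for complex algebraic varieties, or prove the required formula directly by induction: the case $k=1$ is trivial, and for the inductive step I would write $S=(S_1\cup\dots\cup S_{k-1})\cup S_k$, apply the two--set formula, observe that $(S_1\cup\dots\cup S_{k-1})\cap S_k=\bigcup_{i<k}(S_i\cap S_k)$, and feed this intersection back into the induction hypothesis while tracking signs. The main (and only) obstacle is the bookkeeping: verifying that each intersection $S_I$ occurs exactly once with the correct sign $(-1)^{|I|-1}$. This is precisely the combinatorial identity underlying inclusion--exclusion, so no genuine difficulty arises, and the factor $2^k-1$ is simply the count of non-empty index sets appearing after collecting terms.
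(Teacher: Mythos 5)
Your proof is correct and rests on the same ingredients as the paper's: additivity of $\chi_{top}$ for complex algebraic sets, the two-set formula $\ctop{A\cup B}=\ctop{A}+\ctop{B}-\ctop{A\cap B}$, and induction on $k$. The only difference is packaging — you establish the full inclusion--exclusion identity and then apply the triangle inequality over the $2^k-1$ nonempty subsets, whereas the paper skips the sign bookkeeping by bounding directly at each inductive step via $|\ctop{S}|\leq|\ctop{S'}|+|\ctop{S_k}|+|\ctop{S'\cap S_k}|$ with $S'=S_1\cup\dots\cup S_{k-1}$; both yield the same constant.
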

\begin{proof}
	We prove by induction on $k$. When $k=2$ we have
	\[\ctop{S}=\ctop{S_1-S_{12}}+\ctop{S_2-S_{12}}+\ctop{S_{12}}=\ctop{S_1}+\ctop{S_2}-\ctop{S_{12}},\]
	so $|\ctop{S}|\leq3M=(2^2-1)M$.
	In general let $S'=S_1\cup...\cup S_{k-1}$, then
	$S'\cap S_k=(S_1\cap S_k)\cup...\cup(S_{k-1}\cap S_k)$, hence $|\ctop{S'\cap S_k}|\leq(2^{k-1}-1)M$
	by induction hypothesis. We also have $|\ctop{S'}|\leq(2^{k-1}-1)M$. Thus
	\begin{align*}
		|\ctop{S}|&=|\ctop{S'-(S'\cap S_k)}+\ctop{S_k-(S'\cap S_k)}+\ctop{S'\cap S_k}|\\
		&\leq|\ctop{S'}|+|\ctop{S_k}|+|\ctop{S'\cap S_k}|\\
		&\leq (2(2^{k-1}-1)+1)M=(2^k-1)M.\end{align*}
\end{proof}
\begin{proof}[Proof of Proposition \ref{NN}.]
	We will divide $Z$ into many pieces, and treat each piece separately. In each piece,
	either the topology of the pre-image can be easily computed, or after cut out some closed subset the
	pre-image can be computed, and there is some quantum which strictly decrease after restrict to the subset
	above. In the latter case we can use induction on the special quantum and finally the problem could be solved.
	We will treat the following cases.
	\begin{itemize}
	\item[Case(I)] ($A^0$) holds.\par
	Let
	\[Z'=\se{p\in Z}{T^0(p)\mbox{ is not full rank }}\]
	and $Z''=Z-Z'$.
	By Lemma \ref{M0f}, \[\ctop{\phi^{-1}Z''}=\sep{(\deg f_1)\ctop{Z''}}{0}\]\par
	We further divide $Z'$ into \[Z_-=\se{p\in Z'}{\mbox{The leading coefficient of }f_i\mbox{ vanish over $p$ for some }i}\]
	and $Z_+=Z'-Z_-$.
	To compute $\ctop{\phi^{-1}Z_-}$, let $a_i$ be the leading coefficient of $f_i$. For $S\subset\{1,...,k\}$, let
	$W_S$ be the zero locus defined by $J+(a_{i_0}...a_{i_p})$ if $S=\{i_0,...,i_p\}$ and $J$ is the defining ideal of $Z$.
	Then $W_S$ is the locus in $Z$ such that the leading coefficient of $f_i$ vanish for all $i\in S$.
	Hence $Z_-=\bigcup_{1\leq i\leq k}W_i$ and $W_S=\bigcap_{i\in S}W_i$. Furthermore, one may induction
	on the number $\deg f_1+...+\deg f_k$ so that we may assume $\ctop{\phi^{-1}W_S}$ can be computed.
	By Lemma \ref{com}, $\ctop{\phi^{-1}Z_-}$ can be bounded.\par
	Now one has to compute $\ctop{\phi^{-1}Z_+}$. If ($A^1$) is not true, then Lemma \ref{nM1} implies that $\ctop{\phi^{-1}Z_+}=\ctop{Z_+}$. Assume ($A^1$) is true.
	We divide $Z_+$ into
	\[Z_0=\se{p\in Z_+}{s_0(p)\mbox{ reach its minimum in }Z_+}\]
	and $Z_0'=Z_+-Z_0$.
	One may replace $Z$ by $Z'_0$ and induction on $\min_{p\in Z}\{s_0(p)\}$.
	This number is increasing and always less or equal than $\deg f_i$ for all $i$, so after finite step, $Z'_0$ would be empty.\par
	If $k>1$ we further divide $Z_0$ into
	\[Z_1=\se{p\in Z_0}{s_1(p)\mbox{ reach its minimum in }Z_0}\]
	and $ Z'_1=Z_0-Z_1$.
	By Corollary \ref{M1}, when $k=1$ (resp. $k>1$) $\phi$ is unramified over $Z_0$ (resp. $Z_1$). Hence
	\[|\ctop{\phi^{-1}Z_i}|=r(Z_i)|\ctop{Z_i}|\leq d|\ctop{Z_i}|,\]
	with $i=0$ (resp. $i=1$) in $k=1$ (resp. $k>1$) case.\par
	When $k>1$ we have $r(Z'_1)<r(Z_0)$. We will replace $Z$ by $Z'_1$ and induction
	on the number $r$. When $r(Z_0)=1$ $Z'_1$ is always empty, so the induction works.
	\item[Case(II)] ($A^0$) does not hold.
		If $k=1$, this case can be solved by Lemma \ref{nM0}. Now assume $k>1$. In this case $\deg f_i=0$ for some $i$. If $f_i$ is a non-zero constant, then $\phi^{-1}Z$
		is empty, so there is nothing to prove. If $f_i$ is identically zero, we can drop out $f_i$
		from the generator of $I$, and goes to the case with smaller $k$. By induction on $k$, this situation is solved.
	\end{itemize}		
	We have to show that $Z'$, $Z_-$, $Z'_1$ and $Z'_0$ can be defined by algebraic equations, and the total number and the degree
	of those equations can be bounded by some integer depends on $d$ and $k$, so the induction could work.\par
	To see this, let $c_i$ and $r_i$ be the number of columns and rows of $T^i$, respectively, for $i=0$, $1$.
	Then $c_0\leq dk$, $r_0\leq 2(k-1)d$, and $c_1\leq 2c_0$, $r_1\leq 2r_0$. Let $R$ be the ideal containing
	all maximal minors of $T^0$, then $R$ can be generated by $C^{r_0}_{c_0}$ many generators and each generator
	is a degree at most $dr_0$ polynomial. One can see that $Z'$ is generated by $J+R$. Since $Z_-=\bigcup_{1\leq i\leq k}W_i$ and
	the defining ideal of $W_i$ are bounded, the defining ideal of $Z_-$ is bounded.\par
	
	Now let $t_i=\max_{p\in Z}{rk(T^i(p))}$.
	$p\in Z$ satisfied $s_i(p)$ do not reach minimum if and only if $s_i(p)+t_i>r_i$. Hence one only need to find those
	points in $Z$ such that the rank of $T^i$ at that point is less than $t_i$, or equivalently, all $t_i\times t_i$ minors
	of $T^i$ vanishes. Let $Q_i$ be the ideal containing all $t_i\times t_i$ minors of $T^i$, then $Q_i$ is generated by
	at most $r_ic_i$ many elements and each element is a degree at most $dt_i\leq dr_i$ polynomial in $K[x_1,...,x_{n-1}]$.
	One can easily see that $Z'_i$ is defined by $J+Q_i$ for $i=0$, $1$.\par
	The other task is to compute $\ctop{Z''}$, $\ctop{Z_+}$ and $\ctop{Z_i}$ for $i=0,1$. Since $Z'$ is generated by $J+R$, 
	\[|\ctop{Z'}|\leq N^{n-1}_{e+dr_0,l+C^{r_0}_{c_0}}.\]
	We have $\ctop{Z''}=\ctop{Z}-\ctop{Z'}$. Thus \[|\ctop{Z''}|\leq N^{n-1}_{e,l}+N^{n-1}_{e+dr_0,l+C^{r_0}_{c_0}}.\]
	Now consider $|\ctop{W_S}|\leq N^{n-1}_{e+d^{|S|},l+1}\leq N^{n-1}_{e+d^k,l+1}$
	for all $S\subset\{1,...,k\}$, hence \[|\ctop{Z_-}|\leq (2^k-1)N^{n-1}_{e+d^k,l+1}\] by Lemma \ref{com}.
	A conclusion is that $\ctop{Z_+}=\ctop{Z'}-\ctop{Z_-}$ can be bounded.\par
	Finally we try to bound $\ctop{Z_i}$. As the argument above $Z'_i$ is defined by the ideal $J+Q_i$ for $i=0$ and $1$,
	hence $|\ctop{Z'_i}|\leq N^{n-1}_{e+dr_i,l+r_ic_r}$ can be bounded. Thus
	\[\ctop{Z_0}=\ctop{Z_+}-\ctop{Z'_0}\mbox{ and }\ctop{Z_1}=\ctop{Z_0}-\ctop{Z'_1}\] can be bounded.
\end{proof}

\begin{proof}[Proof of Proposition \ref{Ncase}]
	One can take $N^n_{d,k}=L^n_{d,k,0,1}$ by considering $J$ in Proposition \ref{NN} to be the zero ideal. 
\end{proof}

\subsection{The existence of $M$-constant}

\begin{proof}[Proof of Proposition \ref{Mcase}]
	Given $Y=Y_I\subset\Pp(a_0,...,a_n)$, we may assume $Y$ dose not contained
	in $\{a_0=0\}$. Let $Y'=Y\cap\{a_0=1\}$ and $Y''=Y-Y'$, then $Y''=Y\cap\{a_0=0\}$ can be viewed
	as the zero locus of a weighted homogeneous ideal in $\Pp(a_1,...,a_n)$,
	so $|\ctop{Y''}|\leq M^{n-1}_{d,k}$.\par
	On the other hand, $Y'\subset\A^n/\frac{1}{a_0}(a_1,...,a_n)$. Let $\bar{Y}$ be the pre-image
	of $Y'$ under the natural map $\A^n\rightarrow\A^n/\frac{1}{a_0}(a_1,...,a_n)$,
	then $\bar{Y}$ is defined by an ideal $I$ generated by $k$ elements, and we may assume
	the degree of each generator of $I$ is less or equal than $d$.
	Hence $|\ctop{\bar{Y}}|\leq N^n_{d,k}$.\par
	Now $\bar{Y}\rightarrow Y'$ is a branched covering. Let $\bar{R}\subset\bar{Y}$ be the
	branched locus, and $R\subset Y'$ be the image of $\bar{R}$. One have to compute $\ctop{R}$
	and $\ctop{\bar{R}}$. Note that the morphism $A^n\rightarrow A^n/\frac{1}{a_0}(a_1,..,a_n)$
	ramified at $\{x_{i_1}=...=x_{i_l}=0\}$ for some $i_1$, ..., $i_l$. Let
	$\Xi_1$, ..., $\Xi_j$ be the irreducible commponent of the ramification locus on $\A^n$ and let
	$\bar{S_i}=\Xi_i\cap\bar{R}$. One can see that $|\ctop{\bar{S_i}}|\leq N^n_{d,k+l_i}$
	if $\Xi_i=\{x_{i_1}=...=x_{i_{l_i}}=0\}$ and
	$|\ctop{\bar{S_{i_1}}\cap...\cap\bar{S_{i_m}}}|\leq N^n_{d,k+l'}$ if
	$\Xi_{i_1}\cap...\cap\Xi_{i_m}$ is of codimension $l'$. Moreover, the number of irreducible
	components of ramification locus of $A^n\rightarrow A^n/\frac{1}{a_0}(a_1,..,a_n)$
	is less than $\max_{2\leq m\leq n}\{C^n_m\}<2^n$,
	which is a number depends only on $n$. Hence by Lemma \ref{com}, there is an integer
	$A^n_{d,k}$ depends on $n$, $d$ and $k$ such that $|\ctop{\bar{R}}|\leq A^n_{d,k}$.\par
	To find $\ctop{R}$, we denote by $S_i$ the image of $\bar{S_i}$. Consider
	$\Xi_i\cong\A^{r_i}$ for some $r_i$ and the morphism $\Xi_i\rightarrow im(\Xi_i)$ can be
	viewed as the cyclic quotient $\A^{r_i}\rightarrow\A^{r_i}/\frac{1}{m_i}(b_{i_1},...,b_{i_{r_i}})$
	for some integers $m_i$ and $b_{i_1}$, ..., $b_{i_{r_i}}$.
	Consider $S_i\subset im(\Xi_i)\cong\A^{r_i}/\frac{1}{m_i}(b_{i_1},...,b_{i_{r_i}})
		\subset\Pp(m_i,b_{i_i},...,b_{i_{r_i}})$
	and let $\tl{S_i}$ be the closure of $S_i$ in $\Pp(m_i,b_{i_i},...,b_{i_{r_i}})$.
	Now we have $|\ctop{\tl{S_i}}|\leq M^{r_i}_{d,k}$ and $|\ctop{\tl{S_i}-S_i}|\leq M^{r_i-1}_{d,k}$,
	hence $|\ctop{S_i}|\leq M^{r_i}_{d,k}+M^{r_i-1}_{d,k}$. Moreover, for any
	$i_1$, ..., $i_l\in\{1,...,j\}$, $\Xi_{i_1}\cap...\cap\Xi_{i_l}\cong\A^{r_{i_1...i_l}}$ for some
	integer $r_{i_1...i_l}$ and the same argument shows that
	$|\ctop{S_{i_1}\cap...\cap S_{i_l}}|\leq M^{r_{i_1...i_l}}_{d,k}+M^{r_{i_1...i_l}-1}_{d,k}$.
	In particular, let $M=2\max_{r<n}\{M^r_{d,k}\}$,
	then we have $|\ctop{R}|=|\ctop{S_1\cup...\cup S_j}|\leq(2^j-1)M$ by Lemma \ref{com} and $j<2^n$ as before.
	The conclusion is that there exists an integer $B^n_{d,k}$ depends only on $n$, $d$ and $k$
	such that $|\ctop{R}|\leq B^n_{d,k}$.\par
	Now we have
	\begin{align*} |\ctop{Y'}|&=|\frac{1}{a_0}(\ctop{\bar{Y}}-\ctop{\bar{R}})+\ctop{R}|\\
		& \leq|\ctop{\bar{Y}}|+|\ctop{\bar{R}}|+|\ctop{R}|\leq N^n_{d,k}+A^n_{d,k}+B^n_{d,k}.\end{align*}
	Hence $|\ctop{Y}|\leq|\ctop{Y'}|+|\ctop{Y''}|\leq M^{n-1}_{d,k}+N^n_{d,k}+A^n_{d,k}+B^n_{d,k}$ can be bounded
\end{proof}

\begin{proof}[Proof of Theorem \ref{thm}.]
	One can easily see that $M^1_{d,k}=N^1_{d,k}=d$. Hence Proposition \ref{Ncase} and Proposition \ref{Mcase} implies the theorem. 
\end{proof}
\section{The boundedness of Betti numbers}\label{sBetti}
In this sectoin we will bound the variance of $b_3$. Thanks to Corollary \ref{ctop}, it is equivalence to bound the variance of the topological Euler characteristic,
which is much easier to compute. The following statement is a corollary of Theorem \ref{thm}, which could help us to bound the variance of $\chi_{top}$ under divisorial contraction to point.
\begin{cor}\label{cor}
	Assume that $X$ is a cyclic quotient of local complete intersection locus of codimension $k$ in $\A^n$, and $Y\rightarrow X$ be a weighted blow-up of weight $\sigma$.
	If the $\sigma$-weight of the defining equation of $X$ is bounded by a constant $d$, then $|\ctop{Y}-\ctop{X}|\leq M^n_{d,k}+1$.
\end{cor}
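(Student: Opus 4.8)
The plan is to reduce $\ctop{Y}-\ctop{X}$ to the topological Euler characteristic of the exceptional divisor, and then to realize that divisor as a zero locus in a weighted projective space so that Theorem \ref{thm}(ii) applies directly.

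First I would record the scissor relation for $\ctop{}$. A weighted blow-up is an isomorphism away from its center, which here is the cyclic quotient point $P\in X$; thus $Y\to X$ restricts to an isomorphism $Y\setminus E\xrightarrow{\sim}X\setminus\{P\}$, where $E=\pi^{-1}(P)$ is the exceptional divisor. Using additivity of the topological Euler characteristic over a closed subset and its open complement (valid over $\Cc$, where $\ctop{}$ coincides with the compactly supported Euler characteristic, and which is the same relation already used in Corollary \ref{ctop}), I obtain
\[\ctop{Y}=\ctop{Y\setminus E}+\ctop{E}=\ctop{X\setminus\{P\}}+\ctop{E},\qquad\ctop{X}=\ctop{X\setminus\{P\}}+1.\]
Subtracting gives $\ctop{Y}-\ctop{X}=\ctop{E}-1$, hence $|\ctop{Y}-\ctop{X}|=|\ctop{E}-1|\leq|\ctop{E}|+1$, and the whole problem is reduced to bounding $|\ctop{E}|$.

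Next I would exhibit $E$ as a zero locus in a weighted projective space. Write $X=\bar{X}/\mu$, where $\bar{X}=V(g_1,\dots,g_k)\subset\A^n$ is the local complete intersection of codimension $k$ and $\mu$ is the acting cyclic group. The weighted blow-up of $\A^n$ with weight $\sigma$ has exceptional divisor the weighted projective space $\Pp(\sigma)$, and the exceptional divisor of the induced blow-up of $\bar{X}$ is cut out in $\Pp(\sigma)$ by the $\sigma$-initial forms $\mathrm{in}_\sigma(g_1),\dots,\mathrm{in}_\sigma(g_k)$. Each $\mathrm{in}_\sigma(g_i)$ is weighted homogeneous of weight equal to the $\sigma$-weight of $g_i$, hence of weight $\leq d$ by hypothesis. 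Descending to the cyclic quotient sends $\Pp(\sigma)$ to another weighted projective space and carries $E$ to the zero locus of a weighted homogeneous ideal generated by $k$ elements of weight $\leq d$. Theorem \ref{thm}(ii) then gives $|\ctop{E}|\leq M^n_{d,k}$, and together with the first step this yields $|\ctop{Y}-\ctop{X}|\leq M^n_{d,k}+1$.

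The main obstacle is the second step: one must check that the set-theoretic exceptional divisor $E$ really is the zero locus of the initial forms $\mathrm{in}_\sigma(g_i)$, and that passing to the cyclic quotient preserves both the property of the ambient being a weighted projective space and the count of $k$ defining equations of weight at most $d$. The index bookkeeping is mild: the ambient weighted projective space has dimension $n-1\leq n$, so the constant furnished by Theorem \ref{thm}(ii) is at most $M^n_{d,k}$. Once $E$ is presented in this weighted-homogeneous form the bound is immediate, so all the delicacy is concentrated in this identification and in the behaviour of the quotient.
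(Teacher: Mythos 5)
Your argument is correct and follows essentially the same route as the paper: both reduce $\ctop{Y}-\ctop{X}$ to $\ctop{E}-\ctop{pt}$ via additivity of the Euler characteristic, identify $E$ as a zero locus of $k$ weighted homogeneous equations of weight at most $d$ in a weighted projective space, and invoke Theorem \ref{thm}(ii). Your extra care about the initial forms and the descent to the quotient only makes explicit what the paper's two-line proof leaves implicit.
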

\begin{proof}
	Write $\sigma=\frac{1}{m}(a_0,...,a_n)$. The exceptional locus $E$ of $Y\rightarrow X$ is contained in $\Pp^n(a_0,..,a_n)$ and is defined by $k$ equations with weight $\leq d$.
	Hence $|\ctop{E}|\leq M^n_{d,k}$. Now \[|\ctop{Y}-\ctop{X}|=|\ctop{E}-\ctop{pt}|\leq M^n_{d,k}+1.\]
\end{proof}
Given a divisorial contraction $Y\rightarrow X$, we will show that the difference of $\chi_{top}$ can be bound by constant depends only on $dep(X)$ in $w$-morphism case,
and on $dep(Y)$ in general cases. The reason we need the first statement is that inverse of $w$-morphism occurs in the factorization in Theorem \ref{ch}.
\begin{pro}\label{divm}
	Let $Y\rightarrow X$ be a divisorial contraction contract a divisor $E$ to a point $P\in X$. Assume the index of $P$ is $m>1$ and assume $a(E,X)=1/m$.
	Then \[|\ctop{Y}-\ctop{X}|\leq D_{dep(X)}\] for some integer $D_{dep(X)}$ depends only on the number $dep(X)$.
\end{pro}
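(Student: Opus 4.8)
Because $Y\to X$ is an isomorphism over $X\setminus\{P\}$ and contracts $E$ onto $P$, the quantity $\ctop{Y}-\ctop{X}=\ctop{E}-1$ depends only on the germ $(X,P)$; so the plan is to replace $X$ by the analytic model of the singularity at $P$ and to bound $\ctop{E}$ by means of Theorem \ref{thm}. First I would invoke the classification of $w$-morphisms of Hayakawa, Kawakita and Yamamoto (\cite{Hay1}--\cite{Hay5}, \cite{Kaw1}--\cite{Kaw3}, \cite{Yam}): a divisorial contraction to a point of index $m>1$ with discrepancy $1/m$ is a weighted blow-up, of some weight $\sigma=\frac{1}{m}(a_0,...,a_n)$, of a local complete intersection germ sitting inside a cyclic quotient of $\A^n$, with $n\leq5$ and codimension $k\leq2$. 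Then $E$ is a local complete intersection in $\Pp(a_0,...,a_n)$ cut out by $k$ weighted homogeneous equations whose $\sigma$-weight is some $d$, and Corollary \ref{cor} gives $|\ctop{Y}-\ctop{X}|\leq M^n_{d,k}+1$. Hence it suffices to bound $n$, $k$ and $d$ in terms of $dep(X)$.

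Since $n\leq5$ and $k\leq2$ are absolute, the real task is to bound $d$. Here I would use the discrepancy: for the weighted blow-up of weight $\sigma$ of an LCI whose defining equations have $\sigma$-weight $d$, the condition $a(E,X)=1/m$ imposes a linear relation between $\sum a_i$, $d$ and $m$, so a bound on the weights $a_i$ yields a bound on $d$. The explicit normal forms in the classification show that, for a $w$-morphism to a point of index $m$, the weights $a_i$ are bounded in terms of $m$ alone, so $d\leq d_0(m)$ for some function $d_0$. Finally the index itself is controlled by the depth: since the index is attained in the basket of the $\Q$-smoothing of $(X,P)$, one has $m\leq\Xi(P\in X)\leq\Xi(X)\leq2\,dep(X)$ by Lemma \ref{xidep}. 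Combining these, $d$ is bounded by a function of $dep(X)$, and I may set $D_{dep(X)}=\max\{M^n_{d,k}+1 : n\leq5,\ k\leq2,\ d\leq d_0(2\,dep(X))\}$, a finite maximum.

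The main obstacle is the middle step: extracting from the Hayakawa--Kawakita--Yamamoto classification a uniform bound, purely in terms of the index $m$, on the extracting weights $a_i$ and on the $\sigma$-weight $d$ of the defining equations. This forces one to run through the classification type by type---the cyclic quotient case, the $cA/r$ series, and the finitely many exceptional families such as $cAx/4$, $cD/r$ and $cE/2$---and to check in each case that these weights are $O(m)$. One also needs the auxiliary fact that the index $m$ is realized among the basket indices, which is precisely the link between the local invariant $m$ and the depth supplied by Lemma \ref{xidep}.
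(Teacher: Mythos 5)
Your overall strategy --- pass to the local model at $P$, use the classification to realize $Y\rightarrow X$ as a weighted blow-up of an LCI germ in a cyclic quotient of $\A^4$ or $\A^5$, apply Corollary \ref{cor} to get $|\ctop{Y}-\ctop{X}|\leq M^n_{d,k}+1$ with $n\leq5$, $k\leq2$, and then bound the weight $d$ --- is exactly the paper's. The gap is in your middle step, and it is fatal as stated: it is \emph{not} true that for a $w$-morphism over a point of index $m$ the blow-up weights $a_i$ and the $\sigma$-weight $d$ of the defining equations are bounded in terms of $m$ alone. The $cA/m$ series is a counterexample: for $X\cong(xy+f(z,u)=0)\subset\A^4/\frac{1}{m}(\alpha,-\alpha,1,0)$ the $w$-morphism is the weighted blow-up of weight $\frac{1}{m}(a,b,1,m)$ with $a+b=mk$, where $k$ is the $\tau$-weight of $f$ and can be arbitrarily large for fixed $m$. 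The paper's own second example in Section 5 (the germ $xy+z^{mk}+u^k=0$, of index $m$) has $\ctop{E}=k+2\rightarrow\infty$ with $m$ fixed, so no bound $d\leq d_0(m)$, and a fortiori no bound on $|\ctop{Y}-\ctop{X}|$ in terms of the index, can exist. Consequently your final step, converting $d_0(m)$ into a function of $dep(X)$ via $m\leq\Xi(X)\leq2\,dep(X)$, bounds the wrong quantity.

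What the paper actually does is bound $d$ by $dep(X)$ directly, case by case, not through the index: in the $cA/m$ case $Y$ acquires cyclic quotient points of indices $a$ and $b$, whence $dep(X)\geq a+b-1$ and $d=a+b\leq dep(X)+1$; in the $cAx/4$, $cAx/2$ and $cD/2$ cases the free parameter in the normal form ($k$, $b'$, $l$, $a$) is controlled either by the index of a singular point created on $Y$ or by $aw(X)\leq\Xi(X)\leq2\,dep(X)$ (Lemma \ref{xidep}); and in the $cD/3$ and $cE/2$ cases the weights are absolutely bounded ($d=12$, resp.\ $d=18$). So the repair is to replace the claim ``$a_i$ and $d$ are $O(m)$'' by a verification, type by type in Hayakawa's tables, that $d$ is $O(dep(X))$, reading off which cyclic quotient singularities appear on $Y$ or which multiple of $aw(X)$ bounds the relevant exponent. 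Two smaller points: only \cite{Hay1} and \cite{Hay2} are needed here, since the statement concerns discrepancy $1/m$ over a point of index $m>1$; and your reduction to the germ together with the identity $\ctop{Y}-\ctop{X}=\ctop{E}-1$ is fine and matches the proof of Corollary \ref{cor}.
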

\begin{proof}
	In \cite{Hay1} and \cite{Hay2} Hayakawa classified all extremal divisorial contraction with discrepancy $1/m$ to a higher index point. He proved that such morphism
	is always a weighted blow-up of a cyclic quotient of local complete intersection locus in $\A^4$ or $\A^5$. We will denote by $d$ the upper bound of the weight of the exceptional locus
	viewed as a subvariety in the weighted projective space. What we have to do is to show that $d$ can be determined by $dep(X)$ and then
	$|\ctop{Y}-\ctop{X}|\leq M^n_{d,k}+1$ for $(n,k)=(4,1)$ or $(5,2)$, which is an integer depends only on $dep(X)$.
	We consider the type of the germs $(X,P)$.\par
	\begin{itemize}
	\item[$cA/m$.] By \cite{Hay1}, Theorem 6.4, We have \[X\cong (xy+f(z,u)=0)\subset \A^4_{(x,y,z,u)}/\frac{1}{m}(\alpha,-\alpha,1,0).\]
		If we define $\tau$-wt$(z)=1/m$ and $\tau$-wt$(u)=1$, and assume $\tau$-wt$(f(z,u))=k$, then $Y$ is obtained by the weighted blow-up of weight
		$\frac{1}{m}(a,b,1,m)$, with $a\equiv\alpha$ mod $m$ and $a+b=mk$. Furthermore, direct computation shows that $Y$ consists two cyclic quotient singularity,
		one is of index $a$ and the other one is of index $b$. We conclude that $dep(X)\geq a+b-1$ and the exceptional locus can be viewed as a weighted hypersuface in $\Pp(a,b,1,m)$
		with weight $mk=a+b\leq dep(X)+1$, hence we take $d= dep(X)+1$.
	\item[$cAx/4$.] By \cite{Hay1}, Theorem 7.4 and Theorem 7.9, we have
		$X\cong(\phi=0)\subset \A^4_{(x,y,z,u)}/\frac{1}{4}(1,3,1,2)$ and $Y$ is the weighted blow-up with weight $v$, where $\phi$ and $v$ is one of the following.
		\begin{enumerate}
		\item $\phi=x^2+y^2+f(z,u)$, $v=\frac{1}{4}(2k+1,2k+3,1,2)$ if $k$ is even, and $\frac{1}{4}(2k+3,2k+1,1,2)$ if $k$ is odd, where $k$ is defined by
			$\tau$-wt$(f(z,u))=(2k+1)/2$, providing $\tau$-wt$(z)=1/4$ and $\tau$-wt$(u)=1/2$. In this case $Y$ consists a cyclic quotient point of index $2k+3$,
			hence $dep(X)\geq 2k+3$.
		\item $\phi=x^2\pm2xg(z,u)+y^2+h(z,u)$, $v=\frac{1}{4}(2k+5,2k+3,1,2)$, where $k$ is defined as above and $\tau$-wt$(g)=(2k+1)/4$,
			$\tau$-wt$(h)>(2k+1)/2$. $Y$ consists of a cyclic quotient point of index $2k+5$, hence $dep(X)\geq 2k+5$.
		\end{enumerate}
		In the both cases we take $d=2dep(X)-4$.
	\item[$cAx/2$.] By \cite{Hay1}, Theorem 8.4 and Theorem 8.8, we have
		$X\cong(\phi=0)\subset \A^4_{(x,y,z,u)}/\frac{1}{2}(0,1,1,1)$ and $Y$ is the weighted blow-up with weight $v$, where $\phi$ and $v$ is one of the following.
		\begin{enumerate}
		\item $\phi=x^2+y^2+f(z,u)$, $v=\frac{1}{2}(k,k+1,1,1)$ if $k$ is even, and $\frac{1}{2}(k+1,k,1,1)$ if $k$ is odd, where $k$ is defined by
			$\tau$-wt$(f(z,u))=k$, providing $\tau$-wt$(z)=1/2$ and $\tau$-wt$(u)=1/2$.
		\item $\phi=x^2\pm2xg(z,u)+y^2+h(z,u)$, $v=\frac{1}{2}(k+1,k,1,1)$, where $k$ is defined as above and $\tau$-wt$(g)=k/2$,
			$\tau$-wt$(h)>k$. 
		\end{enumerate}
		In either case $Y$ consists of a cyclic quotient point of index $k+1$, hence $dep(X)\geq k+1$ and one can take $d=2dep(X)-2$.
	\item[$cD/3$.] Use \cite{Hay1}, Theorem 9.9, Theorem 9.14, Theorem 9.20 and Theorem 9.25, we may assume
		$X\cong(\phi=0)\subset\A^4_{(x,y,z,u)}/\frac{1}{3}(2,1,1,0)$ and $Y$ is given by weighted blow-up with weight $v$, where $\phi$ and $v$ is given by one of the following.
		\begin{enumerate}
		\item $\phi=u^2+x^3+yz(y\pm z)$, $v=\frac{1}{3}(2,4,1,3)$, $\frac{1}{3}(2,1,4,3)$.
		\item $\phi=u^2+x^3+yz^2+xy^4\lambda(y^3)+y^6\mu(y^3)$, $v=\frac{1}{3}(2,4,1,3)$, $\frac{1}{3}(2,1,4,3)$.
		\item $\phi=u^2+x^3+y^3+xyz^3\alpha(z^3)+xz^4\beta(z^3)+yz^5\gamma(z^3)+z^6\delta(z^3)$, $v=\frac{1}{3}(2,4,1,3)$. 
		\item $\phi=u^2+x^3+3\xi x^2z^2+y^3+xyz^3\alpha(z^3)+xz^7\beta(z^3)+yz^8\gamma(z^3)+z^{12}\delta(z^3)$, $v=\frac{1}{3}(5,4,1,6)$.
		\end{enumerate}
		The weight only depends on the type, not on the explicit equations. One can take $d=12$.
	\item[$cE/2$.] By \cite{Hay1}, Theorem 10.11, Theorem 10.17, Theorem 10.22, Theorem 10.28, Theorem 10.33, Theorem 10.41, Theorem 10.47, Theorem 10.54,
		Theorem 10.61 and Theorem 10.67, we have $X\cong(\phi=0)\subset\A^4_{(x,y,z,u)}/\frac{1}{3}(2,1,1,0)$ and $Y$ is given by weighted blow-up with weight $v$,
		where $\phi$ and $v$ is given by one of the following.
		\begin{enumerate}
		\item $\phi=u^2+x^3+g(y,z)x+h(y,z)$, $v=\frac{1}{2}(2,3,1,3)$, $\frac{1}{2}(2,1,3,3)$.
		\item $\phi=u^2+x^3+3\xi x^2z^2+g(y,z)x+h(y,z)$, $v=\frac{1}{2}(4,3,1,5)$.
		\item $\phi=u^2+x^3+3\xi x^2z^2+g(y,z)x+h(y,z)$, $v=\frac{1}{2}(2,1,3,3)$.
		\item $\phi=u^2\pm2(\alpha xz+\beta yz^2+\gamma z^5)u+x^3+g(y,z)x+h(y,z)$, $v=\frac{1}{2}(4,3,1,7)$.
		\item $\phi=u^2+x^3+g(y,z)x+h(y,z)$, $v=\frac{1}{2}(6,5,1,9)$.
		\end{enumerate}
		Again in this case one can simply take $d=18$.
	\item[$cD/2$.] As in \cite{Hay2}, we have the following.
		\begin{enumerate}
		\item $X$ is a hyperurface in $\A^4_{(x,y,z,u)}/\frac{1}{2}(1,1,0,1)$ and $Y$ is a weighted blow-up given by one of the following.\par
			\begin{tabular}{|c|c|c|}\hline
	 		Defining equations & Blowing-up weight & Relations\\ \hline
			 & $\frac{1}{2}(1,1,2,3)$ & \\
			$u^2+xyz+x^{2a}+y^{2b}+z^c$ & $\frac{1}{2}(1,3,2,3)$ & \\ & $\frac{1}{2}(3,1,2,3)$ & \\ & $\frac{1}{2}(1,1,4,3)$ & \\ \hline
			 & $\frac{1}{2}(1,l,4,l+2)$ & $l\leq aw(X)+2$ \\
			& $\frac{1}{2}(1,2a+1,2,2a+1)$ & $2a\leq aw(X)$ \\ $u^2+y^2z+\lambda yx^{2a+1}+g(x,z)$ & $\frac{1}{2}(1,b',2,b')$ &  \\ 
			& $\frac{1}{2}(1,b'-1,2,b')$ & $b'\leq aw(X)$ \\ & $\frac{1}{2}(1,b'-1,2,b'+1)$ & \\ \hline
			$u^2\pm2uh(x,z)+y^2z$ & $\frac{1}{2}(1,b',2,b'+2)$ & $b'\leq aw(X)$ \\
			$+\lambda yx^{2a+1}+g(x,z)$ & &\\ \hline
			$u^2+y^2z\pm2yzh(x,z)$ & $\frac{1}{2}(1,b'+1,2,b'+1)$ & $b'\leq aw(X)$ \\ 
			$+\lambda yx^{2a+1}+g(x,z)$ & & \\ \hline
			$u^2+g(x,z)$ & $\frac{1}{2}(1,b'+1,2,b'+1)$ & $b'\leq aw(X)$ \\
			$+(y-h(x,z))(yz+h(x,z)z+\lambda x^{2a+1})$ & & \\\hline
			\end{tabular}\par
			One can take $d=6$ in first case. For the other three cases, note that by Lemma \ref{xidep}, $aw(X)\leq\Xi(X)\leq2dep(X)$. Hence we may take $d=4dep(X)+4$.
		\item $X$ is an LCI locus in $\A^5_{(x,y,z,u,t)}/\frac{1}{2}(1,1,0,1,1)$ and $Y$ is the weighted blow-up given by one of the following.\par
			\begin{tabular}{|l|c|c|} \hline
	 		Defining equations & Blowing-up weight & Relations \\ \hline
			$\left\lbrace\begin{array}{l}u^2+yt+x^{2a}+z^c\\ t-xz-y^3\end{array}\right.$ & $\frac{1}{2}(1,1,2,3,5)$ & \\ \hline
			$\left\lbrace\begin{array}{l}u^2+yt+g(x,z) \\ t-yz-\lambda x^{2a+1}\end{array}\right.$ & $\frac{1}{2}(1,2a-1,2,2a+1,2a+3)$ & $2a\leq aw(X)$ \\ \hline
			$\left\lbrace\begin{array}{l}u^2+zt+\lambda yx^{2a+1}+q(x) \\ t-y\pm2uh(x,z)+p(x,z)\end{array}\right.$ & $\frac{1}{2}(1,b',2,b'+1,2b'+2)$ & $b'\leq aw(X)$ \\ \hline
			$\left\lbrace\begin{array}{l}u^2+zt+\lambda yx^{2a-1}+q(x) \\ t-y^2+p(x,z)\end{array}\right.$ & $\frac{1}{2}(1,b'-1,2,b'+1,2b')$ & $b'\leq aw(X)$ \\ \hline
			$\left\lbrace\begin{array}{l}u^2+yt+g(x,z) \\ t-z(y+2h(x,z))+\lambda x^{2a+1}\end{array}\right.$ & $\frac{1}{2}(1,b'-1,2,b'+1,b'+3)$ & $b'\leq aw(X)$ \\ \hline
			\end{tabular}\par
			We take $d=6$ in first case and $d=4dep(X)+2$ in the remained four cases.
		\end{enumerate}
	\end{itemize}
	%The conclusion is that one can choose \[D_{dep(X)}=\max\{M^3_{18,1},M^3_{4dep(X)+4,1},M^4_{6,2},M^4_{4dep(X)+2,2}\}+1.\]
\end{proof}
\begin{pro}\label{divX}
	Assume $f:Y\rightarrow X$ is divisorial contraction contract a divisor $E$ to a point $P\in X$. Then there is a number $D'_{dep(Y)}$ depends only on $dep(Y)$ such that
	\[|\ctop{Y}-\ctop{X}|\leq D'_{dep(Y)}.\]
\end{pro}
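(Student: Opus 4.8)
The plan is to induct on $dep(Y)$ and use the factorization of Theorem \ref{ch} to peel an arbitrary divisorial contraction to a point apart into pieces that are each already under control: a $w$-morphism (Proposition \ref{divm}), flips and flops, a blow-up of an LCI curve, and a divisorial contraction to a point of strictly smaller depth (to which the induction hypothesis applies). First the easy case: if $f$ is itself a $w$-morphism, then Remark \ref{rch} gives $dep(X)=dep(Y)+1$, so Proposition \ref{divm} yields $|\ctop{Y}-\ctop{X}|\le D_{dep(X)}=D_{dep(Y)+1}$, a bound depending only on $dep(Y)$. In particular, when $dep(Y)=0$ the threefold $Y$ is Gorenstein, whence $dep(X)\le 1$ and $f$ is a weighted blow-up of a cyclic quotient of an LCI germ whose defining weight is bounded by a universal constant; Corollary \ref{cor} then gives $|\ctop{Y}-\ctop{X}|\le D'_0$, establishing the base of the induction.

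Now assume $dep(Y)\ge 1$ and that $f$ is not a $w$-morphism, so $Y$ is not Gorenstein. I apply Theorem \ref{ch} to $f:Y\to X$ (in the role of $g:X\to W$), obtaining
\[\xymatrix{ \tilde{Y} \ar@{-->}[rr]\ar[d]_{h} & & \tilde{Y}' \ar[d]^{h'} \\ Y\ar[rd]_{f} && X'\ar[ld]^{g'} \\ & X &}\]
where $h$ is a $w$-morphism, $\tilde{Y}\dashrightarrow\tilde{Y}'$ is a sequence of flips and flops over $X$, $h'$ is a divisorial contraction to a curve, and $g':X'\to X$ is a divisorial contraction to a point. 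I then expand the telescoping difference
\[\ctop{Y}-\ctop{X}=\left(\ctop{Y}-\ctop{\tilde{Y}}\right)+\left(\ctop{\tilde{Y}}-\ctop{\tilde{Y}'}\right)+\left(\ctop{\tilde{Y}'}-\ctop{X'}\right)+\left(\ctop{X'}-\ctop{X}\right)\]
and bound each summand. The first is a $w$-morphism term, so $|\ctop{Y}-\ctop{\tilde{Y}}|\le D_{dep(Y)}$ by Proposition \ref{divm}. For the last term, $g'$ is again a divisorial contraction to a point and $dep(X')<dep(Y)$ by the depth-decreasing nature of the construction in \cite{CH}, so the induction hypothesis bounds $|\ctop{X'}-\ctop{X}|\le D'_{dep(X')}\le D'_{dep(Y)-1}$.

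It remains to control the middle block. Flops leave all Betti numbers, hence $\ctop$, unchanged, as already used in Proposition \ref{b2}. By Proposition \ref{depflip} each flip strictly drops the depth, and $dep(\tilde{Y})=dep(Y)-1$ by Remark \ref{rch}, so the number of flips is at most $dep(Y)-1$; each flip alters $\ctop$ only by the difference of the Euler characteristics of its two (complete) exceptional curves, a quantity bounded in terms of the depth through the classification of extremal neighborhoods and Corollary \ref{cor}. The blow-up $h'$ of the LCI curve $C\subset X'$ (Remark \ref{Gcurve}) changes $\ctop$ by $\ctop{E'}-\ctop{C}$, where $E'$ is the exceptional surface; since $g'$ contracts $C$ to the point $P$, the curve $C$ is complete, and the number of its components, its arithmetic genus, and the number of degenerate fibres of $E'\to C$ are all bounded in terms of the depth. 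Summing the four bounds produces the desired $D'_{dep(Y)}$.

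The main obstacle is exactly this middle block: one must (i) check that $dep(X')<dep(Y)$, so that the induction genuinely terminates rather than cycling, and (ii) bound the $\ctop$-contribution of the flip/flop sequence together with the curve blow-up purely in terms of the depth. Both hinge on the observation that every object in the diagram lives over the single point $P$, so the relevant exceptional loci are \emph{complete} curves and surfaces whose topology is governed by the local complexity of the singularity; the depth, together with the weight bounds feeding into Corollary \ref{cor}, is precisely what measures that complexity. The verification of (i) is the part I expect to be most delicate, as it requires reading off the depth behaviour of $g'$ from the internal structure of the factorization in \cite{CH} rather than from the black-box statement of Theorem \ref{ch}.
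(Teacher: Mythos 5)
Your proof rests on applying Theorem \ref{ch} to $f:Y\rightarrow X$ itself, and this is where it breaks down. Theorem \ref{ch} factors \emph{extremal neighborhoods}, i.e.\ germs $X\supset C\rightarrow W\ni P$ along a curve contracted to a point: the isolated case is a flipping contraction, and the divisorial case is (the germ over a point of the base curve of) a divisorial contraction \emph{to a curve}. A divisorial contraction to a point has two-dimensional fibre over $P$ and is neither, so the diagram you write down is not produced by any result quoted in the paper. In fact the factorization of \cite{CH} runs in the opposite direction for your purposes: in the divisorial case it takes a contraction to a curve as input and \emph{outputs} a divisorial contraction to a point $g':X'\rightarrow W$, which is precisely why the paper needs Proposition \ref{divX} as an independent ingredient before it can run the inductive argument of Proposition \ref{b3}. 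Your scheme is therefore circular in spirit — you are trying to reduce the one elementary map that the factorization machinery is designed to reduce everything else \emph{to}.

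The paper's actual proof is a direct case-by-case verification using the complete classification of threefold divisorial contractions to points due to Kawakita (\cite{Kaw1}, \cite{Kaw2}, \cite{Kaw3}), Hayakawa (\cite{Hay1}--\cite{Hay5}) and Yamamoto (\cite{Yam}): in every case $f$ is a weighted blow-up of an LCI locus in a cyclic quotient of $\A^4$ or $\A^5$, the weight of the defining equations is bounded linearly in $dep(Y)$ (via $\Xi(Y)\leq 2\,dep(Y)$ from Lemma \ref{xidep}, or via Proposition \ref{depdiv} together with Proposition \ref{divm} in the minimal-discrepancy case), and Corollary \ref{cor} then bounds $|\ctop{Y}-\ctop{X}|$. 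Separately from the structural error, several of your subsidiary claims are unsupported: that the blow-up weight is universally bounded when $dep(Y)=0$; that each flip changes $\ctop$ by an amount bounded in terms of the depth (the paper never proves or uses such a bound for an individual flip); and that $dep(X')<dep(Y)$ in your diagram. None of these can be patched without importing the classification results that constitute the paper's proof, so the proposal does not give an alternative route to the statement.
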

\begin{proof} We will discuss case by case.
\begin{enumerate}[(i)]
	\item $P$ is smooth. By \cite{Kaw1}, $Y\rightarrow X$ is a weighted blow-up, so $E$ is a 2 dimensional weighted projective space and then $|\ctop{Y}-\ctop{X}|\leq M^3_{1,1}+1$.
	\item $f$ is the usual blow-up. As Mori's classification locally $X$ can be viewed as a hypersurface in $\A^4$ and the exceptional divisor after blowing-up a point
		will be a degree two surface in	$\Pp^3$, hence $|\ctop{Y}-\ctop{X}|\leq M^3_{2,1}+1$.
	\item $P$ is of index $m>1$ and $a(X,E)=1/m$. By Proposition \ref{divm}, $|\ctop{Y}-\ctop{X}|\leq D_{dep(X)}$. Now Proposition \ref{depdiv} said that $dep(Y)\geq dep(X)-1$,
		hence \[|\ctop{Y}-\ctop{X}|\leq D_{dep(X)}\leq D_{dep(Y)+1}.\]
	\item $f$ is of ordinary type as in \cite{Kaw2}. By \cite{Kaw2}, Theorem 1.2, either the divisorial contraction belong to the previous cases,
		or one of the cases below. We will denote $m$ to be
		index of $P$, $a$ be the discrepancy $a(X,E)$ and $J=\{(r_1,1),(r_2,1)\}$, $\{(r,1),(r+2,1)\}$ or $\{(r,1),(r+4,1)\}$ be the non-Gorenstein data of $Y$ defined in \cite{Kaw2}.
		As the following table one can see that there is a upper bound of the weight of the exceptional divisor which can be written in terms of $dep(Y)$. Note that we have
		both $r_1+r_2$ and $2r+2\leq\Xi(Y)\leq2dep(Y)$.\par
		\begin{tabular}{|c|c|c|} \hline
		Defining equations & Blowing-up weight & Upper bound \\ \hline 
		$(xy+g(z^m,u)=0)$ & $(r_1/m,r_2/m,a,1)$ & $2dep(Y)$\\
			$\subset \A^4_{(x,y,z,u)}/\frac{1}{m}(1,-1,b,0)$ & &\\ \hline
		$(x^2+xq(z,u)+y^2u+\lambda yz^2+\mu z^3$ & $(r+1,r,a,1)$ & $2dep(Y)$ \\
		$+p(y,z,u)=0)\subset\A^4_{(x,y,z,u)}$ & &\\ \hline
		$(x^2+xzq(z^2,u)+y^2u+\lambda yz^{2\alpha-1}$ & $((r+2)/2,r/2,a,1)$ & $dep(Y)$ \\
		$+p(y,z,u)=0)\subset\A^4_{(x,y,z,u)}$ & &\\ \hline
		$\begin{pmatrix}yu+z^{(r+1)/a}+q(z,u)u+t=0 \\ x^2+xt+p(y,z,u)=0\end{pmatrix}$ & $(r+1,r,a,1,r+1)$ & $2dep(Y)$ \\
		$\subset\A^5_{(x,y,z,u,t)}$ & & \\ \hline
		$\begin{pmatrix}yu+z^{(r+2)/a}+q(z^2,u)zu+t=0 \\ x^2+xt+p(z^2,u)=0\end{pmatrix}$ & $((r+2)/2,r/2,a,1,(r+2)/2)$ & $dep(Y)+1$ \\
		$\subset\A^5_{(x,y,z,u,t)}/\frac{1}{2}(1,1,1,0,1)$ & &\\ \hline
		\end{tabular}		
	\item $f$ is of exceptional type as in \cite{Kaw2} and $a(X,E)=1$. In this case we use the results in \cite{Hay3}, \cite{Hay4} and \cite{Hay5}. We have several cases here.
		However the conclusion is, one can take the upper bound to be $\max\{4dep(Y)-6,2dep(Y)+1,30\}$.
		\begin{enumerate}
		\item $P$ is of type $cD$. By \cite{Hay4} Theorem 2.1-2.5, we have $X$ is a LCI locus in $\A^4$ or $\A^5$ and $Y$ is obtained by weighted blow-up with some fixed weight.
			All the possibility are listed below.\par
			\begin{tabular}{|c|c|c|c|} \hline
			Defining equations & Blowing-up weight & Relation & Upper bound \\ \hline 
			$x^2+y^2u+\lambda yz^l+g(z,u)$ & $(r,r-1,1,2)$ & $r+3\leq\Xi(Y)$ & $4dep(Y)-6$ \\ \hline
			$x^2+y^2u+2yuh(z,u)$ & $(r,r,1,1)$ & $2r\leq\Xi(Y)$ & $2dep(Y)$ \\ 
			$+\lambda yz^l+g(z,u)$ & & & \\ \hline
			$\left\lbrace\begin{array}{l}x^2+ut+\lambda yz^l+g(z,u) \\ y^2+2xh(z,u)+g'(z,u)-t\end{array}\right.$ & $(r+1,r,1,1,2r+1)$ & $2r+1\leq\Xi(Y)$ & $2dep(Y)+1$ \\ \hline
			$x^2+2xh(z,u)+y^2u$ & $(r+1,r,1,1)$ & $2r+1\leq\Xi(Y)$ & $2dep(Y)+1$ \\
			$+\lambda y z^l+g(z,u)$ & & & \\ \hline
			$\left\lbrace\begin{array}{l}x^2+yt+g(z,u) \\ yu+\lambda z^l+2uh(z,u)-t \end{array}\right.$ & $(r,r-1,1,1,r+1)$ & $2r\leq\Xi(Y)$ & $2dep(Y)$ \\ \hline
			\end{tabular}
		\item $P$ is of type $cE$. By \cite{Hay5} Theorem 1.1, we have $X$ is a LCI locus in $\A^4$ or $\A^5$, and $Y$ is a weighted blow-up as in the following table.\par
			\begin{tabular}{|c|c|}\hline
			Defining equations & Blowing-up weight \\ \hline
			$x^2+y^3+g(z,u)y+h(z,u)$ & $w=(2,2,1,1)$ \\ \hline
			$x^2+2xq(y,z)+y^3+g(z,u)y+h(z,u)$ & $w=(3,2,1,1)$\\ \hline
			$x^2+y^3+g(z,u)y+h(z,u)$ &  $w=(3,2,2,1)$ \\ \hline
			$x^2+y^3+3\xi y^2u^2+g(z,u)y+h(z,u)$  & $w=(4,3,2,1)$ \\ \hline
			$x^2\pm 2x(\alpha yu+\beta z^2+\gamma zu^2+\delta u^4)+y^3+g(z,u)y+h(z,u)$  & $w=(5,3,2,1)$\\ \hline
			$x^2+y^3+3(\lambda zu+\mu u^3)y^2+g(z,u)y+h(z,u)$  &$w=(5,4,2,1)$ \\ \hline
			$x^2+y^3+g(z,u)y+h(z,u)$  & $w=(6,4,3,1)$ \\ \hline
			$x^2+y^3+3(\lambda zu+\mu u^4)y^2+g(z,u)y+h(z,u)$  & $w=(7,5,3,1)$ \\ \hline
			$x^2\pm 2xu(\alpha yu+\beta z^2+\gamma zu^3+\delta u^6)+y^3+g(z,u)y+h(z,u)$  & $w=(8,5,3,1)$\\ \hline
			$x^2+y^3+g(z,u)y+h(z,u)$  & $w=(9,6,4,1)$ \\ \hline
			$x^2+y^3+3(\lambda zu^2+\mu u^6)y^2+g(z,u)y+h(z,u)$  & $w=(10,7,4,1)$ \\ \hline
			$x^2+y^3+g(z,u)y+h(z,u)$  &$w=(12,8,5,1)$ \\ \hline
			$x^2+y^3+g(z,u)y+h(z,u)$  & $w=(15,10,6,1)$ \\ \hline
			$\left\lbrace\begin{array}{l}x^2+y^3+l(z,u)t+g(z,u)y+h(z,u) \\ 2l'(z,u)x+g'(z,u)y+h'(z,u)-t\end{array}\right.$  &$w=(3,2,1,1,5)$ \\ \hline
			$x^2+2(l'(z,u)y+g'(z,u))x+y^3+g(z,u)y+h(z,u)$  & $w=(4,2,1,1)$ \\ \hline
			$\left\lbrace\begin{array}{l}x^2+y^3+q(z,u)t+g(z,u)y+h(z,u) \\ g'(z,u)y+h'(z,u)-t\end{array}\right.$  & $w=(3,2,1,1,4)$\\ \hline
			$x^2+y^3-3q(z,u)y^2+g(z,u)y+h(z,u)$  & $w=(3,3,1,1)$\\ \hline
			$\left\lbrace\begin{array}{l}x^2+yt+g(z,u)y+h(z,u) \\ y^2+3(\lambda zu+\mu u^3)y+g'(z,u)-t\end{array}\right.$  & $w=(5,3,2,1,7)$ \\ \hline
			\end{tabular}
			The weight of exceptional locus depends only on the type, one may take $d=30$ as an upper bound.
		\item $P$ is of type $cD/2$. By \cite{Hay3} Theorem 1.1, $X$ is an LCI locus in
			\[\A^4_{(x,y,z,u)}/\frac{1}{2}(1,1,1,0) \quad\mbox{or}\quad \A^5_{(x,y,z,u,t)}/\frac{1}{2}(1,1,1,0,0)\]
			and $Y$ is obtained by weighted blow-up as the following table.\par
			\begin{tabular}{|c|c|c|c|} \hline
			Defining equations & Blowing-up weight & Relation & Upper bound \\ \hline 
			$x^2+y^2u+s(z,u)yuz$ & $(2l,2l,1,1)$ & $8l\leq\Xi(Y)$ & $dep(Y)$ \\
			$+r(z)y+p(z,u)$ & & & \\ \hline
			$x^2+yzu+y^4+z^{2b}+u^c$ & $(2,2,1,1)$ & & $4$ \\ \hline
			$\left\lbrace\begin{array}{l}x^2+ut+r(z)y+p(z,u) \\ y^2+s(z,u)zx+q(z,u)-t\end{array}\right.$ & $(l+1,l,1,1,2l+1)$ & $4l+2\leq\Xi(Y)$ & $dep(Y)+1$ \\ \hline
			$\left\lbrace\begin{array}{l}x^2+yt+p(z,u) \\ yz+u^2-t\end{array}\right.$ & $(2l+2,2l,1,1,2l+2)$ & $8l+4\leq\Xi(Y)$ & $dep(Y)$ \\ \hline
			\end{tabular}			
		\item $P$ is of type $cE/2$. By \cite{Hay3} Theorem 1.2, $X$ is defined by \[(u^2+x^3+3\nu x^2z^2+p(x,y,z)=0)\subset\A^4_{(x,y,z,u)}/\frac{1}{2}(0,1,1,1)\] and $Y$ is obtained by
			weighted blow-up with weight $(3,2,1,4)$. The weight of exceptional locus is $6$.
		\end{enumerate}
	\item $f$ is of exceptional type as in \cite{Kaw2} and $a(X,E)=2$. We use the results in \cite{Kaw3}. We have that $X$ is isomorphic to
		\[(x^2+ut+p(y,z,u)=y^2+q(x,z,u)+t=0)\subset\A^5_{(x,y,z,u,t)}/\frac{1}{2}(1,1,1,0,0)\] and $Y$ is ontained by weighted blow-up of weight $((r+1)/2,(r-1)/2,2,1,r)$ and
		$2r\leq dep(Y)$. The weight of exceptional locus can be bounded by $dep(Y)$.
	\item $f$ is of exceptional type, $P$ is Gorenstein and discrepancy is greater than one. We use \cite{Yam}, Theorem 2.2-2.10.
		$X$ is a LCI locus in $\A^4$ or $\A^5$, $Y$ is weighted blow-up and all possible cases are listed below.\par
		\begin{tabular}{|c|c|c|c|} \hline
			Defining equations & Blowing-up weight & Relation & Upper bound \\ \hline 
			$\left\lbrace\begin{array}{l}x^2+\lambda yz^k+ut+p(z,u) \\ y^2+2xq(z,u)+r(z,u)+t\end{array}\right.$ & $(\frac{r+1}{2},\frac{r-1}{2},4,1,r)$ & $r\leq\Xi(Y)$ & $2dep(Y)$ \\ \hline 
			$\left\lbrace\begin{array}{l}x^2+\lambda yz^k+ut+p(z,u)\\ y^2+2xq(z,u)+r(z,u)+t\end{array}\right.$ & $(\frac{r+1}{2},\frac{r-1}{2},2,1,r)$ & $r\leq\Xi(Y)$ & $2dep(Y)$ \\ \hline 
			$x^2+y^2u+2yup(z,u)$ & $(r,r,2,1)$ & $2r\leq\Xi(Y)$ & $2dep(Y)$ \\ 
			$+\lambda yz^k+q(z,u)$ & & & \\ \hline
			$x^2+y^2u+2yup(z,u)$ & $(3,3,1,2)$ & & $6$ \\
			$+\lambda yz^k+q(z,u)$ & & & \\ \hline
			$x^2+(y-p(z,u))^3$ & $(3,3,2,1)$ & & $6$ \\
			$+yg(z,u)+h(z,u)$ & & & \\ \hline
			$x^2+y^2+2cxy+2xp(z,u)$ & $(4,3,2,1)$ & & $6$ \\
			$+2cyq(z,u)+z^3+g(z,u)$ & & & \\ \hline
			$x^2+y^2u+2yup(z,u)$ & $(3,4,2,1)$ & & $6$ \\ 
			$+\lambda yz^k+q(z,u)$ & & & \\ \hline
			$\left\lbrace\begin{array}{l}x^2+xt+p(z,u)\\ y^2+q(z,u)+t\end{array}\right.$ & $(5,3,2,2,7)$ & & $10$ \\ \hline 
			$x^2+y^3+\lambda y^2u^2$ & $(7,5,3,2)$ & & $14$ \\
			$+yg(z,u)+h(z,u)$ & & & \\ \hline
		\end{tabular}
\end{enumerate}
%We conclude that one can take $D'_{dep(Y)}$ to be \[\max\{D_{dep(Y)+1},M^3_{2dep(Y),1},M^3_{4dep(Y)-6},M^4_{4dep(Y)+2,2},M^3_{30,1},M^4_{10,2}\}+1.\]
\end{proof}

The following lemma treat the blow-up LCI curve case.

\begin{lem}\label{lci}
	Assume $f:Y\rightarrow X$ is blow up LCI curve $C$ on $X$, then \[\ctop{Y}-\ctop{X}=\ctop{C}.\]
\end{lem}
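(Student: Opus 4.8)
The plan is to reduce everything to two standard properties of the topological Euler characteristic over $\Cc$: \emph{additivity}, namely $\ctop{S}=\ctop{U}+\ctop{Z}$ whenever $Z\subset S$ is closed and $U=S-Z$; and \emph{multiplicativity} for Zariski-locally trivial fibrations, namely $\ctop{T}=\ctop{F}\cdot\ctop{B}$ when $T\rightarrow B$ is a fiber bundle with fiber $F$ that is locally trivial in the Zariski topology. Over $\Cc$ the ordinary Euler characteristic agrees with the compactly supported one, so both properties are available without restriction.

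First I would describe the exceptional divisor $E$ of $f$. Since $C$ is a local complete intersection curve in the threefold $X$, it is regularly embedded of codimension $2$, so the conormal sheaf $\mathcal{I}_C/\mathcal{I}_C^2$ is locally free of rank $2$ on $C$. Hence the graded algebra $\bigoplus_n \mathcal{I}_C^n/\mathcal{I}_C^{n+1}$ is the symmetric algebra of $\mathcal{I}_C/\mathcal{I}_C^2$, and the exceptional divisor $E=\mathrm{Proj}\left(\bigoplus_n \mathcal{I}_C^n/\mathcal{I}_C^{n+1}\right)$ is the projectivization of a rank-$2$ bundle, i.e. a $\Pp^1$-bundle over $C$. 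In particular $E\rightarrow C$ is Zariski-locally trivial with fiber $\Pp^1$, so multiplicativity gives $\ctop{E}=\ctop{\Pp^1}\cdot\ctop{C}=2\ctop{C}$.

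Next I would use that blowing up is an isomorphism away from the center. Writing $U=Y-E$ and $V=X-C$, the map $f$ restricts to an isomorphism $U\xrightarrow{\ \sim\ }V$, so $\ctop{U}=\ctop{V}$. Applying additivity to the decompositions $Y=E\sqcup U$ and $X=C\sqcup V$ and subtracting yields
\[\ctop{Y}-\ctop{X}=\big(\ctop{E}-\ctop{C}\big)+\big(\ctop{U}-\ctop{V}\big)=\ctop{E}-\ctop{C}.\]
Substituting $\ctop{E}=2\ctop{C}$ gives $\ctop{Y}-\ctop{X}=\ctop{C}$, as claimed.

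The only genuine subtlety, and the step I would be most careful to justify, is the assertion that $E\rightarrow C$ is an honest $\Pp^1$-bundle even though $C$ may be singular as an abstract curve. This is exactly where the LCI hypothesis is indispensable: a regular embedding forces the normal cone to coincide with the locally free normal bundle, so no thickened or exceptional fibers appear over the singular points of $C$, and the multiplicativity of $\ctop{\cdot}$ applies verbatim. Everything else is a purely formal manipulation of additivity and the birational isomorphism $U\cong V$.
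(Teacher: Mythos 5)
Your proof is correct and follows essentially the same route as the paper's: both reduce the statement to showing that the exceptional divisor $E$ is a $\Pp^1$-fibration over $C$, so that $\ctop{E}=2\ctop{C}$, and then conclude by additivity of $\chi_{top}$ applied to the decompositions $Y=E\sqcup(Y-E)$ and $X=C\sqcup(X-C)$ together with the isomorphism off the center. Your identification of $E$ as the projectivization of the rank-two conormal bundle of the regular embedding is a somewhat cleaner justification of the fibration structure than the paper's (which locally embeds $Y$ into $X\times\Pp^1$ via the two generators of the ideal and invokes ruledness of geometric ruled surfaces to get local triviality over a dense open subset of $C$), but the underlying idea is identical.
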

\begin{proof} At firsr one show that over any point of $C$, the fiber is a $\Pp^1$. To see that, assume $C$ is defined by the ideal $I$ and locally $I$ is generated by the $\alpha$ and $\beta$.
	Then  $Y$ is isomorphic to $Proj\bigoplus_{n\geq0}I^n$ and the natural map $\Oo_X[x,y]\rightarrow\bigoplus_{n\geq0}I^n$ defined by $x\mapsto\alpha$, $y\mapsto\beta$ gives an inclusion
	$Y\hookrightarrow X\times\Pp^1$. Hence all fiber along $C $ is a $\Pp^1$. Now there exists a open set $U\subset C$ such that $f^{-1}U\cong U\times\Pp^1$ since
	geometric ruled surface are ruled, hence if one denote $E$ to be the exceptional divisor of $f$, then \[\ctop{E}=\ctop{f^{-1}U}+\ctop{f^{-1}(C-U)}=2\ctop{U}+2\ctop{C-U}=2\ctop{C}\] and then
	\[\ctop{Y}-\ctop{X}=\ctop{E}-\ctop{C}=\ctop{C}.\]
\end{proof}
 
 Now let $X$ be a smooth threefold and consider the process of minimal model program $X=X_0\dashrightarrow X_1\dashrightarrow...\dashrightarrow X_m=X_{min}$.
 We will use above results to estimate the third Betti number of $X_i$.

\begin{pro}\label{b3}
	Let $X\rightarrow W$ be a divisorial contraction and $X\dashrightarrow X'$ be flip or flop. Then there is a constant $\Phi_{dep(X)}$ depends only on $dep(X)$ such that
	$b_3(W)\leq\Phi_{dep(X)}+b_3(X)$ and $b_3(X')\leq\Phi_{dep(X)}+b_3(X)$
\end{pro}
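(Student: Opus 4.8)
The plan is to reduce every comparison of third Betti numbers to a comparison of topological Euler characteristics via Corollary \ref{ctop}, and then to feed in the uniform bounds supplied by Proposition \ref{divm}, Proposition \ref{divX} and Lemma \ref{lci}. I would prove the two asserted inequalities together, by induction on $dep(X)$, the only genuinely recursive ingredient being the flip, which I unwind through the factorization of Theorem \ref{ch}. Throughout I take the constants $\Phi_j$ to be non-decreasing in $j$.

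For the divisorial contraction $X\to W$ the argument is direct. If the contracted locus is a point, Corollary \ref{ctop} gives $b_3(W)-b_3(X)=\ctop{X}-\ctop{W}-2$, and Proposition \ref{divX} bounds $|\ctop{X}-\ctop{W}|\leq D'_{dep(X)}$, so $b_3(W)\leq b_3(X)+D'_{dep(X)}+2$. If the contracted locus is a curve $C$, then $C$ is irreducible (being the image of the irreducible exceptional divisor) and, by Remark \ref{Gcurve}, $X\to W$ is the blow-up of an LCI curve; Lemma \ref{lci} then gives $\ctop{X}-\ctop{W}=\ctop{C}$, and since $\ctop{C}\leq 2$ for an irreducible curve, Corollary \ref{ctop} yields $b_3(W)-b_3(X)=\ctop{C}-2\leq 0$. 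The flop case is immediate, since $b_3$ is a flop invariant. Hence any $\Phi_{dep(X)}\geq D'_{dep(X)}+2$ settles the divisorial and flop statements.

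For a flip $X\dashrightarrow X'$ we necessarily have $dep(X)\geq 1$, so Theorem \ref{ch} applies to the flipping contraction and produces $f\colon Y\to X$ a $w$-morphism, $f'\colon Y'\to X'$ a divisorial contraction, and a chain $Y=Y_0\dashrightarrow\cdots\dashrightarrow Y_l=Y'$ of flips and flops. I would estimate $b_3(X')$ along $X\leftarrow Y\dashrightarrow Y'\to X'$. First, as $f$ is a $w$-morphism, Corollary \ref{ctop} together with Proposition \ref{divm} gives $b_3(Y)\leq b_3(X)+D_{dep(X)}+2$. Next, Remark \ref{rch} gives $dep(Y)=dep(X)-1$, and no $Y_i\dashrightarrow Y_{i+1}$ raises the depth (Proposition \ref{depflip}), so $dep(Y_i)\leq dep(X)-1$ for all $i$; flops preserve $b_3$, while for each flip the induction hypothesis gives $b_3(Y_{i+1})\leq\Phi_{dep(X)-1}+b_3(Y_i)$. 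Since each flip strictly lowers the depth and the depth is nonnegative, the chain contains at most $dep(X)-1$ flips, whence $b_3(Y')\leq b_3(Y)+(dep(X)-1)\Phi_{dep(X)-1}$. Finally $f'$ is a divisorial contraction with $dep(Y')\leq dep(X)-1$, so the divisorial case treated above (valid whether $f'$ contracts to a point or to a curve) gives $b_3(X')\leq b_3(Y')+D'_{dep(X)-1}+2$. Combining,
\[ b_3(X')\leq b_3(X)+D_{dep(X)}+(dep(X)-1)\Phi_{dep(X)-1}+D'_{dep(X)-1}+4, \]
which defines an admissible $\Phi_{dep(X)}$ recursively; the base case $dep(X)=0$ is vacuous for flips, since a Gorenstein threefold admits no flipping contraction by Remark \ref{Gcurve}.

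The hard part will not be any individual Euler-characteristic estimate but the termination of this induction: the chain $Y\dashrightarrow Y'$ coming from Theorem \ref{ch} has a priori unbounded length, and the accumulated error can only be controlled through depth bookkeeping. The crucial facts are that passing to $Y$ lowers the depth by exactly one (Remark \ref{rch}), that every flip lowers it further (Proposition \ref{depflip}), and that nonnegativity of the depth then caps the number of flips by $dep(X)-1$. I must therefore check that every modification entering the recursion---the $w$-morphism $f$, the intermediate flips $Y_i\dashrightarrow Y_{i+1}$, and the contraction $f'$---is measured against a depth strictly below $dep(X)$, so that $\Phi_{dep(X)-1}$, $D_{dep(X)}$ and $D'_{dep(X)-1}$ are all legitimately available and depend only on $dep(X)$.
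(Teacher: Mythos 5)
Your reduction to Euler characteristics, your treatment of contractions to points, flops, and the flip via Theorem \ref{ch} with depth bookkeeping all match the paper's argument. But there is a genuine gap in your handling of divisorial contractions to curves. You assert that any such contraction $X\rightarrow W$ is the blow-up of an LCI curve, citing Remark \ref{Gcurve}; that remark only makes this claim when $dep(X)=0$, i.e.\ when $X$ is Gorenstein (it rests on Cutkosky's classification of Gorenstein elementary contractions). For non-Gorenstein $X$ a divisorial contraction to a curve need not be a blow-up of an LCI curve, so Lemma \ref{lci} does not apply and your bound $b_3(W)\leq b_3(X)$ is unjustified in exactly the cases where the statement has content. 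This also infects your flip argument at the last step: the contraction $f'\colon Y'\rightarrow X'$ produced by Theorem \ref{ch} may itself be a divisorial contraction to a curve with $Y'$ non-Gorenstein, and your claimed estimate $b_3(X')\leq b_3(Y')+D'_{dep(X)-1}+2$ silently relies on the same false reduction.

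The paper closes this case by running the \emph{divisorial} version of Theorem \ref{ch}: for $X\rightarrow W$ a contraction to a curve with $X$ non-Gorenstein one gets the diagram with $f\colon Y\rightarrow X$ a $w$-morphism, $Y\dashrightarrow Y'$ flips and flops, $f'\colon Y'\rightarrow X'$ a divisorial contraction to a curve, and crucially $g'\colon X'\rightarrow W$ a divisorial contraction to a \emph{point}; one then estimates $b_3(X')$ exactly as in the flip case (by induction on depth, which legitimately covers $f'$ since $dep(Y')<dep(X)$) and finally passes from $X'$ to $W$ via Proposition \ref{divX}, giving $b_3(W)\leq D'_{dep(X')}+b_3(X')$. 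The base of the induction is the Gorenstein case, where Remark \ref{Gcurve} really does reduce curve contractions to LCI blow-ups and excludes flips. So your overall induction scheme is the right one, but the divisorial-to-curve case must be folded into the recursion rather than dispatched directly; as written, your argument is incomplete there.
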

\begin{proof}
	Assume $X\rightarrow W$ is a divisorial contraction to point, then by Corollary \ref{ctop} and Proposition \ref{divX} we have $|b_3(X)-b_3(W)|=|\ctop{X}-\ctop{W}-2|\leq D'_{dep(X)}+2$,
	hence \[b_3(W)\leq D'_{dep(X)}+2+b_3(X).\] If $X\rightarrow W$ is blow-up LCI curve on $W$, then using Corollary \ref{ctop} and Lemma \ref{lci} one has
	$b_3(W)-b_3(X)=\ctop{X}-\ctop{W}-2=\ctop{C}-2\leq0$, hence \[b_3(W)\leq b_3(X).\] If $X\dashrightarrow X'$ is a flop, then $b_3(X)=b_3(X')$. So we only have
	to consider the cases when $X\rightarrow W$ is a divisorial contraction to curve which is not blow-up LCI curve and $X\dashrightarrow X'$ is a flip.\par
	Note that when $X$ is Gorenstein, there is no flipping contraction and every divisorial contraction to curve is blowing-up LCI curve, as mentioned in
	Remark \ref{Gcurve}. Hence we may induction on the number $dep(X)$. By Theorem \ref{ch} we have the diagram
	\[\xymatrix{ Y \ar@{-->}[rr]\ar[d]^f & & Y' \ar[d]_{f'} \\ X\ar[rd]^g && X'\ar[ld]_{g'}	\\ & W &}.\]
	At first note that by Proposition \ref{BettiD} and Proposition \ref{divm}, \[|b_3(Y)-b_3(X)|=|\ctop{Y}-\ctop{X}|\leq D_{dep(X)},\] hence $b_3(Y)\leq D_{dep(X)}+b_3(X)$. On the other hand,
	one write \[Y=Y_0\dashrightarrow Y_1\dashrightarrow...\dashrightarrow Y_l=Y',\] by Remark \ref{rch} we have $Y_i\dashrightarrow Y_{i+1}$ is a flip
	for $i>0$, hence $dep(Y_{i+1})<dep(Y_i)$ for all $i>0$ and $dep(Y_0)<dep(X)$. Thus $l\leq dep(X)$. By induction hypothesis we have $b_3(Y_{i+1})<\Phi_{dep(Y_i)}+b_3(Y_i)$.
	Now define $\Psi^0_{dep(X)}=D_{dep(X)}$ and $\Psi^n_{dep(X)}=\Phi_{dep(X)-1}+\Psi^{n-1}_{dep(X)}$, then we have
	\[b_3(Y_0)=b_3(Y)\leq\Psi^0_{dep(X)}+b_3(X)\] and hence \[b_3(Y_{i+1})\leq\Phi_{dep(Y_i)}+b_3(Y_i)\leq\Phi_{dep(X)-1}+b_3(Y_i)=\Psi^{i+1}_{dep(X)}+b_3(X)\]
	by induction on $i$. We conclude that $b_3(Y')=b_3(Y_l)\leq\Psi^{dep(X)}_{dep(X)}+b_3(X)$. Finally
	\[b_3(X')\leq\Phi_{dep(Y')}+b_3(Y')\leq\Phi_{dep(X)-1}+\Psi^{dep(X)}_{dep(X)}+b_3(X)\] since $dep(Y')<dep(X)$. So we finish the case when $X\dashrightarrow X'$ is a flip.\par
	Now assume $X\rightarrow W$ is divisorial contraction to curve, then one has to estimate $b_3(W)$. In this case $g':X'\rightarrow W$ is divisorial contraction to point,
	hence one may apply Proposition \ref{divX} to get $|b_3(X')-b_3(W)|=|\ctop{X'}-\ctop{W}|\leq D'_{dep(X')}$ and then
	\[b_3(W)\leq D'_{dep(X')}+b_3(X')\leq D'_{dep(X)}+\Phi_{dep(X)-1}+\Psi^{dep(X)}_{dep(X)}+b_3(X).\]
	
\end{proof}

\begin{proof}[Proof of Theorem \ref{mthm}]
	$(i)$ and $(ii)$ are Proposition \ref{b2}. Also as in Remark \ref{deprho} we have $dep(X_i)\leq\rho(X)$ for all $i$. So Proposition \ref{b3} implies
	\[b_3(X_i)\leq\Phi_{\rho(X)}+b_3(X_{i-1})\leq i\Phi_{\rho(X)}+b_3(X).\] Now $i\leq2\rho(X)$ by \cite{CZ}, Lemma 3.1. One conclude that one can take
	$\bar{\Phi}_{\rho(X)}=2\rho(X)\Phi_{\rho(X)}$.
\end{proof}

\section{Examples and applications}
Let $Y\rightarrow X$ be a extremal divisorial contraction between terminal threefolds, then as Lemma \ref{BettiD} $b_i(Y)-b_i(X)$ are known except for $b_3$. In the previous section
we have shown that $|b_3(Y)-b_3(X)|$ can be bounded by some constant depend only on the depth of $X$ or $Y$. The following examples shows that the bound is truly depends on the
depth. When the depth being larger, the bound should be larger.
\begin{eg}
	Assume $P\in X$ is locally isomorphic to the origin in \[(x^2+y^2+z^{4k+2}+u^{2k+1}=0)\subset\A^4_{(x,y,z,u)}/\frac{1}{4}(1,3,1,2).\] This is a isolated terminal point of type $cAx/4$.
	Assume $k$ is even and let $Y$ be the weighted blow up of weight $\frac{1}{4}(2k+1,2k+3,1,2)$. Then $Y\rightarrow X$ is a extremal divisorial contraction with discrepancy $1/4$.
	Let $E$ be the exceptional divisor. We have \[b_3(X)-b_3(Y)=\ctop{Y}-\ctop{X}-2=\ctop{E}-3.\] Hence to compute $b_3(X)-b_3(Y)$ is equivalent to compute $\ctop{E}$.\par
	Now in this case \[E\cong(x^2+z^{4k+2}+u^{2k+1}=0)\subset\Pp(2k+1,2k+3,1,2).\] On $U_z=\{z=1\}$ we have $E|_{U_z}\cong(x^2+u^{2k+1}+1)\subset\A^3_{(x,y,u)}$.
	This is a line bundle over a smooth curve $C=(x^2+u^{2k+1}+1)\subset\A^2_{(x,u)}$ which is of degree $2k+1$, hence \[\ctop{E|_{U_z}}=\ctop{C}=-(2k-2)(2k+1)-(2k+1),\]
	which tends to $-\infty$ as	$k$ tends to $\infty$.\par
	On the other hand, one can show that $E|_{\{z=0\}}$ is isomorphic to $\Pp^1$. Hence $\ctop{E}$ tends to $-\infty$ when $k$ tends to $\infty$. This shows that $b_3(X)-b_3(Y)$ could be
	arbitrary negative.
\end{eg}
\begin{eg}
	Assume $P\in X$ is locally isomorphic to the origin in \[(xy+z^{mk}+u^k=0)\subset\A^4_{(x,y,z,u)}/\frac{1}{m}(\alpha,-\alpha,1,m)\] with $(\alpha,m)=1$.
	This is a isolated terminal point of type $cA/m$.
	Let $Y$ be the weighted blow up of weight $\frac{1}{m}(a,b,1,m)$ with $a\equiv\alpha$ mod $m$ and $a+b=mk$. Then $Y\rightarrow X$ is a extremal divisorial contraction with discrepancy $1/m$.
	The exceptional divisor $E$ is isomorphic to \[(xy+z^{mk}+u^k=0)\subset\Pp(a,b,1,m).\] On the affine open set $U_y=\{y=1\}$ we have
	$E|_{U_y}\cong(x+z^{mk}+u^k=0)\subset\A^3/\frac{1}{b}(a,1,m)$, which is isomorphic to $\A^2/\frac{1}{b}(1,m)$. One can compute that $\ctop{E|_{U_y}}=1$.\par
	Now let \[E'=E|_{\{y=0\}}\cong(z^{mk}+u^k=0)\subset\Pp(a,1,m).\] We have $E'|_{\{z=1\}}\cong(u^k+1=0)\subset\A^2_{(x,u)}$, which is $k$ lines. Also $E'|_{\{z=0\}}$ is a point,
	hence \[\ctop{E'}=k+1.\]\par
	A conclusion is that $\ctop{E}=k+2$ can be arbitrary large when $k$ growth to infinity, hence $b_3(X)-b_3(Y)$ could be arbitrary positive.
\end{eg}
In the rest part we will prove Theorem \ref{iBetti}. From now on let $X$ be a projective $\Q$-factorial terminal threefold over $\Cc$. For any singular point $P\in X$,
we say that there exists a \emph{feasible resolution} for $P$ if there is a sequence
\[X_n\rightarrow X_{n-1}\rightarrow...\rightarrow X_0=X\]
so that $X_n$ is smooth over $P$ and $X_i\rightarrow X_{i+1}$ is a extremal divisorial contraction to point with minimal discrepancy. 
\begin{thm}[\cite{C}, Theorem 2]
	Given a three-dimensional terminal singularity $P\in X$, there exists a feasible resolution for $P\in X$.
\end{thm}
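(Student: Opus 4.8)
The plan is to construct the resolution in two phases and to attach to $P\in X$ an invariant that I force to strictly drop at each step: first I would bring the germ to a Gorenstein point by a chain of $w$-morphisms, and then resolve the resulting Gorenstein point by minimal-discrepancy blow-ups of the singular point. In both phases every map produced is, by construction, an extremal divisorial contraction whose center is a single point and whose discrepancy is the smallest possible, so that the concatenation is exactly a feasible resolution in the sense of the definition.

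For the first phase, recall that a $w$-morphism is precisely an extremal divisorial contraction to a point of index $r>1$ with discrepancy $1/r$; since over a point of index $r$ every discrepancy lies in $\frac{1}{r}\Z_{>0}$, the value $1/r$ is the minimal one, so a $w$-morphism automatically has minimal discrepancy. By \cite{Hay2}, Theorem 1.2 the depth $dep(X)$ is finite, so by the very definition of depth there is a sequence of $w$-morphisms $X_{dep(X)}\to\cdots\to X_0=X$ whose top term is Gorenstein over $P$. This is already a feasible partial resolution. Equivalently, one may run an induction on $dep(X)$, using that $dep(X)=0$ exactly when the germ is Gorenstein (cf. Remark \ref{Gcurve}): a single $w$-morphism strictly decreases the depth, reducing us to the Gorenstein case.

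For the second phase, $X_{dep(X)}$ is terminal and Gorenstein over $P$, hence $P$ is an isolated compound Du Val point; in particular it is a double point, and a general hyperplane section through $P$ is a rational double point of type $A_n$, $D_n$ or $E_n$. Here the minimal discrepancy of a divisorial contraction to the point is $1$, and I would invoke the classification of divisorial contractions to Gorenstein points (Mori's analysis of the ordinary blow-up together with \cite{Kaw1}, \cite{Kaw2} and \cite{Yam}) to produce an extremal divisorial contraction of discrepancy $1$ centered at $P$. The crucial point is to isolate an invariant of the germ — for instance the Milnor number of the hypersurface singularity, or the length of the Du Val type of a general hyperplane section — that strictly decreases under such a contraction while the germ stays terminal, isolated and Gorenstein (so the phase remains inside the compound Du Val world). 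Since this invariant is a nonnegative integer, the process terminates, and termination at the bottom value means the germ has become smooth over $P$, completing the feasible resolution.

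The main obstacle is the second phase, where two facts must be verified simultaneously and neither is formal: that a minimal-discrepancy (discrepancy $1$) extremal divisorial contraction to the given point always exists, and that after it is performed the exceptional fibre again consists only of isolated compound Du Val points carrying a strictly smaller value of the chosen invariant. Both rest on the explicit classification of three-dimensional divisorial contractions, and the delicate part is to read that classification so that the selected contraction is at once extremal, centered at a point, of minimal discrepancy, and strictly improving; in particular one must exclude the possibility that the contraction merely redistributes the singularity among several new points without lowering the invariant. By contrast, the first phase is, up to bookkeeping, a direct consequence of the finiteness of $dep(X)$.
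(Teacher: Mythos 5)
First, note that the present paper does not prove this statement at all: it is quoted verbatim from \cite{C}, so your proposal can only be measured against the strategy of that reference. Your first phase is essentially right and is indeed how one starts: by \cite{Hay2} the depth is finite, a chain of $w$-morphisms of length $dep(X)$ lands on a Gorenstein germ, and a $w$-morphism over an index-$r$ point has discrepancy $1/r$, which is the minimal discrepancy over such a point, so these steps are admissible for a feasible resolution.

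The second phase, however, contains a genuine gap, and not only the one you acknowledge. You assert that after a discrepancy-one extremal divisorial contraction to an isolated cDV point the germ ``stays terminal, isolated and Gorenstein (so the phase remains inside the compound Du Val world)''. This is false in general: the discrepancy-one contractions to $cD$, $cE$, $cD/2$, $cE/2$ points classified by Hayakawa and Yamamoto --- the very tables reproduced in case (v) of the proof of Proposition \ref{divX} in this paper --- are weighted blow-ups with weights such as $(r,r-1,1,2)$ or $(r,r,1,1)$, and the ``Relation'' column ($r+3\le\Xi(Y)$, $2r\le\Xi(Y)$, and so on) records precisely that the blown-up variety acquires cyclic quotient points of index greater than one. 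Hence a minimal-discrepancy blow-up of a Gorenstein point generically throws you back into the non-Gorenstein world, your two phases cannot be run consecutively, and an invariant that only sees the Du Val type or the Milnor number of a hypersurface germ is not even defined at the new points. The actual argument of \cite{C} must therefore set up a single induction on an invariant (a generalized depth) that simultaneously controls non-Gorenstein points (attacked by $w$-morphisms) and cDV points (attacked by explicit discrepancy-one weighted blow-ups), and must verify its strict decrease through every entry of the classification in \cite{Hay1}--\cite{Hay5} and \cite{Yam}. Exhibiting that invariant and performing that case check is the substance of the theorem; your proposal names the difficulty but does not resolve it, and its structural premise for the second phase is incorrect.
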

\begin{cor}
	Given a projective $\Q$-factorial terminal threefold $X$ over $\Cc$, there is a smooth variety $Y$ such that $Y\rightarrow X$
	is a composition of steps of $K_Y$-minimal model program, and the relatively Picard number $\rho(Y/X)$ depends only on the singularity (that is, the local equation near singular points) of $X$.
\end{cor}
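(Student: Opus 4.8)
The plan is to carry out the feasible resolution of \cite{C} at every singular point of $X$ and to patch the pieces into one global morphism. Since a three-dimensional terminal singularity is isolated, $Sing(X)=\{P_1,\dots,P_s\}$ is a finite set. For each $P_j$, Theorem 2 of \cite{C} produces a feasible resolution of the germ $(X,P_j)$, that is, a chain of extremal divisorial contractions to points of minimal discrepancy terminating at a model smooth over $P_j$; let $n_j$ denote its length.

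Next I would globalize. Each step of a feasible resolution is an extremal divisorial contraction, hence a projective birational morphism that is an isomorphism away from the single point it contracts; it therefore extends to a projective morphism over $X$ altering $X$ only over a small neighborhood of $P_j$. Because $P_1,\dots,P_s$ are distinct isolated points, these local surgeries do not interact: resolving $P_1$ leaves a neighborhood of each $P_i$ with $i\neq1$ untouched, so the points can be resolved one after another. Composing all $\sum_j n_j$ contractions produces a birational morphism $\pi:Y\to X$ where $Y$ is smooth (every germ has been resolved) and projective ($\pi$ is a composition of projective morphisms over the projective base $X$).

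It remains to identify $\pi$ with an MMP and to compute $\rho(Y/X)$. Write the global composition as $Y=X_N\rightarrow X_{N-1}\rightarrow\cdots\rightarrow X_0=X$ with $N=\sum_j n_j$. By definition an extremal divisorial contraction $f:X_\ell\to X_{\ell-1}$ is $K$-negative: writing $K_{X_\ell}=f^{*}K_{X_{\ell-1}}+a(E,X_{\ell-1})E$ with $a(E,X_{\ell-1})>0$ since $X_{\ell-1}$ is terminal, and using $E\cdot C<0$ for a curve $C$ in the contracted fibre, one gets $K_{X_\ell}\cdot C<0$. Thus each step of $\pi$ is a $K$-negative extremal contraction, so $\pi$ is a composition of steps of a $K_Y$-minimal model program. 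Since an extremal contraction has relative Picard number one and relative Picard numbers are additive along a composition of birational morphisms between $\Q$-factorial varieties, $\rho(Y/X)=\sum_{\ell=1}^{N}\rho(X_\ell/X_{\ell-1})=\sum_{j=1}^{s}n_j$.

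Finally, each length $n_j$ is read off from the germ $(X,P_j)$ alone, because the construction of \cite{C} is local; hence $\rho(Y/X)=\sum_j n_j$ depends only on the local equations at the singular points. The delicate point, which I expect to be the main obstacle, is exactly this local-to-global interface: one must confirm that the feasible resolution is determined by the germ closely enough — in particular that its length $n_j$ is a genuine invariant of $(X,P_j)$ — and that performing it globally neither disturbs the other singular points nor alters the step count, so that the sum $\sum_j n_j$ is a well-defined invariant of the singularities of $X$.
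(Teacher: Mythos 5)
Your argument is correct and takes essentially the same approach the paper intends: the paper states this corollary without proof as an immediate consequence of \cite{C}, Theorem 2, and the globalization over the finitely many isolated singular points, the $K$-negativity of each extremal divisorial contraction (making $Y\rightarrow X$ a composition of MMP steps for $Y$), and the count $\rho(Y/X)=\sum_j n_j$ are exactly the omitted details. The worry you flag at the end is not a real obstacle, because the statement is existential: one fixes, for each germ, the feasible resolution produced by the explicit construction of \cite{C} from the local equation (the new singular points created by intermediate contractions are already absorbed into that chain), and then $\sum_j n_j$ is by construction a function of the local data alone.
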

\begin{cor}\label{the}
	Notation as above. we have $b_i(Y)\leq b_i(X)+\Theta_i$, where $\Theta_i$ is a constant depends only on singularities of $X$ and $\rho(X)$.
\end{cor}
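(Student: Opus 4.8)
The plan is to run the smooth model $Y\to X$ produced by the previous corollary and to track each Betti number step by step along the resolution. Write the feasible resolution as $Y=Z_0\to Z_1\to\cdots\to Z_n=X$, where, by the very definition of a feasible resolution, each $Z_j\to Z_{j+1}$ is an extremal divisorial contraction of a divisor to a point and no flips occur. Since every such contraction drops the Picard number by exactly one, the number of steps is $n=\rho(Y/X)$, which by the previous corollary depends only on the singularities of $X$. This integer will control all the constants $\Theta_i$.

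The cases $i\neq 3$ follow immediately from the birational behaviour recorded in Proposition \ref{b2}. For $i=0,1,5,6$ the Betti numbers are constant along the resolution, so $b_i(Y)=b_i(X)$ and one may take $\Theta_i=0$. For $i=2,4$ each contraction increases $b_i$ by one as one passes from $Z_{j+1}$ up to $Z_j$, whence $b_i(Y)=b_i(X)+n=b_i(X)+\rho(Y/X)$; thus $\Theta_2=\Theta_4=\rho(Y/X)$ works.

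The substantive case is $i=3$, where I telescope the difference and control each summand by Corollary \ref{ctop} together with Proposition \ref{divX}:
\[
b_3(Y)-b_3(X)=\sum_{j=0}^{n-1}\bigl(b_3(Z_j)-b_3(Z_{j+1})\bigr)=\sum_{j=0}^{n-1}\bigl(\ctop{Z_{j+1}}-\ctop{Z_j}+2\bigr)\le\sum_{j=0}^{n-1}\bigl(D'_{dep(Z_j)}+2\bigr),
\]
the last inequality being $\ctop{Z_{j+1}}-\ctop{Z_j}\le|\ctop{Z_j}-\ctop{Z_{j+1}}|\le D'_{dep(Z_j)}$ from Proposition \ref{divX}. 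It then remains to bound the depths $dep(Z_j)$ uniformly and to absorb the result into one constant.

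The key step—and the place I expect the real work to sit—is the uniform control of the depths of the intermediate models $Z_j$. Here I invoke Proposition \ref{depdiv}: for the contraction $Z_j\to Z_{j+1}$ it gives $dep(Z_j)\ge dep(Z_{j+1})-1$, so the depth can grow by at most one per step. Since $Z_0=Y$ is smooth, $dep(Z_0)=0$, and induction yields $dep(Z_j)\le j\le n=\rho(Y/X)$ for every $j$. Setting $\bar D=\max_{0\le d\le\rho(Y/X)}D'_d$, one obtains $b_3(Y)-b_3(X)\le n(\bar D+2)$, so $\Theta_3=\rho(Y/X)(\bar D+2)$ is admissible. All the constants $\Theta_i$ are thus governed by $\rho(Y/X)$, which by the previous corollary depends only on the singularities of $X$; this gives the dependence asserted in the statement. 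The only delicate point is the bookkeeping guaranteeing $dep(Z_j)\le\rho(Y/X)$; once Proposition \ref{depdiv} is applied step by step this is routine, and the remaining estimates are the already-established contraction formulas.
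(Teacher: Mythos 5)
Your proof is correct and follows essentially the same route as the paper: $i\neq 3$ via the behaviour of Betti numbers under divisorial contractions, and $i=3$ by telescoping along the feasible resolution and controlling each step with Corollary \ref{ctop} and Proposition \ref{divX}. The one place you diverge is the bound on the depths of the intermediate models: the paper invokes Remark \ref{deprho} to get $dep(X_j)\leq\rho(Y)=\rho(Y/X)+\rho(X)$, whereas you iterate Proposition \ref{depdiv} downward from the smooth top, where the depth is $0$, to get $dep(Z_j)\leq j\leq\rho(Y/X)$. Both are legitimate; your version is in fact marginally sharper, since it makes $\Theta_3$ depend only on $\rho(Y/X)$, hence only on the singularities of $X$ and not on $\rho(X)$, while the paper's appeal to Remark \ref{deprho} is the reason $\rho(X)$ appears in the statement at all. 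Your handling of the possible non-monotonicity of $D'_d$ by taking $\bar D=\max_{0\leq d\leq\rho(Y/X)}D'_d$ is a sensible precaution the paper leaves implicit.
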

\begin{proof}
	We apply Theorem \ref{mthm}.
	When $i=0,1,5,6$, one take $\Theta_i=0$. For $i=2,4$ we choose $\Theta_i$ to be $\rho(Y/X)$. Now assume $i=3$ and assume $Y\rightarrow X$
	factors through \[Y=X_n\rightarrow X_{n-1}\rightarrow...\rightarrow X_0=X\]
	with $X_i\rightarrow X_{i+1}$ is a extremal divisorial contraction to point. By Proposition \ref{divX} and Corollary \ref{ctop} we have
	\[|b_3(X_{i+1})-b_3(X_i)|\leq |\ctop{X_i}-\ctop{X_{i+1}}|+2\leq D'_{dep(X_{i+1})}+2.\]
	Now $n$ is equal to $\rho(Y/X)$ and $dep(X_{i+1})$ is bounded by $\rho(Y)=\rho(Y/X)+\rho(X)$ (Remark \ref{deprho}). Hence 
	\[|b_3(Y)-b_3(X)|\leq n(D'_{\rho(Y)}+2)\] is a constant depends on singularities of $X$ and $\rho(X)$.
\end{proof}
\begin{proof}[Proof of Theorem \ref{iBetti}]
	Let \[ Y=X_n\rightarrow X_{n-1}\rightarrow...\rightarrow X_0=X \] be a feasible resolution.
	By \cite{CT} Lemma 2.16 we have
	\[0\rightarrow IH^i(X_j,\Q)\rightarrow IH^i(X_{j+1},\Q)\oplus IH^i(P_j,\Q)\rightarrow IH^i(E_j,\Q)\rightarrow0\]
	is exact for $i\geq1$, here $E_j=exc(X_{j+1}\rightarrow X_j)$ and $P_j$ is the image of $E_j$. Hence $Ib_i(X_{j+1})\geq Ib_i(X_j)$ for all $j$.
	Thus $Ib_i(X)\leq Ib_i(Y)=b_i(Y)\leq b_i(X)+\Theta_i$ by Corollary \ref{the}.
\end{proof}

\end{document}